\theoremstyle{plain}
\newtheorem{theorem}{Theorem}[section]
\newtheorem{lemma}[theorem]{Lemma}
\newtheorem{corollary}[theorem]{Corollary}
\newtheorem{proposition}[theorem]{Proposition}
\newtheorem{assumption}{Assumption}
\newtheorem{definition}[theorem]{Definition}
\newtheorem{remark}[theorem]{Remark}
\numberwithin{equation}{section}
\numberwithin{figure}{section}
\numberwithin{table}{section}
\newcommand{\R}{\mathbb{R}}
\newcommand{\C}{\mathbb{C}}
\newcommand{\N}{\mathbb{N}}
\newcommand{\bigO}{\mathcal{O}}
\newcommand{\abs}[1]{\left|#1\right|}
\newcommand{\norm}[1]{\left\|#1\right\|}
\newcommand{\laplace}{\Delta}
\def\ii{i}
\newcommand{\XX}{\mathcal X}
\newcommand{\YY}{\mathcal Y}
\newcommand{\tracejump}[1]{\llbracket \gamma #1 \rrbracket}
\newcommand{\normaltracejump}[1]{\llbracket \gamma_{\nu} #1\rrbracket}
\newcommand{\normalDjump}[1]{\llbracket \partial_{\nu} #1\rrbracket}
\newcommand{\bs}[1]{\boldsymbol #1}
\newcommand{\ones}{\mathbf{1}}
\newcommand{\dom}{\operatorname{dom}}
\DeclarePairedDelimiter\floor{\lfloor}{\rfloor}
\def\rkA{\mathcal{Q}}
\def\rkb{\mathbf{b}}
\def\rkc{\mathbf{c}}
\def\lifting{\mathscr{E}}
\def\dd{\partial^k}
\newcommand{\Cpspace}[2][\mathcal{X}_{\mu}]{\mathcal{C}^{#2}\left(\left[0,T\right],#1\right)}
\newcommand{\ztrafo}[1]{\mathscr{Z}\left[#1\right]}
\newcommand{\ltrafo}{\mathscr{L}}
\def\AA{A}
\def\AAstar{\AA_{\star}}
\def\id{I}
\renewcommand{\Re}{\operatorname{Re}}
\newcommand{\includeTikzOrEps}[1]{\tikzexternalenable \tikzsetnextfilename{#1}  {\include{figures/#1}} \tikzexternaldisable}
\newcommand{\includeTikzOrEps}[1]{\includegraphics{figures_pdf/#1}}
\title{ Runge-Kutta approximation for $C_0$-semigroups in the graph norm with applications to time domain boundary integral equations  }
\author{Alexander Rieder, Francisco-Javier Sayas, Jens Markus Melenk }
\date{\today}
\begin{document}

\maketitle
\abstract{We consider the approximation to an abstract evolution problem with inhomogeneous side constraint using $A$-stable Runge-Kutta methods. 
  We derive a priori estimates in norms other than the underlying Banach space.
  Most notably, we derive estimates
    in the graph norm of the generator.
  These results are used to study
convolution quadrature based discretizations of a wave scattering and a heat conduction problem.}

\section{Introduction}
Many time dependent partial differential equations can be conveniently described in the language of strongly continuous semigroups.
In this language, these initial boundary value problems resemble systems of ordinary differential equations, which suggests that they are amendable
to the standard discretization schemes of multistep or Runge-Kutta type. Unlike the ODE case, one needs to pay special attention to the
boundary conditions imposed by the generator of the semigroup. This, in most cases, leads to a reduction of order phenomenon, meaning
that the convergence rates are (mainly) determined by the stage order of the Runge-Kutta method instead of the classical order.
The a priori convergence of Runge-Kutta methods for semigroups has been extensively studied in the literature. Starting with the
early works~\cite{brenner_thomee_rat_approx_semigroup,crouzeix_rk_approx_evolution}, it has been established that
conditions of the form $u(t) \in \dom(\AA^{\mu})$, where $\AA$ is the generator of the semigroup, determine the convergence rates.
In~\cite{ostermann_roche}, this has been generalized to the case of non-integer $\mu\geq 1$ using the theory of interpolation spaces.
Finally, in~\cite{mallo_palencia_optimal_orders_rk}, the case of $\mu \in [0,1]$ was adressed, which is
the case needed for PDEs with  inhomogeneous boundary conditions.
All of these works focus on establishing convergence rates with respect to the norm of the underlying Banach space.
In many applications one needs to establish convergence with respect to other norms, for example, in order to
be able to bound boundary traces of the solution. Most notably, one might be interested in convergence of $\AAstar u$,
where $\AAstar$ is an extension of the generator that disregards boundary conditions. 
If $u$ is assumed to be in $\dom(\AA)$, we get $\AAstar u=\AA u$ and the convergence result can be easily established by using the fact that
the time-evolution commutes with the generator of the underlying semigroup (both in the continuous and discrete settings).
If the boundary conditions are inhomogeneous, such a strategy cannot be pursued. It is the goal of this
paper to establish convergence results for $\AAstar u$ also for the case $u(t) \in \dom(A^{\mu})$ for $\mu \in [0,1]$,
again using the theory of interpolation spaces.

Similarly it is sometimes useful to compute discrete integrals of the time evolution, by reusing the same Runge-Kutta method. Also in this case, we
establish rigorous convergence rates.

Our interest in such estimates originally arose from the study of time domain boundary integral equations
and their discretization using convolution quadrature(CQ). It has already been noticed in the early works (see e.g.~\cite{lubich_ostermann_rk_cq}) that
such discretizations have a strong relation to the Runge-Kutta approximation of the underlying semigroup.
This approach of studying TDBIEs in a strictly time-domain way has recently garnered a lot of interest, see~\cite{sayas_hassel_fembem,banjai_laliena_sayas_kirchhoff_formulas,sayas_new_analysis}
and the monograph~\cite{sayas_book}, as it potentially allows sharper bounds than the more standard Laplace domain based approach. Similar techniques have even been extended to the case of certain nonlinear problems in~\cite{banjai_and_me}.
This paper can be seen as our latest addition to this effort.
While the convergence rates provided by the Laplace-domain approach in~\cite{BanLM} and the results in this current paper are
  essentially the same, the present new approach provides better insight into the dependence on the
  end-time of the computation. It also fits more naturally with the time-domain analysis
of the continuous problem and space discretization, as for example presented in~\cite{sayas_new_analysis}.

The paper is structured as follows. Section~\ref{sect:setting} introduces the abstract setting and fixes notation, most notably for working with Runge-Kutta methods.
Section~\ref{sect:error_estimates} then contains the main estimates. Starting by summarizing known results from~\cite{mallo_palencia_optimal_orders_rk}
in Section~\ref{sect:summary_of_AMP}, we then  formulate the main new results of this article in Section~\ref{sect:new_results}.
After proving some preparatory Lemmas
related to Runge-Kutta methods  
  in Section~\ref{sect:computations} and~\ref{sect:some_lemmas}, we then provide
the proofs of the main estimates in Section~\ref{sect:proofs}.
In Section~\ref{sect:applications}, to showcase how
the theory developed in this paper is useful for this class of problems, we consider a simple exterior scattering problem in Section~\ref{sec:9}
and a heat transmission problem in Section~\ref{sect:heat}.
We note that Section~\ref{sec:9} showcases the need for the bound on the discrete integral
  of the result, whereas Section~\ref{sect:heat} was chosen because, in order to bound
  the main quantity of interest on the boundary, we need to apply a trace theorem. This
necessitates the use of the graph norm estimate.

\section{Problem setting}
\label{sect:setting}

We start by fixing the general setting used for the rest of the paper, first with respect to the equation to be solved and then with respect to its discretization. 

\subsection{Operator equation, functional calculus, and Sobolev towers}

\begin{assumption}
  \label{ass:1.1}
  We are given:
  \begin{itemize}
  	\item[(a)] a closed linear operator $\AAstar:\operatorname{dom}\AAstar\subset  \mathcal X \to \mathcal X$ in
  		 a Banach space $\mathcal{X}$,
  	\item[(b)] and a bounded linear operator $B: \operatorname{dom} \AAstar \to \mathcal{M}$.
  \end{itemize}
We assume that $A:=\AAstar|_{\ker{B}}$ generates a $C_0$-semigroup and
  that $B$ admits a bounded right inverse, denoted by $\lifting$, such that $\mathrm{range}\,\lifting \subset \ker (I-\AAstar)$, where $I:\mathcal X\to \mathcal X$ is the identity operator.
\end{assumption}

We are given $u_0\in \operatorname{dom} \AA$ and data functions $F \in C^1([0,T], \mathcal{X})$, $\Xi \in C^1([0,T],\mathcal{M})$, and we consider the problem: find $u \in C^1([0,T],\mathcal{X})$ such that
\begin{subequations}
\label{eq:1.1}
\begin{alignat}{6}  
  \dot{u}(t) &= \AAstar u(t) + F(t), & \qquad t>0, \\
  B u(t)&=\Xi(t), & \qquad t> 0, \\
  u(0) &=u_0.
\end{alignat}
\end{subequations}
For conditions on the well-posedness of this 
problem, see \cite{sayas_new_analysis}.
We start by recalling the following consequence of the Hille-Yosida theorem.

\begin{proposition}\label{prop:2.1}
  If $A$ is the generator of a $C_0$-semigroup on a Banach space $\mathcal{X}$, then
  there exist constants $\omega \geq 0$ and $M\geq 1 $ such that
  the spectrum $\sigma(A)$ of $A$ satisfies
  $\sigma(A) \subseteq \{z \in \C: \mathrm{Re}\,z \leq \omega\}$ 
  and the resolvent satisfies the estimates
  \begin{align}
    \label{eq:residual_estimate_semigroups}
    \norm{\left(A-z I\right)^{-1}}_{\mathcal{X} \to \mathcal{X}}\leq \frac{M}{ \mathrm{Re}\,z - \omega}
    	\qquad \forall z \mbox{ s.t. } \mathrm{Re}\,z>\omega.
  \end{align}
\end{proposition}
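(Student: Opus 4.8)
The plan is to invoke the standard generation theory for $C_0$-semigroups and to realize the resolvent as the Laplace transform of the semigroup. Let $T(t)$ denote the $C_0$-semigroup generated by $A$. The first step is to record the exponential growth bound: there exist constants $M \geq 1$ and $\omega \geq 0$ with $\norm{T(t)}_{\mathcal{X} \to \mathcal{X}} \leq M e^{\omega t}$ for all $t \geq 0$. This is a consequence of strong continuity together with the uniform boundedness principle, which bounds $\norm{T(t)}$ on a small interval $[0,t_0]$, and the semigroup law $T(n t_0 + s) = T(t_0)^n T(s)$, which propagates this bound to all $t \geq 0$ at the cost of exponential growth. These are precisely the constants that will appear in the statement.

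Second, for every $z \in \C$ with $\mathrm{Re}\, z > \omega$ I would define the operator
\begin{equation}
  R(z) x := \int_0^\infty e^{-zt} T(t) x \, dt, \qquad x \in \mathcal{X},
\end{equation}
as a strong (Bochner) integral. The growth bound gives $\norm{e^{-zt} T(t) x} \leq M e^{-(\mathrm{Re}\, z - \omega) t} \norm{x}$, so the integral converges absolutely and immediately yields the estimate
\begin{equation}
  \norm{R(z)}_{\mathcal{X} \to \mathcal{X}} \leq \int_0^\infty M e^{-(\mathrm{Re}\, z - \omega) t} \, dt = \frac{M}{\mathrm{Re}\, z - \omega}.
\end{equation}

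Third, the crucial step is to identify $R(z)$ with the resolvent $(zI - A)^{-1}$, i.e.\ to show $R(z) x \in \dom A$ with $(zI - A) R(z) x = x$ for all $x \in \mathcal{X}$, and $R(z)(zI - A) x = x$ for all $x \in \dom A$. The first identity follows from the definition of $A$ as the strong derivative of $T(t)$ at $t = 0$: forming the difference quotient $h^{-1}(T(h) - I) R(z) x$ and applying the change of variables $t \mapsto t + h$ under the integral, one telescopes the semigroup relation and lets $h \to 0^+$ to obtain $A R(z) x = z R(z) x - x$; the second identity uses $\frac{d}{dt} T(t) x = T(t) A x$ on $\dom A$. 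This is the part I expect to require the most care, since it hinges on interchanging the (closed, unbounded) generator with the integral and on the telescoping manipulation of the semigroup property.

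Finally, since $R(z) = (zI - A)^{-1}$ is a bounded operator for every $z$ with $\mathrm{Re}\, z > \omega$, each such $z$ lies in the resolvent set $\rho(A)$, which gives the spectral inclusion $\sigma(A) \subseteq \{z \in \C : \mathrm{Re}\, z \leq \omega\}$. The claimed resolvent bound \eqref{eq:residual_estimate_semigroups} then follows from the estimate above upon noting that $\norm{(A - zI)^{-1}} = \norm{(zI - A)^{-1}}$, the two differing only by an overall sign.
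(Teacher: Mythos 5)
Your proposal is correct and follows exactly the route the paper indicates: the paper's proof simply cites \cite[Corollary 3.6]{pazy} for the case $\omega=0$ and notes that the general case follows from the integral representation of the resolvent, which is precisely the Laplace-transform argument you carry out in full (growth bound $\norm{T(t)}\leq Me^{\omega t}$, Bochner integral $R(z)x=\int_0^\infty e^{-zt}T(t)x\,dt$, identification with $(zI-A)^{-1}$, and the resulting bound and spectral inclusion). You have merely written out the details that the paper delegates to the reference.
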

\begin{proof}
The case $\omega=0$ is shown in \cite[Corollary 3.6]{pazy}. The more general case follows as usual by considering the shifted semigroup $e^{\omega t} e^{A t}$ or directly from the integral representation of the resolvent.
\end{proof}

When working with Runge-Kutta methods, 
it is useful to use a calculus that allows us to apply rational functions to (unbounded) operators, as long as the poles of the function
are compatible with the spectrum of the operator.

\begin{definition}[Rational functions of operators]
\label{def:2.3}
Let $q$ be a rational function that is bounded at infinity, $\Lambda$ be the set of poles of $q$, which we can write in the form (note that we allow for some of the factors in the numerator to be constant)
\[
q(z)=c_0 \prod_{i=1}^n \frac{c_i z-1}{z-\lambda_i}
	=c_0\prod_{i=1}^n \left(c_i+\frac{c_i\lambda_i-1}{z-\lambda_i}\right). 
\] 
If $A:\dom A\subset \mathcal X\to \mathcal X$ is a linear operator such that $\sigma(A)\cap \Lambda=\emptyset$, we define
\begin{equation}\label{eq:2.4}
q(A):= c_0 (c_1\id+(c_1\lambda_1-1)(A-\lambda_1 \id)^{-1})
		\ldots
	   (c_n\id+(c_n\lambda_n-1)(A-\lambda_n \id)^{-1}).	
         \end{equation}
       \end{definition}
It is easy to see that different reorderings of the factors in the numerator and denominator of $q$ produce the same result and that each factor in the definition of $q(A)$ is a bounded linear operator in $\mathcal X$ since $\lambda_i\not\in \sigma(A).$ The  bounded linear operator $q(A):\mathcal X \to \mathcal X$ satisfies
\begin{equation}\label{eq:2.3}
  \| q(A)\|_{\mathcal X \to \mathcal X} \le
  C_q \Big(1+ \big( \max_{\lambda\in \Lambda}\| (A-\lambda \id)^{-1}\|_{\mathcal X\to \mathcal X}
\big)^n\Big).
\end{equation}


The error estimates of this paper use the theory of interpolation spaces. For Banach spaces $\mathcal{X}_1 \subset \mathcal{X}_0$ with continuous embedding and $\mu \in (0,1)$,
we define the space $[\mathcal{X}_0,\mathcal{X}_1]_{\mu,\infty}$ using real interpolation with the following norm:
\begin{align}
  \label{eq:definition_interpolation_norm}
  \norm{u}_{[\mathcal{X}_0,\mathcal{X}_1]_{\mu,\infty}}
  &:=\operatorname{ess\,sup}_{t>0}{\left(t^{-\mu} \inf_{v \in \mathcal{X}_1} \left[\norm{u-v}_{\mathcal{X}_0} + t \norm{v}_{\mathcal{X}_1}\right]\right)}.
\end{align}
We will not go into details of the definitions, they can be found in \cite{triebel95,tartar07} or \cite[Appendix B]{mclean}.
For simplicity of notation we often drop the second parameter $\infty$ and just write
$[\mathcal{X}_0,\mathcal{X}_1]_{\mu}$.

The most important property is the following: a bounded linear operator $T: \mathcal{X}_0 \to \mathcal{Y}_0$ and $\mathcal{X}_1 \to \mathcal{Y}_1$
with $\XX_1 \subseteq \XX_0$ and $\YY_1 \subseteq \YY_0$
is also a bounded operator mapping
$[\mathcal{X}_0,\mathcal{X}_1]_{\mu} \to [\mathcal{Y}_0,\mathcal{Y}_1]_{\mu}$ with the following norm bound
\begin{align}
  \label{eq:interpolation_thm_est}
  \norm{T}_{[\mathcal{X}_0,\mathcal{X}_1]_{\mu} \to [\mathcal{Y}_0,\mathcal{Y}_1]_{\mu}}
  \leq   \norm{T}_{\mathcal{X}_0 \to \mathcal{Y}_0}^{1-\mu} \norm{T}_{\mathcal{X}_1 \to \mathcal{Y}_1}^{\mu}.
\end{align}
We also note that for $\mu_1 \leq \mu_2$, the spaces are nested, i.e.,
$[\mathcal{X}_0,\mathcal{X}_1]_{\mu_2} \subseteq [\mathcal{X}_0,\mathcal{X}_1]_{\mu_1}$ with a continuous embedding. For notational convenience
we write $[\mathcal{X}_0,\mathcal{X}_1]_{0}:=\mathcal{X}_0$ and $[\mathcal{X}_0,\mathcal{X}_1]_{1}:=\mathcal{X}_1$. We will be interested in a collection of spaces defined by interpolating the domains of the powers of the operator $A$. The details of this construction can be found, for example in \cite{engel_nagel}.

\begin{definition}[Sobolev towers]
  \label{def:2.4}
  Let $\AA$ be a closed operator on a Banach space $\mathcal{X}$. For $\mu \in \N_0$, we define the following spaces
  $\mathcal{X}_0:=\operatorname{dom} \AA^0:=\mathcal{X}$ and $\mathcal{X}_{\mu}:=\operatorname{dom}\AA^{\mu}$, equipped with the following norm
  $$\norm{u}_{\mathcal{X}_{\mu}}:=\sum_{j=0}^{\mu}{\norm{\AA^j u}_{\mathcal{X}}}.$$
  For $\mu \in [0,\infty)$, we define  $\mathcal{X}_{\mu}:=\left[\mathcal{X}_{\floor{\mu}}, \mathcal{X}_{\floor{\mu}+1}\right]_{\mu-\floor{\mu}}$
  by interpolation.
\end{definition}

\subsection{Runge-Kutta approximation and discrete stage derivative}

An $m$-stage Runge-Kutta method is given by its Butcher tableau,
characterized by $\rkA \in \R^{m\times m}$ and $\rkb$, $\rkc \in \R^{m}$.
The Runge-Kutta approximation of the Problem~\eqref{eq:1.1} starts at $u^k_0:=u_0$ and then computes for $n\ge 0$ the stage vector $U^k_n \in \mathcal X^m$ and the step approximation $u^k_{n+1}\in \mathcal X$ by solving
\begin{subequations}
\label{eq:1.2}
\begin{align}
  U^k_n &= \ones  \, u^k_n+ k (\rkA \otimes \AAstar) U^k_n + k \rkA F(t_n + k \,\rkc),  \label{eq:1.2a}\\
  (\mathcal I\otimes B ) U^k_n&=\Xi\left(t_n + k \,\rkc\right), \label{eq:1.2b}\\
  u^k_{n+1} &= u^k_n + k (\rkb^\top \otimes \AAstar) U^k_n + k \rkb^\top  F(t_n + k \,\rkc). \label{eq:1.2c} 
\end{align}
\end{subequations}
We have used the following notation (the spaces $\mathcal Y$ and $\mathcal Z$ are generic):
\begin{itemize}
	\item[(a)] if $G:[0,T]\to \mathcal Y$, then
		\[
			G(t_n+k\mathbf c):=
				\left( \begin{array}{ccc} G(t_n+k c_1), & \ldots & 
				G(t_n+ k c_m)\end{array}\right)^\top \in \mathcal Y^m,
		\]
	\item[(b)] if $\mathcal S\in \mathbb R^{m\times m}$ and $C:\mathcal Y\to \mathcal Z$,
		\[
			\mathcal S\otimes C:=
				\left[\begin{array}{ccc} 
					\mathcal S_{11} C & \cdots & \mathcal S_{1m} C \\
					\vdots & & \vdots \\
					\mathcal S_{m1} C & \cdots & \mathcal S_{mm} C
				\end{array}\right] : \mathcal Y^m \to \mathcal Z^m,
		\]
	\item[(c)] if $C:\mathcal Y \to \mathcal Z$,
		\[
			\rkb^\top \otimes C :=
				\left[\begin{array}{ccc} b_1 C & \cdots & b_m C \end{array}\right] : \mathcal Y^m \to \mathcal Z,
		\]
	\item[(d)] $\mathcal I$ is the $m\times m$ identity matrix, 
	and $\mathbf 1=(1,\cdots,1)^\top$,
	\item[(e)] we admit shortened expressions such as
		\begin{alignat*}{6}
		\rkA F(t_n+k\rkc)&:=(\rkA\otimes I) F(t_n+k\rkc),\\
		\ones\, u  &:= (\ones\otimes I) u,\\
		\rkb^\top F(t_n+k\rkc)&:=(\rkb^\top\otimes I) F(t_n+k\rkc).
		\end{alignat*}
\end{itemize}

The following lemma involving inversion of matrices of operators associated to an operator
can be proved by taking the Jordan canonical form of the matrix $\mathcal S$.  
\begin{lemma}
\label{lemma:2.2}
If $A:\dom A\subset \mathcal X\to \mathcal X$ is a linear operator on a Banach space $\mathcal{X}$ and $\mathcal S\in \C^{m\times m}$ satisfies $\sigma(A) \cap \sigma(\mathcal S) = \emptyset$, then
\[
   \mathcal I\otimes A - \mathcal S\otimes I : (\dom A)^m \to \mathcal X^m,
\]
is invertible. Furthermore, there exists a constant $C_{\mathcal S}$, depending only on $\mathcal S$, such that
\[
\| (\mathcal I\otimes A - \mathcal S\otimes I)^{-1}\|_{\mathcal X^m\to \mathcal X^m}
	\le C_{\mathcal S} \,\left[1+\max_{\mu \in \sigma(\mathcal S)} \| (A-\mu \,I)^{-1}\|_{\mathcal X\to \mathcal X}\right]^m.
\]
\end{lemma}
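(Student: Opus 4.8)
The plan is to follow the hint and strip away the matrix structure of $\mathcal{S}$ by passing to its Jordan canonical form. Write $\mathcal{S}=PJP^{-1}$ with $J$ in Jordan normal form and $P\in\C^{m\times m}$ invertible. Since $P\otimes\id$ acts only on the ``matrix slot'' while $\mathcal{I}\otimes A$ acts only on the ``operator slot'', the two commute on $(\dom A)^m$ (and $P\otimes\id$ preserves $(\dom A)^m$), which gives the conjugation identity
\[
 \mathcal{I}\otimes A-\mathcal{S}\otimes\id=(P\otimes\id)\bigl(\mathcal{I}\otimes A-J\otimes\id\bigr)(P^{-1}\otimes\id).
\]
Because $P\otimes\id$ and $P^{-1}\otimes\id$ are bounded isomorphisms of $\mathcal{X}^m$ whose norms depend only on $\mathcal{S}$, it suffices to invert $\mathcal{I}\otimes A-J\otimes\id$ and to track its norm; the factors coming from $P$ and $P^{-1}$ are absorbed into $C_{\mathcal{S}}$.

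Next I would reduce to a single Jordan block. As $J$ is block diagonal, so is $\mathcal{I}\otimes A-J\otimes\id$, and its inverse is the block-diagonal assembly of the individual block inverses, with norm the maximum over blocks. For one block $J_\mu=\mu\,\mathcal{I}_\ell+N$ of size $\ell$, with $N$ the nilpotent superdiagonal, I write
\[
 \mathcal{I}_\ell\otimes A-J_\mu\otimes\id=\bigl(\mathcal{I}_\ell\otimes(A-\mu\,\id)\bigr)-N\otimes\id.
\]
The hypothesis $\sigma(A)\cap\sigma(\mathcal{S})=\emptyset$ forces $\mu\notin\sigma(A)$, so $A-\mu\,\id$ is boundedly invertible and I may factor out $\mathcal{I}_\ell\otimes(A-\mu\,\id)$; the remaining factor is $\mathcal{I}-R$ with $R:=N\otimes(A-\mu\,\id)^{-1}$. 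Since $N$ and the resolvent live in different slots, $R^\ell=N^\ell\otimes(A-\mu\,\id)^{-\ell}=0$, the Neumann series terminates, and
\[
 \bigl(\mathcal{I}_\ell\otimes A-J_\mu\otimes\id\bigr)^{-1}=\sum_{j=0}^{\ell-1}N^j\otimes(A-\mu\,\id)^{-(j+1)},
\]
which manifestly maps $\mathcal{X}^\ell$ into $(\dom A)^\ell$, establishing invertibility.

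For the norm bound I estimate each summand by $\norm{N^j}\,\norm{(A-\mu\,\id)^{-1}}^{j+1}$, so the block inverse is controlled by a finite sum of powers of $\norm{(A-\mu\,\id)^{-1}}$ with exponent at most $\ell\le m$. Bounding this sum by $C_\ell\bigl(1+\norm{(A-\mu\,\id)^{-1}}\bigr)^{\ell}\le C_\ell\bigl(1+\max_{\mu\in\sigma(\mathcal{S})}\norm{(A-\mu\,\id)^{-1}}\bigr)^{m}$ (using $\ell\le m$ and that the base is $\ge1$), then taking the maximum over the finitely many blocks, yields the claimed estimate; the combinatorial constants together with $\norm{P\otimes\id}\,\norm{P^{-1}\otimes\id}$ combine into $C_{\mathcal{S}}$.

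Once the decomposition is in place the steps are routine; the only points needing care are the commutation of $\mathcal{I}\otimes A$ with $P\otimes\id$ at the level of the unbounded domain $(\dom A)^m$, and the bookkeeping ensuring the exponent in the final bound never exceeds $m$. Neither is a genuine obstacle: the former holds because $P\otimes\id$ preserves $(\dom A)^m$ and commutes slotwise with $\mathcal{I}\otimes A$, and the latter follows at once from $\sum_{\text{blocks}}\ell=m$, so that each single block already satisfies $\ell\le m$.
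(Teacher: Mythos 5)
Your proof is correct and follows exactly the route the paper indicates: the paper gives no written proof beyond the remark that the lemma ``can be proved by taking the Jordan canonical form of the matrix $\mathcal S$,'' and your argument (conjugation by $P\otimes\id$, reduction to a single Jordan block, terminating Neumann series via nilpotency, and the resulting explicit inverse $\sum_{j=0}^{\ell-1}N^j\otimes(A-\mu\,\id)^{-(j+1)}$) is the standard way to carry that hint out. The norm bookkeeping, including why the exponent stays at most $m$, is also handled correctly.
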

      
Under Assumption \ref{ass:1.1}, the internal stage computation in the RK method can be decomposed in the following form:
\begin{subequations}\label{eq:1.3}
\begin{alignat}{6}
Y^k_n &:= (\mathcal I\otimes \mathscr E) \Xi(t_n+k\,\mathbf c),\\
\label{eq:1.3b}
Z^k_n-k(\mathcal Q\otimes A)Z^k_n
		&=\ones u^k_n-Y^k_n+k \rkA (Y^k_n+F(t_n+k\,\mathbf c)),\\
U^k_n &:=Y^k_n+Z^k_n.
\end{alignat}
\end{subequations}
In \eqref{eq:1.3b} we look for $Z^k_n\in (\mathrm{dom}\,A)^m$.

The stability function of the Runge-Kutta method is the rational function
$r(z):=1+z\rkb^\top(I-z\rkA)^{-1}\ones$.  We will not consider the full class of Runge-Kutta methods, but will restrict our considerations to those satisfying the following Assumptions:
\begin{assumption}
\label{ass:1.2}
\begin{enumerate}[(i)]
  \item \label{ass:rkA_is_invertible} The matrix $\rkA$ is invertible
  \item \label{ass:rk_is_a_stable} The stability function $r$ does not have poles in $\{ z\,:\, \mathrm{Re}\,z<0\}$, and $\abs{r(\ii t)}\leq 1$ for all $t \in \R$ (i.e., the method is $A$-stable). Equivalently, $|r(z)|<1$ for all $z$ with negative real part.
\end{enumerate}
\noindent 
\end{assumption}
We note that Assumption~\ref{ass:1.2}~(\ref{ass:rkA_is_invertible})
implies that the following limit exists
  \[
  \lim_{z\to\infty} r(z)=1-\rkb^\top\rkA^{-1}\ones=:r(\infty).
\]
Assumption~\ref{ass:1.2}~(\ref{ass:rk_is_a_stable}) implies that
\[
\sigma(\mathcal Q)\subset \mathbb C_+:=\{ z\in \mathbb C\,:\, \mathrm{Re}\,z>0\},
\]
and that $r$ is a rational function with poles only in $\mathbb C_+$ and bounded at infinity.

The computation of the internal stages in the numerical approximation \eqref{eq:1.2} requires the inversion of
\[
\mathcal I\otimes \id-k (\rkA\otimes A)
=(\rkA\otimes \id)(\rkA^{-1}\otimes \id-\mathcal I\otimes (k\,A)),
\]
as can be seen from the equivalent form \eqref{eq:1.3}.

If $A$ is the infinitesimal generator of a $C_0$-semigroup, $\omega$ and $M$ are given by Proposition \ref{prop:2.1}, and we choose (recall that $\sigma(\rkA)\subset \C_+$)
\begin{equation}\label{eq:2.22}
k_0< \omega^{-1} d_0, \qquad d_0:=\min\{ \mathrm{Re}\,\lambda\,:\, \lambda\in \sigma(\rkA^{-1})\},
\end{equation}
then the RK method can be applied for any $k\le k_0$. By Proposition \ref{prop:2.1} and Lemma \ref{lemma:2.2}, it follows that
\begin{equation}\label{eq:2.23}
\| (\mathcal I\otimes \id-k (\rkA\otimes A))^{-1}\|_{\mathcal X^m \to \mathcal X^m}
\le C_{\rkA} \frac{M}{d_0-k_0\omega}, \qquad \forall k\le k_0.
\end{equation}

Using Definition \ref{def:2.3}, we can define $r(k\, A)$ for an RK method satisfying Assumption \ref{ass:1.2} and $k\le k_0$ satisfying \eqref{eq:2.22}. We then define
\begin{align}
\label{eq:def_rho_k}
  \rho_k(T):=\sup_{0\leq n k\leq T} \norm{r(kA)^{n}}_{\mathcal{X} \to \mathcal{X}}.
\end{align}
This quantity is relevant for the  study of the error propagation in the Runge-Kutta method.

Given an RK method, we consider the following matrix-valued rational function
\begin{equation} 
\label{eq:rk_def_delta}
    \delta(z):= \left( \rkA - \frac{z}{1-z} \ones \rkb^\top\right)^{-1}
    	= \rkA^{-1}-\frac{z}{1-r(\infty)z}\rkA^{-1}\ones\rkb^\top\rkA^{-1}.
\end{equation}
(The verification that these two formulas correspond to the same matrix is
simple by using the Sherman-Morrison-Woodbury formula.) This matrix is related to the discrete differentiation process associated to an RK method satisfying Assumption \ref{ass:1.2}: on the one hand $k^{-1}\delta(z)$ is the discrete symbol associated to the Discrete Operational Calculus built with the RK method \cite{lubich_ostermann_rk_cq}; on the other hand, a direct interpretation of this symbol is possible using $Z$-transforms (see \cite[Section 6]{HS2016}). Given a sequence $U:=\{ U_n\}$ (tagged from $n\ge 0$) on a space, its $Z$-transform is the formal series
\[
\widehat U(z):=\sum_{n=0}^\infty U_n z^n.
\]
For a detailed treatment on formal power series, see~\cite{henrici_complex_analysis}.

\begin{definition}
Let $U:=\{ U_n\}$ and $V:=\{ V_n\}$ be two sequences  in $\XX^m$ and let $\widehat U$ and $\widehat V$ be their respective $Z$-transforms. If
\[
k^{-1} \delta(z)\widehat U(z)=\widehat V(z),
\]
we write
\[
\partial^k U=V, \qquad U=(\partial^k)^{-1}V.
\]
\end{definition}

The above definition is consistent with the RK discrete operational calculus of Lubich and Ostermann,
see Section~\ref{sect:bem_and_cq} and \cite{lubich_ostermann_rk_cq}.
We now show an explicit form of the computation of $\partial^k$ and its inverse.
\begin{lemma}\label{lemma:2.6}
If $U=\{U_n\}$ is a sequence in $\mathcal X^m$, then $X:=(\partial^k)^{-1}U$ can be computed with the recurrence
\begin{equation}\label{eq:2.dder}
x_0:=0, \qquad 
	\begin{array}{l} 
		X_n:=\ones x_n+k\rkA U_n,\\
		x_{n+1}:=x_n+k\rkb^\top U_n
				=r(\infty) x_n+\rkb^\top\rkA^{-1} X_n,
	\end{array}
\end{equation}
and $V:=\partial^k U$ can be computed with the inverse recurrence
\begin{equation}\label{eq:2.dantider}
u_0:=0, \qquad 
	\begin{array}{l} 
		V_n:=k^{-1}\rkA^{-1} (U_n-\ones u_n),\\
		u_{n+1}:=u_n+k\rkb^\top V_n=r(\infty) u_n+\rkb^\top\rkA^{-1}U_n.
	\end{array}
\end{equation}
\end{lemma}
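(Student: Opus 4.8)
The plan is to verify both recurrences by taking $Z$-transforms, which turns each operator recursion into an algebraic identity between generating functions that can be matched against the defining relation $k^{-1}\delta(z)\widehat{U}(z)=\widehat{V}(z)$. The one fact used repeatedly is that, because $x_0=u_0=0$, the forward-shifted sequence $\{W_{n+1}\}_{n\ge 0}$ has $Z$-transform $z^{-1}\widehat{W}(z)$. I would first record that the two displayed forms of each scalar update agree: inserting $X_n=\ones x_n+k\rkA U_n$ into $\rkb^\top\rkA^{-1}X_n$ and using $r(\infty)=1-\rkb^\top\rkA^{-1}\ones$ gives $r(\infty)x_n+\rkb^\top\rkA^{-1}X_n=x_n+k\rkb^\top U_n$, and the analogous substitution with $V_n=k^{-1}\rkA^{-1}(U_n-\ones u_n)$ settles the update for $u_{n+1}$.

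For \eqref{eq:2.dantider}, I would take the $Z$-transform of $u_{n+1}=r(\infty)u_n+\rkb^\top\rkA^{-1}U_n$; with $u_0=0$ this reads $z^{-1}\widehat{u}(z)=r(\infty)\widehat{u}(z)+\rkb^\top\rkA^{-1}\widehat{U}(z)$, whence $\widehat{u}(z)=\tfrac{z}{1-r(\infty)z}\rkb^\top\rkA^{-1}\widehat{U}(z)$. Substituting into $\widehat{V}(z)=k^{-1}\rkA^{-1}\bigl(\widehat{U}(z)-\ones\widehat{u}(z)\bigr)$ produces
\[
  \widehat{V}(z)=k^{-1}\Bigl(\rkA^{-1}-\tfrac{z}{1-r(\infty)z}\rkA^{-1}\ones\rkb^\top\rkA^{-1}\Bigr)\widehat{U}(z)=k^{-1}\delta(z)\widehat{U}(z),
\]
the last equality being exactly the second representation of $\delta$ in \eqref{eq:rk_def_delta}. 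By definition this is $V=\dd U$.

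For \eqref{eq:2.dder}, the $Z$-transform of $x_{n+1}=x_n+k\rkb^\top U_n$ with $x_0=0$ gives $\widehat{x}(z)=\tfrac{kz}{1-z}\rkb^\top\widehat{U}(z)$, and inserting this into $\widehat{X}(z)=\ones\widehat{x}(z)+k\rkA\widehat{U}(z)$ yields $\widehat{X}(z)=k\bigl(\rkA+\tfrac{z}{1-z}\ones\rkb^\top\bigr)\widehat{U}(z)$. The remaining task is to recognize the bracket as $\delta(z)^{-1}$: applying the Sherman--Morrison formula to the second representation of $\delta$ in \eqref{eq:rk_def_delta} gives $\delta(z)^{-1}=\rkA+\tfrac{z}{1-z}\ones\rkb^\top$, so $\widehat{X}(z)=k\,\delta(z)^{-1}\widehat{U}(z)$, i.e. $k^{-1}\delta(z)\widehat{X}(z)=\widehat{U}(z)$, which is precisely $X=(\dd)^{-1}U$. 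Alternatively, and avoiding any reinversion of $\delta$, one can feed the output $X$ of \eqref{eq:2.dder} into \eqref{eq:2.dantider} and check by a one-line induction (using $\tilde u_0=x_0=0$) that it returns the original $U$; since \eqref{eq:2.dantider} computes $\dd$, this says $\dd X=U$.

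I expect the only genuinely non-clerical point to be the Sherman--Morrison identification of $\delta(z)^{-1}$ in the last step (or, equivalently, recognizing that the two recurrences are mutually inverse); everything else is the coefficientwise algebra of formal power series, which is legitimate since $\tfrac{1}{1-z}$ and $\tfrac{1}{1-r(\infty)z}$ expand as formal series and the shift identities only use $x_0=u_0=0$. No convergence of the $Z$-transforms is needed, as all identities hold coefficientwise in $\XX$.
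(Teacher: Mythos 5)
Your proof is correct and takes essentially the same route as the paper, which dismisses both identities as ``a simple exercise in $Z$-transforms'' and derives \eqref{eq:2.dantider} from \eqref{eq:2.dder} by inverting the recurrence; you simply carry out that exercise in full for both directions. One remark: your Sherman--Morrison computation yields $\delta(z)^{-1}=\rkA+\tfrac{z}{1-z}\ones\rkb^\top$, which appears to contradict the first expression in \eqref{eq:rk_def_delta} (written there with a minus sign) --- but you are right and that expression contains a sign typo, since only the plus sign is consistent with the second expression $\rkA^{-1}-\tfrac{z}{1-r(\infty)z}\rkA^{-1}\ones\rkb^\top\rkA^{-1}$, which is the form actually used throughout the paper (e.g.\ in Proposition~\ref{prop:3.1}).
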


\begin{proof}
The proof of \eqref{eq:2.dder}  is a simple exercise in $Z$-transforms, while \eqref{eq:2.dantider} follows from \eqref{eq:2.dder} by writing $U_n$ in terms of $X_n$ (and changing names to the sequences).\end{proof}  
Note that the first result of Lemma~\ref{lemma:2.6} says that if we apply the RK method to the equation
\[
\dot x(t)=u(t), \quad x(0)=0, 
	\qquad \mbox{i.e.}, \qquad x(t)=\int_0^t u(\tau)\mathrm d\tau,
\]
and $X:=\{ X_n\}$ is the sequence of vectors of internal stages, then $X=(\partial^k)^{-1} U$, where $U_n:=u(t_n+k\mathbf c)$.

Finally we note that we call a Runge-Kutta method stiffly accurate, if it satisfies
$\rkb^\top \rkA^{-1}=\mathbf e_m^\top:=(0,\dots,0,1)$. 
Stiffly accurate methods satisfy (we use that $\rkA\mathbf 1=\rkc$, see~(\ref{eq:subeqs:stage_order_condition}))
\begin{equation}\label{eq:1.4}
c_m=\rkb^\top\rkA^{-1}\rkc=\rkb^\top\rkA^{-1}\rkA\mathbf 1=
\rkb^\top\mathbf 1=1,
\end{equation}
and $r(\infty)=0.$ 
For stiffly accurate methods, the computation of the discrete stage derivative of a vector of stage of samples of a continuous function is particularly simple:

\begin{lemma}\label{lemma:2.7}
For stiffly accurate RK methods, the sequence
$G:=\partial^k F$ with $F_n=F(t_n+k\rkc)$, satisfies
\[
G_n=k^{-1}\rkA^{-1}(F(t_n+k\rkc)-\ones F(t_n)).
\] 
\end{lemma}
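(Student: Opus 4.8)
The plan is to obtain the claim directly from the explicit inverse recurrence \eqref{eq:2.dantider} of Lemma~\ref{lemma:2.6}, specialized to a stiffly accurate method. Setting $G := \partial^k F$ with $U_n := F_n = F(t_n + k\rkc)$, that recurrence computes
\[
G_n = k^{-1}\rkA^{-1}\bigl(U_n - \ones\, u_n\bigr), \qquad u_0 = 0, \qquad u_{n+1} = r(\infty)\, u_n + \rkb^\top \rkA^{-1} U_n,
\]
so the whole problem reduces to showing that the auxiliary sequence $\{u_n\}$ reproduces the nodal values $F(t_n)$.

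First I would use the two defining properties of stiff accuracy. Since $\rkb^\top\rkA^{-1} = \mathbf{e}_m^\top$, one has $r(\infty) = 1 - \rkb^\top\rkA^{-1}\ones = 0$, so the update becomes memoryless:
\[
u_{n+1} = \mathbf{e}_m^\top U_n = \mathbf{e}_m^\top F(t_n + k\rkc) = F(t_n + k c_m).
\]
Then I would invoke $c_m = 1$ from \eqref{eq:1.4} to rewrite the right-hand side as $F(t_n + k) = F(t_{n+1})$. This immediately gives $u_n = F(t_n)$, and substituting into the formula for $G_n$ reproduces exactly $G_n = k^{-1}\rkA^{-1}(F(t_n + k\rkc) - \ones F(t_n))$.

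The only point requiring care --- and the sole genuine obstacle --- is the first index. The recurrence is initialized with $u_0 = 0$, whereas matching the claimed formula at $n = 0$ would require $u_0 = F(t_0)$. Because $r(\infty) = 0$ erases all memory, the identity $u_n = F(t_n)$ is nevertheless restored for every $n \ge 1$ (indeed $u_{n+1}$ depends only on $U_n$, not on $u_n$), so the stated formula holds from $n = 1$ on with no hypothesis; at $n = 0$ the two expressions differ only by $k^{-1}\rkA^{-1}\ones F(0)$, which vanishes under the causal convention $F \equiv 0$ for $t \le 0$ (or whenever $F(0) = 0$). As a cross-check I would note the same conclusion follows from the $Z$-transform symbol, which for a stiffly accurate method collapses to $\delta(z) = \rkA^{-1} - z\,\rkA^{-1}\ones\,\mathbf{e}_m^\top$, and reading off the coefficient of $z^n$ gives the result directly.
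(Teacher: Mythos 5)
Your proof is correct and follows essentially the same route as the paper: the paper works directly with the simplified symbol $\delta(z)=\rkA^{-1}-z\,\rkA^{-1}\ones\mathbf e_m^\top$ and reads off the coefficient of $z^n$ using $c_m=1$, which is precisely the computation that your recurrence from Lemma~\ref{lemma:2.6} unfolds in the time domain (and which you yourself note as a cross-check). Your explicit treatment of the index $n=0$ is in fact more careful than the paper's proof, which silently relies on the causal convention $F(t_0)=0$ satisfied by the data in all applications.
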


\begin{proof}
For stiffly accurate methods we have $r(\infty)=0$ and therefore
\[
\delta(z)=\rkA^{-1}-z \rkA^{-1} \ones\rkb^\top\rkA^{-1}
=\rkA^{-1} -z \rkA^{-1} \ones \mathbf e_m^\top.
\]
However, since $c_m=1$, we have $\mathbf e_m^\top F(t_{n-1}+ k\rkc)=F(t_{n-1}+k c_m)=F(t_n),$
which proves the result.
\end{proof}

We also make the following optional assumption, which allows to increase the convergence order in some cases.
\begin{assumption}  
   \label{ass:1.3} For all $t\in \R, t \neq 0$ the stability function satisfies $\abs{r(\ii t)}< 1$ and $r(\infty)<1$.
\end{assumption}


\section{Error estimates}
\label{sect:error_estimates}
We are now in a position to formulate the main results of this article and put them into context with previous
results, most notably from~\cite{mallo_palencia_optimal_orders_rk}.

To simplify notation, we will write for $v\in \mathcal C([0,T];\mathcal X_\mu)$ with $\mu \ge 0,$
\[
\| v\|_{T,\mu}:=\max_{\tau\in [0,T]} \| v(\tau)\|_{\mathcal X_\mu}.
\]
For
functions $f:[0,T]\to \mathcal Y$, we will write $(\partial^{-1} f)(t):=\int_0^t f(\tau)\mathrm d\tau$,  where $\mathcal{Y}$ denotes a generic Banach space.

\subsection{The estimates of Alonso-Mallo and Palencia}
\label{sect:summary_of_AMP}
The following two results summarize the results of Alonso-Mallo and Palencia \cite{mallo_palencia_optimal_orders_rk}, rewritten with the notation of the present paper. The `proofs' which we provide clarify how notation needs to be adapted and how the hypotheses of the main results of \cite{mallo_palencia_optimal_orders_rk} are satisfied in our context.  

\begin{proposition}[{\cite[Theorem 1]{mallo_palencia_optimal_orders_rk}}]
\label{prop:AMP1}
  Let Assumption~\ref{ass:1.1} hold and
  assume that the exact solution $u$ satisfies $u \in \Cpspace{p+1}$ for some $\mu \geq 0$.
  Let $\{ u^k_n\}$ denote the Runge-Kutta approximation from~\eqref{eq:1.2}.
  There exist constants $k_0 > 0$, $C>0$, such that for $0<k\leq k_0$ and $0<  n k\leq T$ the following estimate holds:
  \begin{align}
    \label{eq:rk_approx_error_est}
    \| u(t_n) - u^k_n\|_{\mathcal{X}}
    &\leq C T \rho_{k}(T) k^{\min\{q+\mu,p\}} \Big(\sum_{\ell=q+1}^{p}
      \| u^{(\ell)}\|_{T,\mu}  + \| u^{(p+1)}\|_{T,0}\Big) .
  \end{align}
  The constant $C$ depends on the Runge-Kutta method,
  $\mu$, and the constants $M$ and $\omega$ from \eqref{eq:residual_estimate_semigroups}.
  The constant $k_0$ depends only on  $\omega$ and the Runge-Kutta method. 
\end{proposition}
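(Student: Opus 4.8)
The plan is to deduce the estimate directly from \cite[Theorem 1]{mallo_palencia_optimal_orders_rk}; the work lies entirely in verifying that Assumptions~\ref{ass:1.1} and~\ref{ass:1.2} supply exactly the structural hypotheses on the operator and the method that the cited theorem requires, and in translating their notation into ours. First I would isolate the generator $A:=\AAstar|_{\ker B}$, which by Assumption~\ref{ass:1.1} generates a $C_0$-semigroup, so that Proposition~\ref{prop:2.1} furnishes the resolvent bound \eqref{eq:residual_estimate_semigroups} in the half-plane $\Re z>\omega$. This resolvent estimate is the sole spectral input needed in \cite{mallo_palencia_optimal_orders_rk}; together with Assumption~\ref{ass:1.2}~(\ref{ass:rkA_is_invertible}) and~(\ref{ass:rk_is_a_stable}) it guarantees, via Lemma~\ref{lemma:2.2} and \eqref{eq:2.23}, that the stage system is solvable for $k\le k_0$ and that $r(kA)$ is well defined with powers controlled by $\rho_k(T)$ from \eqref{eq:def_rho_k}.

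Next I would recast the scheme \eqref{eq:1.2} into the homogeneous-boundary form analysed in the reference. Using the splitting \eqref{eq:1.3}, write $U^k_n=Y^k_n+Z^k_n$ with $Y^k_n:=(\mathcal I\otimes\lifting)\Xi(t_n+k\rkc)$ carrying the inhomogeneous datum through the lifting $\lifting$, and $Z^k_n\in(\dom A)^m$ solving the reduced stage equation \eqref{eq:1.3b} driven only by $A$ and a smoothed forcing. Performing the analogous continuous decomposition of the exact solution $u$ turns the inhomogeneous problem \eqref{eq:1.1} into an initial value problem for the generator $A$ whose right-hand side inherits the regularity of $F$ and $\Xi$; this is precisely the situation for which the convergence theorem is stated, so the inhomogeneity $\Xi$ no longer obstructs the argument once it is absorbed into the data.

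With this reduction in place, the heart of the proof is the standard error-propagation argument. I would derive the one-step error recursion $e_{n+1}=r(kA)\,e_n+\Theta_n$, where $e_n:=u(t_n)-u^k_n$ and $\Theta_n$ collects the stage and step consistency defects obtained by inserting the exact solution into \eqref{eq:1.2}. Solving the recursion and using $\|r(kA)^j\|_{\mathcal X\to\mathcal X}\le\rho_k(T)$ for $jk\le T$ gives $\|e_n\|_{\mathcal X}\le\rho_k(T)\sum_{j}\|\Theta_j\|_{\mathcal X}$, which accounts for the factors $T$ and $\rho_k(T)$ in \eqref{eq:rk_approx_error_est}. The defects are then estimated by Taylor expansion: the stage-order conditions annihilate the contributions up to the stage order $q$, and the remaining stage defect, though only of order $q$ when measured crudely, gains an extra factor $k^{\mu}$ because it acts on $u^{(\ell)}(t)\in\mathcal X_\mu$ and is smoothed by the resolvent; interpolating between the integer cases via \eqref{eq:interpolation_thm_est} yields the rate $k^{q+\mu}$ weighted by the $\mathcal X_\mu$-norms of $u^{(\ell)}$, $\ell=q+1,\dots,p$, while the step defect contributes the classical rate $k^{p}$ weighted by the $\mathcal X$-norm of $u^{(p+1)}$. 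The capped exponent $\min\{q+\mu,p\}$ is simply the smaller of these two mechanisms.

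The main obstacle I anticipate is not the bookkeeping but the sharp consistency estimate that upgrades a stage-order-$q$ defect into order $q+\mu$ without losing powers of $k$. This is the delicate part of \cite{mallo_palencia_optimal_orders_rk}, resting on a summation-by-parts and resolvent-calculus argument tailored to $A$-stable methods, and the task here is to confirm that it carries over verbatim under the resolvent bound \eqref{eq:residual_estimate_semigroups} and the interpolation scale of Definition~\ref{def:2.4}. Once that mechanism is granted, assembling the constants — which depend on the method, on $\mu$, and on $M,\omega$, exactly as claimed — is routine.
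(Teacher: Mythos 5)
Your proposal is correct and takes essentially the same route as the paper: Proposition~\ref{prop:AMP1} is proved there simply by invoking \cite[Theorem 1]{mallo_palencia_optimal_orders_rk} and checking that the hypotheses and notation translate (resolvent bound from Proposition~\ref{prop:2.1}, solvability of the stage system via \eqref{eq:2.23}, replacement of the restriction $\mu\le p-q$ by the capped rate $\min\{q+\mu,p\}$ using nestedness, and the weakened $\mathcal X_0$-norm on $u^{(p+1)}$ justified by inspection of the consistency estimate, cf.\ Lemma~\ref{lemma:5.8}). Your additional sketch of the internal error-recursion and consistency machinery matches what the cited proof actually does, so nothing is missing.
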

\begin{proof}
  We only make  remark on the differences in notation. A different
  definition of interpolation spaces is given in \cite{mallo_palencia_optimal_orders_rk}, but the proof only relies on estimates of the form~\eqref{eq:interpolation_thm_est}.
  The choice of $k_0$ follows from the fact that it is only needed to ensure that $(I - k\,\rkA \otimes A)$ is invertible, see \eqref{eq:2.23}. The assumption $\mu \leq p-q$ in \cite[Theorem 1]{mallo_palencia_optimal_orders_rk} can be replaced by using the rate $\min\{p,q+\mu\}$ in \eqref{eq:rk_approx_error_est}
  as the spaces $\mathcal{X}_{\mu} \subseteq \mathcal{X}_{p-q}$ are nested for $\mu \geq p-q$.
  We also lowered the regularity requirements on the highest derivative compared to their
    stated result. The fact that this holds true follows from inspection of the proof. Compare
    also to Lemma~\ref{lemma:5.8} for the key ingredient.
\end{proof}

For a subset of Runge-Kutta methods, these estimates can be improved:

\begin{proposition}[{\cite[Theorem 2]{mallo_palencia_optimal_orders_rk}}]
\label{prop:AMP2}
  Let the assumptions of Proposition~\ref{prop:AMP1} hold and assume that, in addition,
  the RK-method satisfies Assumption~\ref{ass:1.3}.
  There exist constants $k_0 > 0$, $C>0$ such that for $0<k\leq k_0$ and $0<  n k\leq T$ the following improved estimate holds:
  \begin{align}
    \label{eq:rk_approx_error_est_improved}
    \| u(t_n) - u^k_n\|_{\mathcal{X}}
    &\leq C (1+T) \rho_{k}(T) k^{\min\{ q+ \mu+1,p\}} \sum_{\ell=q+1}^{p+1} 
    \|u^{(\ell)}\|_{T,\mu}.
  \end{align}
  The constant $C$ depends on the Runge-Kutta method,
  $\mu$, and the constants $M$ and $\omega$ from \eqref{eq:residual_estimate_semigroups}; $k_0$ depends only on the  constant $\omega$ and the Runge-Kutta method. 
\end{proposition}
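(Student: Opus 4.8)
The plan is to treat the statement, exactly as was done for Proposition~\ref{prop:AMP1}, as a transcription of \cite[Theorem 2]{mallo_palencia_optimal_orders_rk} into the notation of the present paper. Accordingly, the work is not to reprove the estimate from scratch but to verify that every hypothesis of that reference is met in our setting and to isolate the single additional ingredient, namely Assumption~\ref{ass:1.3}, that is responsible for the improved convergence order.

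First I would carry over verbatim the reconciliations already invoked in the proof of Proposition~\ref{prop:AMP1}. The differing definition of the interpolation spaces in \cite{mallo_palencia_optimal_orders_rk} is immaterial, since only the interpolation bound~\eqref{eq:interpolation_thm_est} enters the argument; the admissible step size $k_0$ is dictated solely by the invertibility of $\mathcal I\otimes\id-k(\rkA\otimes A)$, which is quantified in~\eqref{eq:2.23} and rests on the resolvent estimate~\eqref{eq:residual_estimate_semigroups} of Proposition~\ref{prop:2.1}; and the hypothesis $\mu\le p-q$ present in the reference may be discarded by using the nesting $\mathcal X_\mu\subseteq\mathcal X_{p-q}$ for $\mu\ge p-q$ and writing the exponent as $\min\{q+\mu+1,p\}$. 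The genuinely new point is to locate where the analysis of \cite{mallo_palencia_optimal_orders_rk} gains one power of $k$ relative to Proposition~\ref{prop:AMP1}. This improvement comes from a discrete summation-by-parts applied to the propagated local defects: writing the accumulated error as a discrete convolution $\sum_{j} r(kA)^{\,n-1-j}\,k\,d_j$ of the stability operator against the defect sequence, one rewrites the defects as discrete differences and transfers one difference onto the factor $r(kA)^{\,n-1-j}$, producing $(\id-r(kA))\,r(kA)^{\,n-1-j}$. I would then verify that the resulting sum is summable uniformly in $n$ precisely because of the \emph{strict} damping $|r(\ii t)|<1$ for $t\ne0$ together with $r(\infty)<1$, i.e.\ exactly the content of Assumption~\ref{ass:1.3}. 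The same strict contractivity is what upgrades the prefactor from $T$ in~\eqref{eq:rk_approx_error_est} to $1+T$ in~\eqref{eq:rk_approx_error_est_improved}, since the geometric-type sum becomes uniformly bounded rather than merely $\mathcal O(T)$.

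The main obstacle I anticipate is making the role of Assumption~\ref{ass:1.3} fully precise: one must confirm that the strong-stability condition employed in \cite[Theorem 2]{mallo_palencia_optimal_orders_rk} to extract the extra factor of $k$ is equivalent to our Assumption~\ref{ass:1.3}, and not to some strictly stronger requirement, so that the order really improves to $\min\{q+\mu+1,p\}$ under our hypotheses alone. Once this correspondence is established, the dependence of the constants on the Runge-Kutta data, on $\mu$, and on $M$ and $\omega$ is read off exactly as in Proposition~\ref{prop:AMP1}, and the claimed estimate~\eqref{eq:rk_approx_error_est_improved} follows.
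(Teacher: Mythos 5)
Your proposal is correct and follows essentially the same route as the paper: both treat the statement as a transcription of \cite[Theorem 2]{mallo_palencia_optimal_orders_rk}, carry over verbatim the notational reconciliations from Proposition~\ref{prop:AMP1}, and identify Assumption~\ref{ass:1.3} as exactly what supplies the reference's strong-stability hypotheses (the paper makes this concrete by noting that $r(\infty)\neq 1$ puts one in the case $m=0$ of the reference and that $|r(\ii t)|<1$ for $t\neq 0$ forces the nonzero zeros of $r(z)-1$ to satisfy $\mathrm{Re}\,z>k\omega$, yielding the uniform resolvent bound on the set $Z_{\alpha,\delta}$). The only minor slip is that the restriction to be removed for Theorem 2 is $\mu\le p-q-1$ rather than $\mu\le p-q$, so the nesting argument should target $\mathcal X_{p-q-1}$; this does not affect the conclusion.
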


\begin{proof}
  Again, this is just a reformulation of \cite[Theorem 2]{mallo_palencia_optimal_orders_rk}. 
  We first note that, due to our assumption on $r(\infty)$, we are always in the case $m=0$
  of \cite{mallo_palencia_optimal_orders_rk}.
  Since we assumed that on the imaginary axis $\abs{r(\ii t)}<1$ for $0\neq t \in \R$ ,
  we directly note that for  sufficiently small $k\leq k_0$, all the zeros
  of $r(z)-1$ except $z=0$ satisfy $\mathrm{Re}\, z > k \omega$.
  By the resolvent bound~\eqref{eq:residual_estimate_semigroups} we  can therefore estimate
 \[
 \norm{(z I - k A)^{-1}}_{\mathcal X \to\mathcal X}\leq \frac{M}{\mathrm{Re}\,z - k_0 \omega}, \qquad \mbox{if }\mathrm{Re}\,z < k_0\omega,
 \] 
i.e.,
   we have a uniform resolvent bound in  the set $Z_{\alpha,\delta}$ in \cite{mallo_palencia_optimal_orders_rk}.
  We also note that we reformulated the convergence rate such that we do not have the restriction $\mu \leq p-q-1$, since the 
  exceptional cases are already covered by Proposition~\ref{prop:AMP1}.
\end{proof}

\begin{remark}
  The assumption $\abs{r(z)}<1$ for $\Re(z) \leq 0$ and $r(\infty)\neq 1$ is satisfied by the Radau~IIA family of Runge-Kutta methods, but is
  violated by the Gauss methods, which satisfy $\abs{r(z)}=1$ on the imaginary axis.
\end{remark}

\subsection{New results in this article}
\label{sect:new_results}
In this section, we  present some a priori estimates for the convergence of Runge-Kutta methods when applied to
the abstract problem~\eqref{eq:1.1}. These can be seen as a continuation of \cite{mallo_palencia_optimal_orders_rk},
to the case where the boundary conditions are not given exactly but stem from computing discrete
integrals and differentials using the same Runge-Kutta method.

\begin{theorem}[Integrated estimate]\label{theorem:3.1}
Let $u$ solve \eqref{eq:1.1} with $u_0=0$, assume that for some $\mu \ge 0$ we have
\begin{align*}
u &\in \mathcal C^{p}([0,T];\mathcal X_\mu),  \quad
\lifting \Xi, F  \in \mathcal  C^{p-1}([0,T];\mathcal X_\mu) \cap C^{p}([0,T];\mathcal X_0).
\end{align*}
and let $x:=\partial^{-1} u$.
Let $U^k=\{ U^k_n\}$ and let $u^k=\{ u^k_n\}$ be the discrete approximation given by \eqref{eq:1.2} for a method satisfying Assumption \ref{ass:1.2}. If $X^k:=(\partial^k)^{-1} U^k$ and we define $x^k=\{x^k_n\}$ with the recurrence
\[
x_0^k:=0, \qquad x_{n+1}^k:=r(\infty) x_n^k + \rkb^\top \rkA^{-1} X_n^k, 
\]
then there exists a constant $k_0>0$ such that
 for all $k<k_0$ and  $n \in \N$ with $nk\le T$ the following estimate holds:
\begin{multline}
  \| x(t_n)-x^k_n\|_{\mathcal X}
  \le 
  C T \rho_k(T) k^{\min\{q+\mu+1,p\}}
  \begin{aligned}[t]\Big[
  \sum_{\ell=q}^{p-1}
  &\left( \| u^{(\ell)}\|_{T,\mu} + \|\lifting \Xi^{(\ell)}\|_{T,\mu}
    + \| F^{(\ell)}\|_{T,\mu}\right) \\
  +&\left( \| u^{(p)}\|_{T,\mu} + \|\lifting \Xi^{(p)}\|_{T}
    + \| F^{(p)}\|_{T}\right)   \Big]
  \end{aligned}\\
  + C \,T^2 \rho_k(T) k^{p}\left(
    \|u^{(p)}\|_{T,\mu} \|\lifting \Xi^{(p)}\|_{T}
  + \| F^{(p)}\|_{T} \right).
\end{multline}
If Assumption \ref{ass:1.3} holds
and if we assume the stronger regularities
\[
u\in \mathcal C^{p+1}([0,T];\mathcal X_\mu), \quad
F \in \mathcal C^{p}([0,T];\mathcal X_\mu), \quad
\lifting \Xi\in \mathcal C^{p}([0,T];\mathcal X_\mu),
\]

then
\begin{multline*}
\| x(t_n)-x^k_n\|_{\mathcal X}
\le  C (1+T) \rho_k(T) k^{\min\{q+\mu+2,p\}} \!
\begin{aligned}[t]
        \Big[\sum_{\ell=q}^{p}
        \| u^{(\ell)}\|_{T,\mu}+\|\lifting \Xi^{(\ell)}\|_{T,\mu}
        \hfill + \| F^{(\ell)}\|_{T,\mu} \\
        + \| u^{(p+1)}\|_{T,\mu} \Big]
      \end{aligned} \\
      + C \,T^2 \rho_k(T) k^{p}\left( \|\lifting \Xi^{(p)}\|_{T}
        + \| F^{(p)}\|_{T} \right).
\end{multline*}

The constant $k_0$ depends only on $\omega$ from~(\ref{eq:residual_estimate_semigroups})
  and the Runge-Kutta method. If $\omega=0$ then $k_0$ can be chosen arbitrarily large.
  $C$ depends on $\omega$, $M$ from~(\ref{eq:residual_estimate_semigroups}), the Runge-Kutta method
  and $\mu$.
\end{theorem}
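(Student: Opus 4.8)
The plan is to first turn the abstractly-defined discrete integral $x^k_n$ into an explicit quadrature, and then to analyse the resulting error in two well-separated pieces. Applying Lemma~\ref{lemma:2.6} to the sequence $U^k$ gives the stage identity $X^k_n=\ones x^k_n+k\rkA U^k_n$, and substituting this into the recurrence $x^k_{n+1}=r(\infty)x^k_n+\rkb^\top\rkA^{-1}X^k_n$ from the theorem, the coupling through $x^k_n$ collapses because $r(\infty)+\rkb^\top\rkA^{-1}\ones=1$; one is left with $x^k_{n+1}=x^k_n+k\rkb^\top U^k_n$, so that $x^k_n=k\sum_{j=0}^{n-1}\rkb^\top U^k_j$. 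Writing $x(t_n)=\sum_{j=0}^{n-1}\int_{t_j}^{t_{j+1}}u$ (here $u_0=0$ makes $x(0)=0=x^k_0$), the error splits as
\[
  x(t_n)-x^k_n=\sum_{j=0}^{n-1}\Big(\int_{t_j}^{t_{j+1}}u-k\rkb^\top u(t_j+k\rkc)\Big)
  +k\sum_{j=0}^{n-1}\rkb^\top\big(u(t_j+k\rkc)-U^k_j\big),
\]
that is, a pure quadrature-consistency term plus the discrete integral of the stage error $e^U_j:=u(t_j+k\rkc)-U^k_j$.

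The first sum is handled by classical quadrature theory. Since the method has classical order at least $p$, the weights $(\rkb,\rkc)$ integrate polynomials of degree $\le p-1$ exactly, so each summand is $\bigO(k^{p+1}\|u^{(p)}\|_{\mathcal X_0})$ on $[t_j,t_{j+1}]$ and the whole sum is $\bigO(Tk^{p}\|u^{(p)}\|_{T,0})$. Because $k^{p}\le k^{\min\{q+\mu+1,p\}}$ for small $k$, this is absorbed into the asserted bound; it lives only in $\mathcal X_0$, which is why $u^{(p)}$ enters the estimate through $\|u^{(p)}\|_{T}$ rather than $\|u^{(p)}\|_{T,\mu}$.

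The second, essential, term $k\sum_{j<n}\rkb^\top e^U_j$ is exactly the step error of the Runge-Kutta approximation of the integrated problem $\dot x=u$, $x(0)=0$: by the remark following Lemma~\ref{lemma:2.6}, $(\partial^k)^{-1}$ applied to stage samples of $u$ produces the internal stages of this problem. I would therefore run the analysis underlying Propositions~\ref{prop:AMP1} and~\ref{prop:AMP2} on the augmented first-order system on $\mathcal X\times\mathcal X$ with generator $(u,x)\mapsto(\AA u,u)$, constraint $(u,x)\mapsto Bu$, and forcing $(F,u)$. Assumption~\ref{ass:1.1} is inherited (closedness, the lifting, and generation carry over, the semigroup being the triangular $(u_0,x_0)\mapsto(e^{t\AA}u_0,\,x_0+\int_0^t e^{s\AA}u_0\,\mathrm ds)$), and the decomposition~\eqref{eq:1.3} identifies the augmented discrete stages with $(U^k,X^k)$, while \eqref{eq:1.3b} and the interpolation bound~\eqref{eq:interpolation_thm_est} bring in the data contributions $\|\lifting\Xi^{(\ell)}\|$ and $\|F^{(\ell)}\|$. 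The decisive structural point is that the forcing of the $x$-component is $u$ itself, which sits one level higher in the Sobolev tower than the data $\AAstar u+F$ effectively forcing the $u$-component; this is what upgrades the stage-order-limited exponent from $\min\{q+\mu,p\}$ to $\min\{q+\mu+1,p\}$ (respectively to $\min\{q+\mu+2,p\}$ when Assumption~\ref{ass:1.3} lets one invoke Proposition~\ref{prop:AMP2}), and it is also why one fewer time derivative of $u$ is required. The powers of $T$ come from the same augmentation: the integrated component of the semigroup grows linearly in $t$, so the off-diagonal block of $r(kA^{\mathrm{ext}})$ is $\bigO(k)$ and its $n$-fold iteration sums to $\bigO(T\rho_k(T))$, giving $\rho_k^{\mathrm{ext}}(T)\lesssim(1+T)\rho_k(T)$ and, for the lowest-regularity data carried in $\mathcal X_0$, the extra factor responsible for the term $T^2\rho_k(T)k^{p}$.

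The hard part will be justifying this one-order gain rigorously, i.e.\ proving that the weighted discrete integral $k\sum_j\rkb^\top e^U_j$ is a full power of $k$ smaller than the pointwise stage error $\max_j\|e^U_j\|$. This is the superconvergence of an integrated output and cannot be read off from a black-box application of Proposition~\ref{prop:AMP1} to the augmented system (which alone yields only $\min\{q+\mu,p\}$ and demands one more derivative); it must come from a summation-by-parts argument exploiting the telescoping of the leading stage-order defect, combined with careful bookkeeping of which norm ($\mathcal X_\mu$ versus $\mathcal X_0$) and which power of $T$ accompanies each term. The preparatory results of Sections~\ref{sect:computations}--\ref{sect:some_lemmas} (cf.\ Lemma~\ref{lemma:5.8}) are exactly the tools I would use to make this step precise.
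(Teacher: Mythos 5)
Your reduction of $x^k_n$ to the quadrature form $x^k_n=k\sum_{j<n}\rkb^\top U^k_j$ is correct (it is Lemma~\ref{lemma:2.6} together with $r(\infty)+\rkb^\top\rkA^{-1}\ones=1$), and the split into a quadrature-consistency sum plus $k\sum_{j<n}\rkb^\top e^U_j$ is a legitimate starting point; the quadrature part is indeed $\bigO(Tk^p)$ as in Lemma~\ref{lemma:5.2}. The gap is in the second term, and you have correctly located it but not closed it. Writing $e^U_j=(\mathcal I-k\rkA\otimes A)^{-1}\bigl[\ones e^u_j+\boldsymbol D^k(u;t_j)\bigr]$ with $e^u_j:=u(t_j)-u^k_j$ (note both carry the same stage boundary data, so the difference lies in $(\dom A)^m$), the contribution of $\ones e^u_j$ to $k\sum_j\rkb^\top e^U_j$ behaves like $k\sum_j e^u_j$, i.e.\ like the time integral of the pointwise error. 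Since $\max_j\|e^u_j\|_{\mathcal X}$ is only $\bigO(k^{\min\{q+\mu,p\}})$ by Proposition~\ref{prop:AMP1}, reaching $k^{\min\{q+\mu+1,p\}}$ requires genuine cancellation in this sum --- which is essentially the superconvergence of the integrated output you are trying to prove, so the decomposition is close to circular at this point. Telescoping the leading stage-order defect (your ``summation by parts'') converts $T\max_t\|u^{(\ell)}\|$ into $\max_t\|u^{(\ell-1)}\|$, lowering the derivative count and removing a factor of $T$, but it does not produce an extra power of $k$; and the augmented-system heuristic (``the forcing $u$ of the $x$-component sits one level higher than $\AAstar u+F$'') is not the operative mechanism, since for the augmented generator $(u,x)\mapsto(Au,u)$ one has $\dom\bigl((A^{\mathrm{ext}})^j\bigr)=\dom A^j\times\dom A^{j-1}$, so the pair $(u,x)$ gains nothing in the augmented Sobolev tower.

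The idea that actually delivers the extra power of $k$ is a \emph{spatial} regularity gain for the integrated problem: with $\Gamma:=\partial^{-1}\Xi$ and $H:=\partial^{-1}F$, the function $Y:=x-\lifting\Gamma$ satisfies $BY=0$ and $AY=\dot x-H-\lifting\Gamma\in\mathcal C^{p}([0,T];\mathcal X_\mu)$, hence $Y\in\mathcal C^{p}([0,T];\mathcal X_{\mu+1})$ (Lemma~\ref{lemma:u_with_boundary_corrections_is_regular}); this uses the PDE itself, not only the order conditions. The paper therefore works at the level of the integrated problem: by Proposition~\ref{prop:44.5}, $X^k=(\partial^k)^{-1}U^k$ is the RK approximation of $\dot x=A_\star x+H$, $Bx=\Gamma$ with \emph{discretely} integrated data $\Gamma^k_n$, $H^k_n$ (this mismatch is what generates the corrections $\Delta^k_n$, $\delta^k_n$ and the $T^2k^p$ contribution via Lemma~\ref{lemma:5.2}); in the resulting error recurrence the two consistency defects combine as $\boldsymbol D^k(x;t_n)-\boldsymbol D^k(\lifting\Gamma;t_n)=\boldsymbol D^k(Y;t_n)$, which Lemma~\ref{lemma:5.8} (resp.~Lemma~\ref{lemma:5.9}) estimates with $\mu+1$ in place of $\mu$, while the remaining data defects carry an extra factor $k\rkA$ ($\beta=1$ in those lemmas). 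Without this regularity upgrade your plan cannot reach the exponent $\min\{q+\mu+1,p\}$, so the proof as proposed has a genuine gap at its central step.
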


\begin{theorem}[Differentiated estimate]\label{theorem:3.2}
  Let $u$ solve \eqref{eq:1.1} with $u_0=0$ and
  assume $\dot{u}(0)=0$.
  Assume that for some $\mu \ge 0$ we have
  \[
    u\in \mathcal C^{p+1}([0,T];\mathcal X_\mu)\cap C^{p+2}([0,T];\mathcal X_0) 
   , \quad
   \lifting \Xi, F \in \mathcal C^{p}([0,T];\mathcal X_\mu)
   \cap C^{p+1}([0,T];\mathcal X_0),
  \]
  and let $v:=\dot u$.
  Let $U^k=\{ U^k_n\}$ and $u^k=\{ u^k_n\}$ be the discrete approximation given by \eqref{eq:1.2} for a stiffly accurate method satisfying Assumption \ref{ass:1.2}.
  
  If $V^k:=\partial^kU^k$ and $v^k_n=\mathbf e_m^\top V^k_{n-1}$,
  then there exists a constant $k_0>0$ such that
    for all $k<k_0$ and  $n\ge 1$ such that $nk\le T$ the following estimate holds:
  \begin{align*}
    \| v(t_n)-v^k_n\|_{\mathcal X}
	\le  C T \rho_k(T) k^{\min\{q+\mu,p\}-1} 
        \Big(\!&\sum_{\ell=q+1}^{p}
    \big( \| u^{(\ell+1)}\|_{T,\mu}+\|\lifting \Xi^{(\ell)}\|_{T,\mu}
      +\| F^{(\ell)}\|_{T,\mu}\big) \\
        &+ \| u^{(p+2)}\|_{T,0}+\|\lifting \Xi^{(p+1)}\|_{T,0}
      +\| F^{(p+1)}\|_{T,0} \Big).
  \end{align*}
  If, in addition, the method satisfies Assumption \ref{ass:1.3}
   and
      \[
          u\in \mathcal C^{p+2}([0,T];\mathcal X_\mu)
          , \quad
          \lifting \Xi, F \in \mathcal C^{p+1}([0,T];\mathcal X_\mu)
          \cap C^{p+2}([0,T];\mathcal X_0),
  \]
    
   then
   \begin{align*}
    \| v(t_n)-v^k_n\|_{\mathcal X}
	\le  C (1+T) \rho_k(T) k^{\min\{q+\mu,p\}} 
    &\Big(\sum_{\ell=q+1}^{p+1}
     \big( \| u^{(\ell)}\|_{T,\mu}+\|\lifting \Xi^{(\ell)}\|_{T,\mu}
      +\| F^{(\ell)}\|_{T,\mu}\big) \\
     &+ \| u^{(p+2)}\|_{T,\mu}+\|\lifting \Xi^{(p+2)}\|_{T,0}
      +\| F^{(p+2)}\|_{T,0} \Big)
   \end{align*} 
  The constant $k_0$ depends only $\omega$ from~(\ref{eq:residual_estimate_semigroups})
    and the Runge-Kutta method. If $\omega=0$, then $k_0$ can be chosen arbitrarily large.
    $C$ depends on $\omega$, $M$ from~(\ref{eq:residual_estimate_semigroups}), the Runge-Kutta method
  and $\mu$.
  
\end{theorem}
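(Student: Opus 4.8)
The plan is to reduce the differentiated estimate to a convergence estimate for a companion evolution problem solved by $v:=\dot u$, to which the estimates of Alonso--Mallo and Palencia (Propositions~\ref{prop:AMP1} and~\ref{prop:AMP2}) apply, and then to treat the discrepancy between exact and discrete time differentiation as a data perturbation. Throughout I use that for stiffly accurate methods $r(\infty)=0$, so the step solution coincides with the last internal stage, $u^k_{n}=\mathbf e_m^\top U^k_{n-1}$. First I would differentiate the continuous problem~\eqref{eq:1.1}: since $u_0=0$ and, by hypothesis, $\dot u(0)=0$, the function $v=\dot u$ solves a problem of the same structure as~\eqref{eq:1.1},
\[
\dot v=\AAstar v+\dot F,\qquad \BB v=\dot\Xi,\qquad v(0)=0,
\]
with the same generator and lifting $\lifting$ but data $\dot F$, $\dot\Xi$. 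In particular $v(t_n)=\AAstar u(t_n)+F(t_n)$, so the quantity to be estimated is really the graph-norm error of the scheme for $u$.

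Second, I would identify the discrete object $v^k$. Writing $V^k=\partial^k U^k$ and applying Lemma~\ref{lemma:2.6}, the auxiliary sequence appearing there is exactly $\{u^k_n\}$: combining~\eqref{eq:1.2a} and~\eqref{eq:1.2c} gives $u^k_{n+1}=r(\infty)u^k_n+\rkb^\top\rkA^{-1}U^k_n$ (and $u^k_0=u_0=0$), which is precisely that recurrence. Hence $V^k_n=k^{-1}\rkA^{-1}(U^k_n-\ones u^k_n)$, and inserting the stage equation~\eqref{eq:1.2a} yields $V^k_n=(\mathcal I\otimes\AAstar)U^k_n+F(t_n+k\rkc)$. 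Applying $\mathcal I\otimes\BB$ and using $\BB u^k_n=\Xi(t_n)$ (stiff accuracy, $c_m=1$) together with Lemma~\ref{lemma:2.7} shows $(\mathcal I\otimes\BB)V^k_n=(\partial^k\Xi)_n$, while applying $\partial^k$ to the interior relation gives $\partial^k V^k=(\mathcal I\otimes\AAstar)V^k+\partial^k F$. Thus $V^k$ are the stages and $v^k_n=\mathbf e_m^\top V^k_{n-1}$ the step solution of the Runge--Kutta scheme for the companion problem, but with the \emph{discretely} differentiated data $\partial^k F$, $\partial^k\Xi$ in place of $\dot F$, $\dot\Xi$. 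Equivalently one reads off the clean identity $v^k_n=\AAstar u^k_n+F(t_n)$, so that $v(t_n)-v^k_n=\AAstar\bigl(u(t_n)-u^k_n\bigr)$.

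Third, I would split
\[
v(t_n)-v^k_n=\bigl(v(t_n)-\tilde v^k_n\bigr)+\bigl(\tilde v^k_n-v^k_n\bigr),
\]
where $\tilde v^k$ is the step solution of the companion scheme with the \emph{exact} data $\dot F$, $\dot\Xi$. The first term is bounded directly by Proposition~\ref{prop:AMP1} (resp.\ Proposition~\ref{prop:AMP2} under Assumption~\ref{ass:1.3}) applied to $v=\dot u$; since $v^{(\ell)}=u^{(\ell+1)}$ this produces the solution-derivative contributions at rate $k^{\min\{q+\mu,p\}}$, one order better than claimed and hence harmless. By linearity the second term is the step solution of the companion scheme driven by the consistency errors $\dot F-\partial^k F$ and $\dot\Xi-\partial^k\Xi$ with vanishing initial value; I would estimate it by combining the stability of the scheme (the resolvent bound~\eqref{eq:2.23} and the propagation factor $\rho_k(T)$ from~\eqref{eq:def_rho_k}, yielding the factors $T\rho_k(T)$ and $T^2\rho_k(T)$) with the consistency bounds for $\partial^k$ on samples of a smooth function, all processed through the interpolation/Sobolev-tower estimate~\eqref{eq:interpolation_thm_est} exactly as in~\cite{mallo_palencia_optimal_orders_rk}. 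The factor $k^{-1}$ inherent in $\partial^k$ accounts for the loss of one power of $k$, giving rate $k^{\min\{q+\mu,p\}-1}$, and the lifted data terms $\lifting\Xi^{(\ell)}$, $F^{(\ell)}$ enter precisely here; under Assumption~\ref{ass:1.3} the sharper consistency of $\partial^k$ recovers one further order and yields the improved estimate with the stronger regularity hypotheses.

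The main obstacle is the perturbation term $\tilde v^k-v^k$: one must show that the Runge--Kutta solution driven by the discrete-differentiation consistency error decays at the stated rate in the $\mu$-graded scale. This needs sharp consistency bounds for $\partial^k$ on stage samples that correctly capture both the $+\mu$ gain coming from the smoothing of the solution operator and the reduced consistency order during the first few steps (the time-boundary layer), together with careful bookkeeping of the $T$, $T^2$ and $\rho_k(T)$ factors. This is exactly where the preparatory lemmas of Sections~\ref{sect:computations}--\ref{sect:some_lemmas} (compare Lemma~\ref{lemma:5.8}) are required.
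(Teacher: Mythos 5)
Your proposal follows essentially the same route as the paper: differentiate the continuous problem to get the companion evolution for $v=\dot u$, identify $V^k=\partial^k U^k$ (via Proposition~\ref{prop:3.7} and Lemma~\ref{lemma:2.7}) as the RK scheme for that companion problem driven by the discretely differentiated data, split off the exact-data RK approximation $\widetilde v^k$ (bounded by Propositions~\ref{prop:AMP1}--\ref{prop:AMP2}), and control the perturbation $\widetilde v^k-v^k$ through the consistency of $\partial^k$ on stage samples, which is precisely what Lemmas~\ref{lemma:5.7}--\ref{lemma:5.9} with $\beta=-1$ and $\beta=0$ (the former requiring stiff accuracy) deliver after lifting the boundary consistency error into $\dom(A)^m$. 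The argument is correct and matches the paper's proof in structure and in all essential mechanisms.
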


\begin{theorem}[Strong norm estimate]\label{theorem:3.3}
  If $u$, $\Xi$, and $F$ satisfy the hypotheses of Theorem \ref{theorem:3.2}, and $\{u_n^k\}$ is the approximation provided by \eqref{eq:1.2} for a stiffly accurate method satisfying Assumption \ref{ass:1.2} (same restrictions with respect to $k_0$), then for $n$ such that $nk\le T$,
  there holds
    \begin{align*}
    \| A_\star(u(t_n)-u^k_n)\|_{\mathcal X}
	\le  C T \rho_k(T) k^{\min\{q+\mu,p\}-1} 
        \Big(\!&\sum_{\ell=q+1}^{p}
    \big( \| u^{(\ell+1)}\|_{T,\mu}+\|\lifting \Xi^{(\ell)}\|_{T,\mu}
      +\| F^{(\ell)}\|_{T,\mu}\big) \\
        &+ \| u^{(p+2)}\|_{T,0}+\|\lifting \Xi^{(p+1)}\|_{T,0}
      +\| F^{(p+1)}\|_{T,0} \Big).
  \end{align*}
If, in addition, the method satisfies Assumption \ref{ass:1.3}, then
\begin{align*}
  \| A_\star(u(t_n)-u^k_n)\|_{\mathcal X}
  \le  C (1+T) \rho_k(T) k^{\min\{q+\mu,p\}} 
  &\Big(\sum_{\ell=q+1}^{p+1}
    \big( \| u^{(\ell)}\|_{T,\mu}+\|\lifting \Xi^{(\ell)}\|_{T,\mu}
    +\| F^{(\ell)}\|_{T,\mu}\big) \\
  &+ \| u^{(p+2)}\|_{T,\mu}+\|\lifting \Xi^{(p+2)}\|_{T,0}
    +\| F^{(p+2)}\|_{T,0} \Big)
\end{align*}
  
$C$ depends on $\omega$, $M$ from~(\ref{eq:residual_estimate_semigroups}), the Runge-Kutta method
  and $\mu$.
\end{theorem}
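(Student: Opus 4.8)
The plan is to reduce this estimate directly to the differentiated estimate of Theorem~\ref{theorem:3.2} by proving the exact algebraic identity
\[
\AAstar\bigl(u(t_n)-u^k_n\bigr)=v(t_n)-v^k_n,\qquad n\ge 1,
\]
with $v=\dot u$ and $v^k_n=\mathbf e_m^\top V^k_{n-1}$ exactly as in Theorem~\ref{theorem:3.2}. Once this identity is available, the two asserted bounds follow immediately, which is precisely why the bound claimed here is identical to that of Theorem~\ref{theorem:3.2}. On the continuous side the identity is trivial: the first line of \eqref{eq:1.1} reads $\dot u(t)=\AAstar u(t)+F(t)$, so that $\AAstar u(t_n)=v(t_n)-F(t_n)$.

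The crux is therefore to establish the discrete counterpart $\AAstar u^k_n=v^k_n-F(t_n)$. First I would invoke the two structural facts about stiffly accurate methods recorded around \eqref{eq:1.4}: since $\rkb^\top\rkA^{-1}=\mathbf e_m^\top$ and $c_m=1$, the step value is the terminal internal stage, $u^k_n=(\mathbf e_m^\top\otimes I)U^k_{n-1}$ for $n\ge1$. Next, solving the stage equation \eqref{eq:1.2a} for the generator action and recognizing the discrete stage derivative of Lemma~\ref{lemma:2.6} gives
\[
(\mathcal I\otimes\AAstar)U^k_n=k^{-1}(\rkA^{-1}\otimes I)\bigl(U^k_n-\ones u^k_n\bigr)-F(t_n+k\rkc)=V^k_n-F(t_n+k\rkc),
\]
where $V^k=\partial^k U^k$. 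Evaluating this identity at index $n-1$, applying $(\mathbf e_m^\top\otimes I)$, and using $v^k_n=\mathbf e_m^\top V^k_{n-1}$ together with $\mathbf e_m^\top F(t_{n-1}+k\rkc)=F(t_{n-1}+kc_m)=F(t_n)$, I obtain $\AAstar u^k_n=v^k_n-F(t_n)$. Subtracting from the continuous relation, the $F(t_n)$ contributions cancel and the claimed identity drops out; invoking Theorem~\ref{theorem:3.2} then yields both asserted estimates, the regularity hypotheses being literally those of Theorem~\ref{theorem:3.2}.

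I do not expect a genuine obstacle here, only two points that must be checked with care. One must verify that $\AAstar u^k_n$ is meaningful, i.e.\ that $u^k_n\in\dom\AAstar$; this follows from the decomposition \eqref{eq:1.3}, since $\mathrm{range}\,\lifting\subset\dom\AAstar$ by Assumption~\ref{ass:1.1} and $Z^k_n\in(\dom A)^m\subset(\dom\AAstar)^m$, so that $U^k_{n-1}\in(\dom\AAstar)^m$. One must also confirm that the step sequence $\{u^k_n\}$ of \eqref{eq:1.2c} coincides with the sequence generated by the recurrence of Lemma~\ref{lemma:2.6} relative to $V^k=\partial^k U^k$; this is seen by substituting the displayed expression for $(\mathcal I\otimes\AAstar)U^k_n$ into \eqref{eq:1.2c}, whereupon the two $F(t_n+k\rkc)$ terms cancel and the update collapses to $u^k_{n+1}=u^k_n+k\rkb^\top V^k_n$, exactly the recurrence of Lemma~\ref{lemma:2.6}.
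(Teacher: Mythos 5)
Your proposal is correct and is essentially the paper's own proof: the paper also reduces Theorem~\ref{theorem:3.3} to Theorem~\ref{theorem:3.2} via the identity $A_\star u^k_n=v^k_n-F(t_n)$, obtained from $(\mathcal I\otimes A_\star)U^k_n=V^k_n-F(t_n+k\rkc)$ (which it cites from Proposition~\ref{prop:3.7} rather than rederiving from \eqref{eq:1.2a} as you do) together with stiff accuracy, so that the step equals the last stage and $\mathbf e_m^\top F(t_{n-1}+k\rkc)=F(t_n)$. Your additional checks (that $u^k_n\in\dom\AAstar$ and that the step recurrence matches Lemma~\ref{lemma:2.6}) are consistent with the paper's framework and do not change the argument.
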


\begin{remark}
  Most of the effort in proving the above theorem is in order to obtain a convergence rate higher than $q$, even though the
  constraint in the stages only approximate with order $q$. This is possible by exploiting the additional structure
  of the discretization error of the side constraint.
\end{remark}

\begin{remark}
  We formulated all our results for homogeneous initial conditions, since it is sufficient for our purposes in time domain BEM and convolution quadrature.
  It should be possible to generalize these results to the case of $u_0 \in \dom(\AA^{s})$ for sufficiently large $s \geq 1$ by considering the
  evolution of the semigroup with inhomogeneous side-constraint but homogeneous initial condition and the semigroup of homogeneous constraint but inhomogeneous
  $u_0$ separately.
\end{remark}
\begin{remark}
  The loss of order by $1$ in Theorem~\ref{theorem:3.3} compared to Propositions~\ref{prop:AMP1} and~\ref{prop:AMP2} is to be expected. Indeed,  if
  we look at the case $u \in \dom(A^{\mu})$ for $\mu\geq 1$,
  this means
  $\AAstar u \in \dom(A^{\mu-1})$. Applying Proposition~\ref{prop:AMP2} to this semigroup then also gives 
  a reduced order of $k^{\min(q+\mu,p)}$.
\end{remark}


\section{Some computations related to the main theorems}
\label{sect:computations}

We will collect the sampled data and the stage and step parts of the solutions in four formal series
\begin{subequations}
\begin{alignat}{6}
\label{eq:Zdata}
\widehat F^k(z) &:=\sum_{n=0}^\infty F(t_n+k\rkc)z^n, 
	 && \qquad &
\widehat\Xi^k(z) &:=\sum_{n=0}^\infty \Xi(t_n+k\rkc)z^n,\\
\label{eq:Zsol}
\widehat U^k(z) &:=\sum_{n=0}^\infty U^k_n z^n,
	&&&
\widehat u^k(z)&:=\sum_{n=0}^\infty u^k_n z^n.
\end{alignat}
\end{subequations}
If the data functions are polynomially bounded in time, the series in \eqref{eq:Zdata} are convergent (in $\mathcal X^m$ and $\mathcal M^m$ respectively) with at least unit radius of convergence. Because of the equivalent formulation of the numerical method in the form \eqref{eq:1.3}, and using \eqref{eq:2.23},
it follows that for $k\le k_0$ (with $k_0$ chosen using \eqref{eq:2.22}), the numerical solution is
at least bounded in the form $\norm{U^k_n}_{\mathcal{X}} \lesssim C^{n}$. Thus, the two series in \eqref{eq:Zsol} also converge
on a sufficiently small disk. 


\begin{proposition}\label{prop:3.1}
The sequences $\{ U^k_n\}$ and $\{u^k_n\}$ satisfy equations \eqref{eq:1.2} if and only if
\begin{subequations}\label{eq:3.3}
\begin{alignat}{6}
\label{eq:3.3a}
k^{-1} \delta(z) \widehat U^k(z)
	&=(\mathcal I\otimes A_\star) \widehat U^k(z)+ \widehat F^k(z)
		+\frac{k^{-1}}{1-r(\infty) z} \rkA^{-1}\ones  u_0,\\
\label{eq:3.3b}
(\mathcal I\otimes B)\widehat U^k(z) &=\widehat\Xi^k(z),\\
\label{eq:3.3c}
\widehat u^k(z) &=\frac{z}{1-r(\infty) z} \rkb^\top \rkA^{-1} \widehat U^k(z)+\frac1{1-r(\infty)z}u_0^k.
\end{alignat}
\end{subequations}
\end{proposition}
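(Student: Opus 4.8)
The plan is to derive the three identities in \eqref{eq:3.3} by taking $Z$-transforms of the Runge-Kutta equations \eqref{eq:1.2a}--\eqref{eq:1.2c}. By the discussion preceding the proposition, all four series in \eqref{eq:Zdata}--\eqref{eq:Zsol} converge on a small disk about the origin, and on such sequences the $Z$-transform is injective (the coefficient of $z^n$ can be read off). Hence every transformed identity is equivalent to the corresponding time-domain recurrence, which yields the ``if and only if'' at once. Throughout I abbreviate $F_n:=F(t_n+k\rkc)$ and use $\rkb^\top\rkA^{-1}\ones=1-r(\infty)$.

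The boundary relation is immediate: since $\mathcal I\otimes B$ acts stagewise and mixes no time indices, transforming \eqref{eq:1.2b} gives \eqref{eq:3.3b}. For the step update \eqref{eq:3.3c} the key preparatory step is to eliminate $(\mathcal I\otimes\AAstar)U^k_n$ between \eqref{eq:1.2a} and \eqref{eq:1.2c}. Multiplying \eqref{eq:1.2a} by $\rkA^{-1}$ yields $k[(\mathcal I\otimes\AAstar)U^k_n+F_n]=\rkA^{-1}(U^k_n-\ones u^k_n)$; inserting this into \eqref{eq:1.2c} and using $\rkb^\top\rkA^{-1}\ones=1-r(\infty)$ collapses the update to the scalar recurrence
\[
u^k_{n+1}=r(\infty)u^k_n+\rkb^\top\rkA^{-1}U^k_n,
\]
mirroring the recurrence in Lemma~\ref{lemma:2.6}. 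Transforming this, treating the shift via $\sum_{n\ge0}u^k_{n+1}z^n=z^{-1}(\widehat u^k(z)-u^k_0)$ and solving for $\widehat u^k(z)$, produces \eqref{eq:3.3c}, the term $\tfrac{1}{1-r(\infty)z}u^k_0$ coming from the initial value.

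For the stage equation \eqref{eq:3.3a} I rewrite \eqref{eq:1.2a} as $k^{-1}\rkA^{-1}(U^k_n-\ones u^k_n)=(\mathcal I\otimes\AAstar)U^k_n+F_n$, transform, and substitute the expression \eqref{eq:3.3c} for $\widehat u^k$. Collecting the factor that multiplies $\widehat U^k(z)$ gives the combination $\rkA^{-1}-\tfrac{z}{1-r(\infty)z}\rkA^{-1}\ones\rkb^\top\rkA^{-1}$, which equals $\delta(z)$ by the second formula in \eqref{eq:rk_def_delta} (the Sherman-Morrison-Woodbury identity already invoked there), while the remaining $u^k_0$-contribution supplies the last summand of \eqref{eq:3.3a}.

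These manipulations are purely algebraic, so the bookkeeping of the shift and the recognition of $\delta(z)$ are routine. The only point requiring genuine care is the interchange $(\mathcal I\otimes\AAstar)\widehat U^k(z)=\sum_n(\mathcal I\otimes\AAstar)U^k_n\microspace z^n$, i.e.\ pulling the unbounded generator through the infinite sum. I would justify it via the closedness of $\AAstar$ together with the geometric bound $\norm{U^k_n}_{\mathcal X}\lesssim C^n$ (and the matching bound on $\AAstar U^k_n$ read off from \eqref{eq:1.2a}), which ensures that $\sum_n\AAstar U^k_n\microspace z^n$ converges on the same disk; I expect this to be the main, though minor, obstacle.
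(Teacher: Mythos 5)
Your proof is correct and follows essentially the same route as the paper: you first collapse \eqref{eq:1.2a} and \eqref{eq:1.2c} to the recurrence $u^k_{n+1}=r(\infty)u^k_n+\rkb^\top\rkA^{-1}U^k_n$ using $\rkb^\top\rkA^{-1}\ones=1-r(\infty)$, then transform and substitute \eqref{eq:3.3c} into the transformed stage equation to recognize $\delta(z)$, exactly as in the paper's argument. Your extra remark on justifying the interchange of the closed operator $\AAstar$ with the series is a reasonable addition that the paper leaves implicit.
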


\begin{proof} 
Let us start by proving a simple result: {\em the discrete equations \eqref{eq:1.2a} and \eqref{eq:1.2c} hold if and only if \eqref{eq:1.2a} and 
\begin{equation}\label{eq:3.4}
u^k_{n+1}=r(\infty) u^k_n+\rkb^\top\rkA^{-1}  U^k_n
\end{equation}
hold.} To see this, note that \eqref{eq:1.2a} is equivalent to
\[
\rkA^{-1}(U^k_n-\ones u^k_n)
	=k((\mathcal I\otimes A_\star)U^k_n+F(t_n+k\rkc))
\]
and therefore \eqref{eq:1.2a} and \eqref{eq:1.2c} imply
\[
\rkb^\top\rkA^{-1} (U^k_n-\ones u^k_n)
	=k((\rkb^\top\otimes A_\star)U^k_n+\rkb^\top F(t_n+k\rkc))
	=u^k_{n+1}-u^k_n,
\]
or equivalently \eqref{eq:3.4}. The reciprocal statement is proved similarly.
The recurrence \eqref{eq:3.4} is equivalent to \eqref{eq:3.3c}. At the same time, the recurrence \eqref{eq:1.2a} is equivalent to
\begin{equation}\label{eq:3.5}
k^{-1}\rkA^{-1} (\widehat U^k(z)-\ones \widehat u^k(z))
	=(\mathcal I\otimes A_\star)\widehat U^k(z)+\widehat F^k(z). 
\end{equation}
Plugging \eqref{eq:3.3c} into \eqref{eq:3.5}, the formula \eqref{eq:3.3a} follows.
\end{proof}

Proposition \ref{prop:3.1} is a rephrasing of \cite[Lemma 3.19]{rieder_diss},
where the computation is also laid out in more detail.
Note how equations \eqref{eq:3.3a}-\eqref{eq:3.3b} relate strongly to \eqref{eq:1.1}, with the discrete symbol $k^{-1}\delta(z)$ playing the role of the time derivative and 
\[
\frac{k^{-1}}{1-r(\infty)z} \rkA^{-1}\ones
\]
playing the role of a discrete Dirac delta at time $t=0$.

\begin{lemma}[{\cite[Lemma~{2.6}]{BanLM}}]
\label{prop:rk_spectrum_of_delta}
If the matrix $\rkA$ of the RK method is invertible, then for $\abs{z} < 1$
\[
\sigma(\delta(z)) \subseteq \sigma(\rkA^{-1}) \cup \{ w \in \C: r(w) z = 1 \}.
\]
In particular, if the Runge-Kutta method is A-stable (Assumption \ref{ass:1.2}), then $\sigma(\delta(z))\subset \C_+$.
\end{lemma}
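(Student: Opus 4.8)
The plan is to reduce the eigenvalue condition for $\delta(z)$ to a scalar equation by exploiting the rank-one structure made explicit in the second representation of $\delta(z)$ in~\eqref{eq:rk_def_delta}, and then to identify that scalar equation with $r(w)z=1$. First I would fix $z$ with $\abs{z}<1$ (so that $1-r(\infty)z\neq 0$ and $\delta(z)$ is well defined, using $\abs{r(\infty)}\le 1$) and compute $\det(\delta(z)-wI)$ for $w\notin\sigma(\rkA^{-1})$. Writing
\[
\delta(z)-wI = (\rkA^{-1}-wI) - \tfrac{z}{1-r(\infty)z}\,(\rkA^{-1}\ones)(\rkb^\top\rkA^{-1})
\]
exhibits $\delta(z)-wI$ as a rank-one perturbation of the invertible matrix $\rkA^{-1}-wI$. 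By the rank-one determinant identity $\det(X+uv^\top)=\det(X)\,(1+v^\top X^{-1}u)$,
\[
\det(\delta(z)-wI) = \det(\rkA^{-1}-wI)\Bigl(1 - \tfrac{z}{1-r(\infty)z}\,\rkb^\top\rkA^{-1}(\rkA^{-1}-wI)^{-1}\rkA^{-1}\ones\Bigr),
\]
so for $w\notin\sigma(\rkA^{-1})$ the matrix $\delta(z)-wI$ is singular precisely when the scalar factor vanishes.

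The key step is the algebraic identity
\[
\rkb^\top\rkA^{-1}(\rkA^{-1}-wI)^{-1}\rkA^{-1}\ones = r(w)-r(\infty).
\]
I would prove it with two resolvent manipulations: from $I-w\rkA=\rkA(\rkA^{-1}-wI)$ one gets $(I-w\rkA)^{-1}=(\rkA^{-1}-wI)^{-1}\rkA^{-1}$, which turns the definition $r(w)=1+w\rkb^\top(I-w\rkA)^{-1}\ones$ into $w\,\rkb^\top(\rkA^{-1}-wI)^{-1}\rkA^{-1}\ones=r(w)-1$; and from $\rkA^{-1}=(\rkA^{-1}-wI)+wI$ one gets $\rkA^{-1}(\rkA^{-1}-wI)^{-1}=I+w(\rkA^{-1}-wI)^{-1}$. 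Combining these with $\rkb^\top\rkA^{-1}\ones=1-r(\infty)$ yields the identity. Substituting it into the scalar factor, the singularity condition becomes $1-\tfrac{z}{1-r(\infty)z}(r(w)-r(\infty))=0$, which simplifies (the $r(\infty)$ contributions cancel) to exactly $r(w)z=1$. Hence every $w\in\sigma(\delta(z))$ either lies in $\sigma(\rkA^{-1})$ or satisfies $r(w)z=1$, which is the first claim.

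For the \emph{in particular} statement I would use A-stability. The excerpt already records that Assumption~\ref{ass:1.2} gives $\sigma(\rkA)\subset\C_+$, and hence $\sigma(\rkA^{-1})\subset\C_+$ since $w\mapsto 1/w$ preserves the open right half-plane. For the second set, suppose $r(w)z=1$ with $\abs{z}<1$; if $w$ had $\mathrm{Re}\,w\le 0$, then A-stability gives $\abs{r(w)}\le 1$, whence $\abs{z}=1/\abs{r(w)}\ge 1$, a contradiction. Thus $r(w)z=1$ forces $w\in\C_+$, and the union $\sigma(\rkA^{-1})\cup\{w:r(w)z=1\}$ is contained in $\C_+$.

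The hard part will be the algebraic identity $\rkb^\top\rkA^{-1}(\rkA^{-1}-wI)^{-1}\rkA^{-1}\ones=r(w)-r(\infty)$: the whole argument hinges on recognizing that the quadratic form produced by the rank-one update collapses onto the stability function, and on getting the two resolvent rewrites together with the $r(\infty)$ bookkeeping right so that the resulting condition is clean. Once this identity is in place, the determinant factorization and the A-stability estimate are routine.
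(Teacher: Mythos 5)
Your argument is correct. Note that the paper does not prove this lemma at all --- it is imported verbatim as \cite[Lemma~2.6]{BanLM} --- so there is no in-paper proof to compare against; your rank-one determinant computation, hinging on the identity $\rkb^\top\rkA^{-1}(\rkA^{-1}-w\id)^{-1}\rkA^{-1}\ones=r(w)-r(\infty)$ (which I have checked), together with the observation that $\abs{r(w)}\le 1$ on the closed left half-plane forces $\abs{z}\ge 1$ whenever $r(w)z=1$, is a complete and essentially the standard derivation of the cited result. The only cosmetic point is that your parenthetical justification that $1-r(\infty)z\neq 0$ already invokes $\abs{r(\infty)}\le 1$ (i.e.\ A-stability) in the first part of the lemma, where formally only invertibility of $\rkA$ is assumed; this is harmless, since the claim about $\sigma(\delta(z))$ is vacuous at any $z$ where $\delta(z)$ fails to be defined.
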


We need a  corollary to the previous result:
\begin{corollary}
  \label{cor:spectrum_delta_uniform}
  Let Assumption~\ref{ass:1.2} hold.
  Then, for all $r_0<1$, there exists a constant $d>0$ such that for all $\abs{z}<r_0$ it holds that
  $$
  \sigma\big(\delta(z)\big) \subset \Big\{ w \in \C_+: \Re(w) > d \Big \}.
  $$
\end{corollary}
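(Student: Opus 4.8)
The plan is to deduce the corollary from Lemma~\ref{prop:rk_spectrum_of_delta} by combining the spectral inclusion stated there with a compactness/continuity argument that makes the lower bound on $\Re(w)$ uniform over the closed disk $\{|z|\le r_0\}$. Lemma~\ref{prop:rk_spectrum_of_delta} already gives, for each fixed $z$ with $|z|<1$, the inclusion $\sigma(\delta(z))\subseteq \sigma(\rkA^{-1})\cup\{w:r(w)z=1\}$, and the final sentence of that lemma tells us $\sigma(\delta(z))\subset\C_+$. So the content to be extracted is purely the \emph{uniformity} of the gap $\Re(w)>d$; there is no new spectral phenomenon to discover.

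First I would handle the two pieces of the inclusion separately. The set $\sigma(\rkA^{-1})$ is finite and $z$-independent, and under Assumption~\ref{ass:1.2} we have $\sigma(\rkA)\subset\C_+$, hence $\sigma(\rkA^{-1})\subset\C_+$ as well (the map $w\mapsto 1/w$ preserves the open right half-plane). Thus $d_1:=\min\{\Re(w):w\in\sigma(\rkA^{-1})\}>0$ is a fixed positive number — this is exactly the quantity $d_0$ already introduced in~\eqref{eq:2.22}. The remaining piece is the $z$-dependent set $S(z):=\{w\in\C:r(w)z=1\}$. For these $w$ we have $|r(w)|=1/|z|\ge 1/r_0>1$. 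By $A$-stability (Assumption~\ref{ass:1.2}(\ref{ass:rk_is_a_stable})) the stability function satisfies $|r(w)|\le 1$ whenever $\Re(w)\le 0$, so any such $w$ must lie in $\C_+$; the task is to show $\Re(w)$ stays bounded \emph{away} from $0$ uniformly.

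The key step is to rule out the possibility that points of $S(z)$ approach the imaginary axis as $|z|\to r_0$ (or accumulate at infinity). I would argue by contradiction: suppose there is a sequence $z_n$ with $|z_n|\le r_0$ and $w_n\in S(z_n)$ with $\Re(w_n)\to 0^+$. Since $r$ is rational and bounded at infinity with poles only in $\C_+$ (noted after Assumption~\ref{ass:1.2}), the condition $|r(w_n)|=1/|z_n|\ge 1/r_0>1$ together with $|r(\infty)|\le 1$ (recall $r(\infty)=1-\rkb^\top\rkA^{-1}\ones$, and $A$-stability forces $|r(\infty)|\le1$) keeps the $w_n$ in a \emph{bounded} region: large $|w|$ forces $|r(w)|$ close to $|r(\infty)|\le1$, contradicting $|r(w_n)|>1/r_0$. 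Hence, after passing to a subsequence, $w_n\to w_\star$ with $\Re(w_\star)=0$. By continuity of $r$ on the closed right half-plane (where it has no poles) and of the modulus, $|r(w_\star)|=\lim 1/|z_n|\ge 1/r_0>1$, contradicting the $A$-stability bound $|r(w_\star)|\le 1$ on the imaginary axis. This contradiction yields a uniform lower bound $d_2>0$ on $\Re(w)$ over all $w\in S(z)$, $|z|\le r_0$. Taking $d:=\min\{d_1,d_2\}$ completes the argument.

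The main obstacle I anticipate is making the boundedness/compactness step fully rigorous, namely controlling the behaviour of $S(z)$ near $|w|=\infty$ and near the imaginary axis simultaneously. One must be careful that $r$ is genuinely bounded and continuous up to the closed imaginary axis — which is guaranteed because the poles of $r$ lie in $\C_+$ and are therefore at positive distance from $\{\Re(w)=0\}$ — and that the level set $\{|r(w)|\ge 1/r_0\}$ intersected with a neighbourhood of the imaginary axis is bounded. A clean way to package this is to note that $\{w:\Re(w)\le 0\}\cup\{w:|r(w)|\le 1/r_0\}$ is a closed set whose complement (where candidate points $w$ must live) is, after intersecting with the region where $|r|$ can exceed $1/r_0$, contained in a compact subset of $\C_+$; the strict inequality $|r(\ii t)|\le 1$ versus the strict requirement $|r(w)|\ge 1/r_0>1$ then forces the positive distance to the imaginary axis. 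Everything else is routine continuity and the finiteness of $\sigma(\rkA^{-1})$.
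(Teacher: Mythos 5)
Your proposal is correct and takes essentially the same route as the paper: both reduce the statement, via Lemma~\ref{prop:rk_spectrum_of_delta}, to the fixed finite set $\sigma(\rkA^{-1})\subset\C_+$ plus the level set $\{w: \abs{r(w)}\ge 1/r_0\}$, which is shown to be bounded (because $\abs{r(\infty)}\le 1 < 1/r_0$), closed near the imaginary axis (poles of $r$ lie in $\C_+$), and disjoint from the axis by $A$-stability — you merely package the resulting positive-distance conclusion as a sequential compactness/contradiction argument where the paper states it directly for a compact set. One small slip: $r$ is continuous in a neighbourhood of the \emph{imaginary axis} and the closed \emph{left} half-plane (its poles lie in the open right half-plane), not "on the closed right half-plane" as written mid-proof, though your final paragraph states this correctly.
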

\begin{proof}
  
    In view of of  Lemma~\ref{prop:rk_spectrum_of_delta},
    since $\sigma(\rkA)$ is finite, independent of $z$, and contained in $\C_+$, we are mainly
    concerned with the set $\{ w \in \C: r(w) z = 1 \}$.
    We first note that
    $$
    \bigcup_{\abs{z}\leq r_0} \{ w \in \C: r(w) z = 1 \}
    \subseteq \{ w \in \C: \abs{r(w)} \geq  1/r_0 \}.
    $$
    Second, we observe that by taking $d_0$ small enough, we can ensure that
    $w \mapsto r(w)$ is continuous for $\Re(w)\leq d_0$ and thus
    \begin{align*}
      \{ w \in \C: \abs{r(w)} \geq  1/r_0 \} \cap \{w \in \C: \Re(w)\leq d_0\} \\
      = {r|_{\{\Re(w)\leq d_0\}}}^{-1}\big([1/r_0,\infty)\big) 
    \end{align*}
    is a closed set.
    Third, by considering the limit along the imaginary axis, we get
    $$
    \abs{r(\infty)}=\lim_{n \to \infty} \abs{r(\ii n)}\leq 1.
    $$
    Thus, for $\abs{w}$ sufficiently large, it holds that
    $\abs{r(w)} \leq 1/r_0$.

    Overall, we get that
    $$
    \{ w \in \C: \abs{r(w)} \geq  1/r_0 \} \cap \{w \in \C: \Re(w)\leq d_0\} 
    $$
    is a compact set with empty intersection  with the imaginary axis.
    Thus, it must have a positive distance from it.
    Applying these observations and Lemma~\ref{prop:rk_spectrum_of_delta} concludes the proof.

\end{proof}

\begin{lemma}\label{lemma:3.6}	
  Let Assumptions \ref{ass:1.1} and \ref{ass:1.2} hold. For $r_0<1$,
  there exists $k_0=k_0(\omega,r_0)>0$ such that for all $k\le k_0$ and $|z|\le r_0$ the problem
\begin{subequations}\label{eq:3.9}
\begin{alignat}{6}
-k^{-1}\delta(z) \widehat U
	 + (\mathcal I\otimes A_\star) \widehat U &=\widehat F,\\
(\mathcal I\otimes B) \widehat U &=\widehat\Xi
\end{alignat}
\end{subequations}
has a unique solution for arbitrary $\widehat F\in \mathcal X^m$ and $\widehat\Xi\in \mathcal M^m$. If $\omega=0$ in Proposition \ref{prop:2.1}, then there are no restrictions on $k$, and the results holds for all $|z|<1$.
\end{lemma}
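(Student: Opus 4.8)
The plan is to reduce the system to homogeneous boundary conditions by means of the lifting $\lifting$, and then to invert the resulting operator with Lemma~\ref{lemma:2.2}; this mirrors exactly the splitting~\eqref{eq:1.3} used to set up the method. First I would put $\widehat Y := (\mathcal I \otimes \lifting)\widehat\Xi$ and look for $\widehat U$ in the form $\widehat U = \widehat Y + \widehat Z$. Since $B\lifting = I$, the constraint $(\mathcal I \otimes B)\widehat U = \widehat\Xi$ is equivalent to $(\mathcal I \otimes B)\widehat Z = 0$, i.e.\ $\widehat Z \in (\ker B)^m = (\dom A)^m$. On this space $\AAstar$ acts as $A$, and because $\mathrm{range}\,\lifting \subset \ker(I - \AAstar)$ we have $(\mathcal I \otimes \AAstar)\widehat Y = \widehat Y$.

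Substituting $\widehat U = \widehat Y + \widehat Z$ into the first equation and using these two facts, the whole system collapses to the single equation
\[
\big(\mathcal I \otimes A - k^{-1}\delta(z)\big)\widehat Z = \widehat F - \widehat Y + k^{-1}\delta(z)\widehat Y ,
\]
to be solved for $\widehat Z \in (\dom A)^m$; the right-hand side is a well-defined element of $\mathcal X^m$ because $\lifting$ maps into $\dom \AAstar \subset \mathcal X$. The operator on the left is precisely of the form $\mathcal I \otimes A - \mathcal S \otimes I$ treated in Lemma~\ref{lemma:2.2}, with $\mathcal S := k^{-1}\delta(z)$.

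The crux is therefore to verify the spectral separation hypothesis $\sigma(A) \cap \sigma(\mathcal S) = \emptyset$, and to do so uniformly in $z$. Here Corollary~\ref{cor:spectrum_delta_uniform} does the heavy lifting: for $|z| \le r_0 < 1$ it supplies a $z$-independent $d > 0$ with $\sigma(\delta(z)) \subset \{\Re w > d\}$, hence $\sigma(\mathcal S) = k^{-1}\sigma(\delta(z)) \subset \{\Re w > d/k\}$. On the other side, Proposition~\ref{prop:2.1} gives $\sigma(A) \subset \{\Re z \le \omega\}$. These two sets are disjoint as soon as $d/k > \omega$, so choosing $k_0 := d/\omega$ (depending only on $\omega$ and $r_0$ through $d$) makes Lemma~\ref{lemma:2.2} applicable for every $k \le k_0$ and every $|z| \le r_0$, which yields existence. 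Uniqueness follows by the same invertibility: the difference $\widehat W$ of two solutions lies in $(\dom A)^m$, satisfies $(\mathcal I \otimes A - \mathcal S \otimes I)\widehat W = 0$, and hence vanishes.

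Finally, when $\omega = 0$ the spectrum $\sigma(A)$ sits in $\{\Re z \le 0\}$ while $\sigma(\mathcal S) \subset \C_+$ for every $k > 0$, so the two never meet and no restriction on $k$ is needed; letting $r_0$ be arbitrary then covers the stated regime. The same argument extends to each fixed $z$ with $|z| < 1$, where Lemma~\ref{prop:rk_spectrum_of_delta} already guarantees $\sigma(\delta(z)) \subset \C_+$, enough to separate it from $\sigma(A) \subset \{\Re z \le 0\}$. The only genuine obstacle is making the separation \emph{uniform} in $z$ over the closed disc $|z| \le r_0$, so that a single $k_0$ works for all such $z$; that uniformity is exactly the content of Corollary~\ref{cor:spectrum_delta_uniform}, which is why I would rely on it rather than on the pointwise Lemma~\ref{prop:rk_spectrum_of_delta}.
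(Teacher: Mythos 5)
Your proposal is correct and follows essentially the same route as the paper: lift the boundary data with $\lifting$, reduce to a problem for $\widehat Z\in(\dom A)^m$ with the operator $\mathcal I\otimes A-k^{-1}\delta(z)\otimes I$, invert via Lemma~\ref{lemma:2.2} using the spectral separation $\sigma(A)\subset\{\Re z\le\omega\}$ versus $\sigma(k^{-1}\delta(z))$, and invoke Corollary~\ref{cor:spectrum_delta_uniform} for the uniformity in $z$ over $|z|\le r_0$. The only cosmetic difference is that the paper first treats a general matrix $\mathcal S$ with $\sigma(\mathcal S)\subset\{\Re z>\omega\}$ and then specializes to $\mathcal S=k^{-1}\delta(z)$, whereas you work with $k^{-1}\delta(z)$ directly.
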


\begin{proof}
Assume first that $\mathcal S\in \mathbb C^{m\times m}$ is such that $\sigma(\mathcal S) \subset \{ z\,:\, \mathrm{Re}\,z>\omega\}$ and consider the problem
\begin{subequations}\label{eq:3.10}
\begin{alignat}{6}
-(\mathcal S \otimes I) \widehat U + (\mathcal I\otimes A_\star) \widehat U &=\widehat F,\\
(\mathcal I\otimes B) \widehat U &=\widehat\Xi.
\end{alignat}
\end{subequations}
Take first $\widehat V:=(\mathcal I\otimes \mathscr E)\widehat\Xi$ (where $\mathscr E$ is the lifting operator of Assumption \ref{ass:1.1}) and then seek $\widehat W\in (\mathrm{dom}\,A)^m$ satisfying
\[
-(\mathcal S\otimes I)\widehat W+(\mathcal I\otimes A)\widehat W
=\widehat F+((\mathcal S-\mathcal I)\otimes I)\widehat V.
\]
This problem is uniquely solvable by Lemma~\ref{lemma:2.2}, since $\sigma(A)\subset \{ z\,:\,\mathrm{Re}\,z\le \omega\}$ and therefore $\sigma(A)\cap\sigma(\mathcal S)=\emptyset.$ We then define $\widehat U:=\widehat V+\widehat W$, which solves \eqref{eq:3.10}.
To see uniqueness, one observes that the difference of two solutions solves the
  homogeneous problem ($\widehat{\Xi}=0$ and  $\widehat{F}=0$) for
  which uniqueness was established in Lemma~\ref{lemma:2.2}.

By Corollary~\ref{cor:spectrum_delta_uniform},
the union of the spectra of $\delta(z)$ for $|z|\le r_0$ has a positive distance $d(r_0)>0$ from the imaginary axis.
If we take $k_0< d(r_0)/\omega$, then $\sigma(k^{-1}\delta(z))\subset \{ s\,:\, \mathrm{Re}\,s>\omega\}$ for all $|z|\le r_0$ and $k\le k_0$. When $\omega=0$, we can take any $k_0$. By the previous considerations this implies unique solvability.
\end{proof}

\begin{proposition}\label{prop:3.7}
Let $U^k=\{U^k_n\}$ and $u^k=\{u^k_n\}$ be sequences satisfying \eqref{eq:1.2} with $u^k_0=0$. The sequence $V^k=\{ V^k_n\}=\partial^k U^k$ satisfies
\begin{subequations}\label{eq:3.8}
\begin{align}
  V^k_n &= \ones  \, v^k_n+ k (\rkA \otimes \AAstar) V^k_n + k \rkA G_n^k,  \\
  (\mathcal I\otimes B ) V^k_n&=\Theta_n^k, \\
  v^k_{n+1} &= r(\infty) v^k_n + \rkb^\top\rkA^{-1} V^k_n,
\end{align}
\end{subequations}
for data $v^k_0=0$, $G^k=\{G^k_n\}:=\partial^k \{ F(t_n+k\rkc)\}$, and $\Theta^k=\{ \Theta^k_n\}:=\partial^k \{ \Xi(t_n+k\rkc)\}$. Moreover,
\begin{subequations}
\begin{alignat}{6}
\label{eq:3.90a}
(\mathcal I\otimes A_\star) U^k_n &=V^k_n-F(t_n+k\rkc),\\
\label{eq:3.90b}
V^k_n &=k^{-1}\rkA^{-1}(U^k_n-\ones u^k_n).
\end{alignat}
\end{subequations}
\end{proposition}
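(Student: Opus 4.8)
The plan is to pass to $Z$-transforms, where $\partial^k$ is just left-multiplication by the scalar matrix $k^{-1}\delta(z)$, and to reduce everything to the characterization in Proposition~\ref{prop:3.1}. Since $u^k_0=0$, that proposition gives $k^{-1}\delta(z)\widehat U^k(z)=(\mathcal I\otimes A_\star)\widehat U^k(z)+\widehat F^k(z)$, i.e.\ \eqref{eq:3.3a} without the Dirac term. By the definition $V^k=\partial^k U^k$ the left-hand side equals $\widehat V^k(z)$, so comparing coefficients of $z^n$ yields \eqref{eq:3.90a} immediately. Identity \eqref{eq:3.90b} is exactly the antiderivative recurrence \eqref{eq:2.dantider} of Lemma~\ref{lemma:2.6}: the auxiliary step sequence there satisfies $u_{n+1}=r(\infty)u_n+\rkb^\top\rkA^{-1}U^k_n$ with $u_0=0$, which is \eqref{eq:3.4}, so it coincides with $\{u^k_n\}$ and \eqref{eq:3.90b} follows.

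Before invoking \eqref{eq:3.8} I would record the domain bookkeeping that makes it meaningful, since the term $(\rkA\otimes A_\star)V^k_n$ requires $V^k_n\in(\dom A_\star)^m$. I would deduce this from \eqref{eq:3.90b} together with the claim $u^k_n\in\dom A_\star$, proved by induction from \eqref{eq:3.4}: one has $u^k_0=0\in\dom A_\star$, and if $u^k_n\in\dom A_\star$ then $u^k_{n+1}=r(\infty)u^k_n+\rkb^\top\rkA^{-1}U^k_n\in\dom A_\star$ because every component of $U^k_n$ lies in $\dom A_\star$. Hence $V^k_n=k^{-1}\rkA^{-1}(U^k_n-\ones u^k_n)\in(\dom A_\star)^m$.

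To prove \eqref{eq:3.8} I would verify the transform system of Proposition~\ref{prop:3.1} for the quadruple $(V^k,v^k,G^k,\Theta^k)$ with vanishing initial value and then use the equivalence of that proposition in the reverse direction. The key mechanism is that $k^{-1}\delta(z)$, acting only on the stage index, commutes with the tensor operators $\mathcal I\otimes A_\star$ and $\mathcal I\otimes B$, which act only in the spatial variable. Applying $k^{-1}\delta(z)$ to \eqref{eq:3.3a} and commuting it past $\mathcal I\otimes A_\star$ gives $k^{-1}\delta(z)\widehat V^k(z)=(\mathcal I\otimes A_\star)\widehat V^k(z)+k^{-1}\delta(z)\widehat F^k(z)=(\mathcal I\otimes A_\star)\widehat V^k(z)+\widehat G^k(z)$, the last step being the definition of $G^k=\partial^k\{F(t_n+k\rkc)\}$. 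Applying $\mathcal I\otimes B$ to $\widehat V^k(z)=k^{-1}\delta(z)\widehat U^k(z)$, commuting past $\delta(z)$, and using \eqref{eq:3.3b} gives $(\mathcal I\otimes B)\widehat V^k(z)=k^{-1}\delta(z)\widehat\Xi^k(z)=\widehat\Theta^k(z)$. Finally the $Z$-transform of $v^k_{n+1}=r(\infty)v^k_n+\rkb^\top\rkA^{-1}V^k_n$ with $v^k_0=0$ is $\widehat v^k(z)=\tfrac{z}{1-r(\infty)z}\rkb^\top\rkA^{-1}\widehat V^k(z)$. These three identities are precisely \eqref{eq:3.3a}--\eqref{eq:3.3c} for $(V^k,v^k)$ with data $(G^k,\Theta^k)$ and zero initial value, so Proposition~\ref{prop:3.1} yields \eqref{eq:3.8}.

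The $Z$-transform algebra is routine; the genuine subtlety, and the step I would treat most carefully, is the commutation of the scalar matrix $k^{-1}\delta(z)$ with the unbounded operator $\mathcal I\otimes A_\star$. Because $\delta(z)$ only mixes the $m$ stage components while $A_\star$ acts entrywise in $\mathcal X$, the commutation holds coefficientwise as long as all sequences take values in $(\dom A_\star)^m$, which is exactly what the inductive membership argument secures. One should also note that the relevant power series converge on a small disk about $z=0$, as recorded after~\eqref{eq:Zsol}, so all the manipulations are legitimate.
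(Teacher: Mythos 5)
Your proof is correct and follows essentially the same route as the paper: pass to $Z$-transforms, commute the scalar symbol $k^{-1}\delta(z)$ past $\mathcal I\otimes A_\star$ and $\mathcal I\otimes B$, and invoke the equivalence of Proposition~\ref{prop:3.1} in both directions. The only cosmetic differences are that you obtain \eqref{eq:3.90b} directly from the recurrence of Lemma~\ref{lemma:2.6} rather than from \eqref{eq:3.5} and \eqref{eq:3.90a}, and that you make explicit the domain bookkeeping that the paper leaves implicit.
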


\begin{proof}
Recall that equations \eqref{eq:1.2a} and \eqref{eq:1.2c} are equivalent to \eqref{eq:1.2a} and \eqref{eq:3.4}, as shown in the proof of Proposition \ref{prop:3.1}. Moreover, the latter equations are equivalent to \eqref{eq:3.3} in the $Z$-domain. In the present case we have $u_0=0$. For a given square matrix $\mathcal P\in \mathbb C^{m\times m}$ and an operator $C$, we have 
\[
(\mathcal P\otimes I)(\mathcal I\otimes C)=\mathcal P\otimes C=(\mathcal I\otimes C)(\mathcal P\otimes I),
\]
which proves that
\begin{subequations}\label{eq:3.100}
\begin{alignat}{6}
k^{-1} \delta(z) \widehat V^k(z)
	&=(\mathcal I\otimes A_\star) \widehat V^k(z)+ \widehat G^k(z),\\
(\mathcal I\otimes B)\widehat V^k(z) &=\widehat\Theta^k(z),\\
\widehat v^k(z) &=\frac{z}{1-r(\infty) z} \rkb^\top \rkA^{-1} \widehat V^k(z).
\end{alignat}
\end{subequations}
By Proposition \ref{prop:3.1}, equations \eqref{eq:3.100} are equivalent to \eqref{eq:3.8}. Finally \eqref{eq:3.90a} follows from \eqref{eq:3.3a}, while \eqref{eq:3.90b} follows from \eqref{eq:3.5} and \eqref{eq:3.90a}.
\end{proof}

\begin{proposition}\label{prop:44.5}
Let $U^k=\{U^k_n\}$ and $u^k=\{u^k_n\}$ be sequences satisfying \eqref{eq:1.2} with $u^k_0=0$. The sequence $X^k=\{X^k_n\}=(\partial^k)^{-1} U^k$ satisfies
\begin{subequations}
\begin{align}
  X^k_n &= \ones  \, x^k_n+ k (\rkA \otimes \AAstar) X^k_n + k \rkA H_n^k,  \\
  (\mathcal I\otimes B ) X^k_n&=\Gamma_n^k, \\
  x^k_{n+1} &= r(\infty) x^k_n + \rkb^\top\rkA^{-1} X^k_n\\
  		&= x^k_n+k(\rkb^\top\otimes A_\star)X^k_n+k \rkb^\top H_n^k,
\end{align}
\end{subequations}
for data $x^k_0:=0$,  $H^k=\{H^k_n\}:=(\partial^k)^{-1} \{ F(t_n+k\rkc)\}$, and $\Gamma^k=\{\Gamma^k_n\}:=(\partial^k)^{-1} \{ \Xi(t_n+k\rkc)\}$. 
\end{proposition}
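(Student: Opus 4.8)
The plan is to mirror the proof of Proposition~\ref{prop:3.7}, replacing the discrete differentiation $\partial^k$ by its inverse $(\partial^k)^{-1}$ throughout and working in the $Z$-domain. The starting point is Proposition~\ref{prop:3.1}: since $\{U^k_n\}$ and $\{u^k_n\}$ satisfy \eqref{eq:1.2} with $u^k_0=0$, their $Z$-transforms satisfy the system \eqref{eq:3.3} with $u_0=0$. Because $X^k=(\partial^k)^{-1}U^k$, the defining relation of $\partial^k$ gives $\widehat U^k(z)=k^{-1}\delta(z)\widehat X^k(z)$, and the strategy is to substitute this identity into \eqref{eq:3.3a} and \eqref{eq:3.3b} and then rearrange.

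The key algebraic tool is the commutation identity $(\mathcal P\otimes I)(\mathcal I\otimes C)=(\mathcal I\otimes C)(\mathcal P\otimes I)$ already exploited in Proposition~\ref{prop:3.7}, applied with $\mathcal P=\delta(z)$ and $C\in\{A_\star,B\}$. Substituting $\widehat U^k=k^{-1}\delta(z)\widehat X^k$ into \eqref{eq:3.3a} and moving $k^{-1}\delta(z)$ past $\mathcal I\otimes A_\star$, the equation rearranges into
\[
k^{-1}\delta(z)\Big[k^{-1}\delta(z)\widehat X^k(z)-(\mathcal I\otimes A_\star)\widehat X^k(z)\Big]=\widehat F^k(z).
\]
Since $\sigma(\delta(z))\subset\mathbb{C}_+$ for $|z|<1$ by Lemma~\ref{prop:rk_spectrum_of_delta}, the matrix $\delta(z)$ is invertible, so applying $k\,\delta(z)^{-1}$ identifies the bracketed quantity as $(\partial^k)^{-1}F^k=H^k$ and yields $k^{-1}\delta(z)\widehat X^k=(\mathcal I\otimes A_\star)\widehat X^k+\widehat H^k$. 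The same substitute-and-commute step applied to \eqref{eq:3.3b} gives $(\mathcal I\otimes B)\widehat X^k=(\partial^k)^{-1}\Xi^k=\widehat\Gamma^k$. Finally, the recurrence $x^k_{n+1}=r(\infty)x^k_n+\rkb^\top\rkA^{-1}X^k_n$ with $x^k_0=0$ transforms to $\widehat x^k=\tfrac{z}{1-r(\infty)z}\rkb^\top\rkA^{-1}\widehat X^k$, which is precisely \eqref{eq:3.3c} for $X^k$ with vanishing initial value.

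Having verified that $(X^k,x^k)$ satisfies the $Z$-domain system \eqref{eq:3.3} with data $(H^k,\Gamma^k)$ and zero initial value, I would invoke the reverse direction of Proposition~\ref{prop:3.1} to conclude that $(X^k,x^k)$ satisfies the time-domain Runge--Kutta equations \eqref{eq:1.2} with these data, which are exactly the first three displayed identities of the proposition. The alternative form of the step update, $x^k_{n+1}=x^k_n+k(\rkb^\top\otimes A_\star)X^k_n+k\rkb^\top H^k_n$, then follows from the equivalence of \eqref{eq:1.2c} and \eqref{eq:3.4} established inside the proof of Proposition~\ref{prop:3.1}, now read with data $H^k$.

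As for difficulties, the argument is essentially bookkeeping parallel to Proposition~\ref{prop:3.7}, so there is no deep obstacle. The only point requiring genuine care is the division by $\delta(z)$, namely identifying the bracketed term as $H^k=(\partial^k)^{-1}F^k$ rather than merely as some sequence whose $\partial^k$ equals $F^k$. This is legitimate precisely because $\delta(z)$ is invertible on the relevant disk by Lemma~\ref{prop:rk_spectrum_of_delta}, and because all the formal series involved converge there, as discussed in the paragraph preceding Proposition~\ref{prop:3.1}.
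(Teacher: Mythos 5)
Your proposal is correct and follows essentially the same route as the paper, whose entire proof of this proposition is the instruction to follow the proof of Proposition~\ref{prop:3.7}; you carry out exactly that adaptation in the $Z$-domain, using the commutation identity, the invertibility of $\delta(z)$ from Lemma~\ref{prop:rk_spectrum_of_delta}, and the equivalence established in Proposition~\ref{prop:3.1}. The one point you flag as needing care (dividing by $\delta(z)$ to identify the bracketed term as $\widehat H^k$) is handled correctly.
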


\begin{proof}
Follow the proof of Proposition \ref{prop:3.7}.
\end{proof}

\section{Some Lemmas regarding Runge-Kutta methods}
\label{sect:some_lemmas}

In order to shorten the statements of the results of this section, in all of them we will understand that:
\begin{itemize}
\item[(1)] We have an RK method with coefficients $\rkA, \rkb, \rkc$ satisfying Assumption \ref{ass:1.2} (invertibility of $\rkA$ and A-stability). The method has classical order $p$ and stage order $q$.
\item[(2)] We have an operator $A$ in $\mathcal X$ that is the generator of a $C_0$-semigroup,
  characterized by the quantities $M$ and $\omega$ of Proposition \ref{prop:2.1}.
  The associated Sobolev tower $\{ \mathcal X_\mu\}$, obtained by interpolation of $\dom A^\mu$ for positive integer values of $\mu$, will also be used. 
\end{itemize}
The following lemma will be used at a key point in the arguments below.

\begin{lemma}\label{lemma:2.4}
Let $A$ be a linear operator in $\mathcal X$ and $q$ be a rational function bounded at infinity with poles outside $\sigma(A)$. The following properties hold:
\begin{itemize}
\item[(a)] The operator $q(A)$ maps $\dom A^\ell$ to $\dom A^\ell$ for all $\ell.$ 
\item[(b)] If $0\notin \sigma(A)$, and we define $p(z):=z^{-\ell} q(z)$, then $q(A)=p(A)A^\ell$ in $\dom A^\ell$.
\end{itemize}
\end{lemma}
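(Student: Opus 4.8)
The plan is to prove both parts by reducing to the factored form of $q(A)$ given in Definition~\ref{def:2.3}, since each factor is a resolvent-type operator whose interaction with $\dom A^\ell$ is easy to control.

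For part (a), I would first observe that $q(A)$ is a finite product of factors of the form $c_i \id + (c_i\lambda_i - 1)(A-\lambda_i \id)^{-1}$. By induction on the number of factors, it suffices to show that a single such factor maps $\dom A^\ell$ into itself. The identity part is trivial, so the real content is that the resolvent $R_{\lambda_i} := (A-\lambda_i \id)^{-1}$ maps $\dom A^\ell$ into $\dom A^\ell$. The key algebraic fact is that $R_\lambda$ commutes with $A$ on $\dom A$: for $w \in \dom A$ we have $A R_\lambda w = R_\lambda A w$, which follows from $A R_\lambda = \id + \lambda R_\lambda$ (rearranging $(A-\lambda\id)R_\lambda = \id$) together with $R_\lambda(A-\lambda\id)w = w$. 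Iterating this commutation relation $\ell$ times shows that if $w \in \dom A^\ell$ then $A^j R_\lambda w = R_\lambda A^j w$ is well-defined for all $j \le \ell$, hence $R_\lambda w \in \dom A^\ell$. The main obstacle here is purely bookkeeping: one must be careful that $R_\lambda$ genuinely maps $\dom A^\ell$ into $\dom A^\ell$ (not merely that the formal identity holds), which is where the commutation relation, applied inductively, does the work.

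For part (b), under the assumption $0 \notin \sigma(A)$ the operator $A^{-1}$ (equivalently $A^\ell$ and its inverse) is available, and $p(z) = z^{-\ell} q(z)$ is again a rational function bounded at infinity whose poles lie outside $\sigma(A)$ — here the new pole at $z=0$ introduced by $z^{-\ell}$ is admissible precisely because $0 \notin \sigma(A)$. The plan is to verify the operator identity $q(A) = p(A) A^\ell$ on $\dom A^\ell$ by comparing the two functional-calculus expressions. Since the calculus of Definition~\ref{def:2.3} is multiplicative (different orderings of factors give the same bounded operator, as noted after the definition), and since $z^\ell \cdot p(z) = q(z)$ as rational functions, the operators $p(A)A^\ell$ and $q(A)$ agree: one writes $A^\ell$ itself via the calculus applied to the monomial $z^\ell$ (interpreted through its reciprocal, the admissible factor $z^{-\ell}$), and uses that products of the calculus respect products of the underlying rational functions. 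By part (a), $p(A)$ preserves $\dom A^\ell$, so the composition $p(A)A^\ell$ is well-defined on $\dom A^\ell$ and the identity makes sense there.

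I expect the main obstacle to be making the functional-calculus manipulations in part (b) rigorous rather than merely formal, because Definition~\ref{def:2.3} only defines $q(A)$ for $q$ bounded at infinity and does not a priori give meaning to multiplying by the unbounded operator $A^\ell$. The clean way around this is to rephrase the claim as $q(A) A^{-\ell} = p(A)$ on all of $\mathcal X$, where $A^{-\ell}$ is bounded (since $0 \notin \sigma(A)$) and $A^{-\ell} = (z^{-\ell})(A)$ in the sense of the calculus; then both sides are bona fide bounded operators defined via Definition~\ref{def:2.3}, they agree because the corresponding rational functions coincide, and restricting to $\dom A^\ell$ and multiplying by $A^\ell$ on the right recovers the stated form $q(A) = p(A)A^\ell$.
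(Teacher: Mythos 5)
Your proposal is correct and takes essentially the same route as the paper: part (a) reduces to showing the resolvent factors of Definition~\ref{def:2.3} preserve $\dom A^\ell$ (the paper proves the slightly stronger statement, by induction, that $(A-\lambda \id)^{-1}$ maps $\dom A^\ell$ into $\dom A^{\ell+1}$, via the same commutation mechanism), and part (b) is handled exactly as in the paper through the bounded-operator identity $p(A)=q(A)A^{-\ell}$, which is then multiplied by $A^\ell$ on $\dom A^\ell$.
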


\begin{proof} 
To prove (a), show first by induction on $\ell $ that $(A-\lambda\id)^{-1}$ maps $\dom A^\ell$ into $\dom A^{\ell+1}$. Using this result for each of the factors in the definition \eqref{eq:2.4} the result follows. To prove (b) note first that $p$ is rational, bounded at infinity, and that $\sigma(A)$ does not intersect the set of poles of $p$. Using Definition \ref{def:2.3}, we have $p(A)=q(A)A^{-\ell}=A^{-\ell} q(A)$, and the result follows. 
\end{proof}

We start by recalling some simple facts about RK methods that we will need in the sequel. Using the notation $\rkc^{\ell}:=(c_{1}^{\ell},\dots, c_{m}^{\ell})^\top$, the following equalities (order conditions) hold (see e.g.~\cite{mallo_palencia_optimal_orders_rk,ostermann_roche})):
\begin{subequations}
 \label{eq:subeqs:stage_order_condition}
\begin{align}
   \rkc^{\ell} &=\ell \rkA \rkc^{\ell-1},  \quad & 0 &\leq 1\leq \ell \leq q, \label{eq:stage_order_condition} \\
  \rkb^\top \rkA^{j} \rkc^{\ell} &=\frac{\ell!}{(j+\ell+1)!}, \quad& 0 &\leq j+\ell \leq p-1.\label{eq:order_condition}
\end{align}
\end{subequations}
Therefore, 
\begin{subequations}\label{eq:5.2}
\begin{alignat}{6}
\label{eq:5.2a}
\rkb^\top\rkA^j(\rkc^\ell-\ell \rkA \rkc^{\ell-1}) &= 0
	& \qquad & 0\le j\le p-\ell-1, \quad 1\leq \ell\le p\\
\label{eq:5.2b}
\ell \rkb^\top\rkc^{\ell-1} &=1, && 1 \leq \ell\le p.
\end{alignat}
\end{subequations}
For a stiffly accurate method we have \eqref{eq:1.4} and therefore
\begin{equation}\label{eq:4.44}
\rkb^\top\rkA^{-1}\rkc^\ell=c_m^\ell=1.
\end{equation}

\begin{lemma}[Discrete antiderivative and RK quadrature]\label{lemma:5.2}
Let $f:[0,T]\to \mathcal X$, $g:=\partial^{-1}f$, $G^k=\{ G^k_n\}=(\partial^k)^{-1} \{ f(t_n+k\rkc)\}$ and $\{ g^k_n\}$ be given by the recursion
\[
g^k_0:=0, \qquad g_{n+1}^k:=g_n^k+k\rkb^\top f(t_n+k\rkc).
\]
For the errors $d^k_n:=g(t_n)-g^k_n$, and for $n$ such that $nk\le T$, we have the estimates
\begin{subequations}
\begin{alignat}{6}
\label{eq:55.4a}
\| d^k_n\|_{\mathcal X} 
	&\le C T k^p  \| f^{(p)}\|_{T,0}, \\
\label{eq:55.4b}
\| d^k_n-d^k_{n-1}\|_{\mathcal X}
	& \le C k^{p+1} 
		\max_{t_{n-1}\le t \le t_n}\| f^{(p)}(t)\|_{\mathcal X}.
\end{alignat}
Additionally, at the stage level we have
\begin{equation}
\label{eq:55.4c}
\| k\rkb^\top g(t_n+k\rkc)-k\rkb^\top G^k_n\|_{\mathcal X}
	\le C k^{p+1} \big(\| f^{(p-1)}\|_{T,0}+T \| f^{(p)}\|_{T,0}\big).
\end{equation}
\end{subequations}
\end{lemma}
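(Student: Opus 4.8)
The plan is to reduce everything to local quadrature errors and then exploit the order conditions \eqref{eq:5.2} and \eqref{eq:order_condition}. The central observation is that the step recursion for $g^k_n$ is exactly the $\rkb$-quadrature of $f$ over one step, so that the increment $g^k_{n+1}-g^k_n = k\rkb^\top f(t_n+k\rkc)$ approximates $\int_{t_n}^{t_{n+1}} f(\tau)\,\mathrm d\tau = g(t_{n+1})-g(t_n)$. Since $d^k_0 = g(0)-g^k_0 = 0$, the error telescopes as $d^k_n = \sum_{j=0}^{n-1}(d^k_{j+1}-d^k_j)$, and $d^k_n-d^k_{n-1}$ is precisely the local quadrature defect on $[t_{n-1},t_n]$.

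First I would prove \eqref{eq:55.4b}. Writing $d^k_n-d^k_{n-1} = \int_{t_{n-1}}^{t_n} f(\tau)\,\mathrm d\tau - k\rkb^\top f(t_{n-1}+k\rkc)$ and Taylor expanding $f$ about $t_{n-1}$, the order conditions \eqref{eq:5.2b} (equivalently $\rkb^\top\rkc^{\ell-1} = 1/\ell$ for $1\le\ell\le p$) say that the $\rkb$-quadrature integrates polynomials of degree $\le p-1$ exactly on the reference interval, so all Taylor terms up to degree $p-1$ cancel and only the remainder, of size $Ck^{p+1}\max_{[t_{n-1},t_n]}\|f^{(p)}\|$, survives; this is \eqref{eq:55.4b}. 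Summing these $n\le T/k$ local bounds and using $nk\le T$ then yields \eqref{eq:55.4a} directly.

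The work is in \eqref{eq:55.4c}, which demands order $p+1$ at the stage level although each stage carries only stage order $q$. Here I would invoke Lemma~\ref{lemma:2.6}: since $G^k=(\partial^k)^{-1}\{f(t_n+k\rkc)\}$ is the stage sequence of the RK method applied to $\dot x = f$, $x(0)=0$, the explicit recursion \eqref{eq:2.dder} gives $G^k_n = \ones g^k_n + k\rkA f(t_n+k\rkc)$, with step value $g^k_n$ coinciding with the one in the statement. Decomposing the stagewise error $g(t_n+kc_i)-(G^k_n)_i$ into the already-controlled step error $d^k_n$ (whose $\rkb$-weight is $\rkb^\top\ones = 1$ by \eqref{eq:5.2b} at $\ell=1$) plus the local stage defect $\epsilon_{n,i} := \int_{t_n}^{t_n+kc_i} f\,\mathrm d\tau - k\sum_j a_{ij} f(t_n+kc_j)$, one obtains $k\rkb^\top\big(g(t_n+k\rkc)-G^k_n\big) = k\,d^k_n + k\rkb^\top\epsilon_n$, and the first term is $\le CTk^{p+1}\|f^{(p)}\|_{T,0}$ by \eqref{eq:55.4a}.

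The main obstacle, and the crux of the superconvergence, is bounding $\|k\rkb^\top\epsilon_n\|\le Ck^{p+1}\|f^{(p-1)}\|_{T,0}$ despite each $\epsilon_{n,i}$ being only $O(k^{q+1})$. I would Taylor-expand $f$ about $t_n$ to degree $p-2$; the coefficient of $f^{(\ell)}(t_n)$ in $k\rkb^\top\epsilon_n$ turns out to be $\tfrac{k^{\ell+2}}{\ell!}$ times $\tfrac{1}{\ell+1}\rkb^\top\rkc^{\ell+1} - \rkb^\top\rkA\rkc^\ell$, which vanishes for $0\le\ell\le p-2$ upon combining \eqref{eq:5.2b} ($\rkb^\top\rkc^{\ell+1} = 1/(\ell+2)$) with \eqref{eq:order_condition} at $j=1$ ($\rkb^\top\rkA\rkc^\ell = \ell!/(\ell+2)!$), since $\tfrac{1}{(\ell+1)(\ell+2)} = \tfrac{\ell!}{(\ell+2)!}$. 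Thus it is the coupling of the stage-order and quadrature-order conditions, rather than either alone, that forces the cancellation; only the Taylor remainder of order $k^{p-1}\|f^{(p-1)}\|$ survives, and the two $k$-scalings in $k\rkb^\top\epsilon_n$ supply the extra factor $k^2$, giving the claimed $k^{p+1}\|f^{(p-1)}\|_{T,0}$. Adding the two contributions establishes \eqref{eq:55.4c}.
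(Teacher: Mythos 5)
Your proof is correct and follows essentially the same route as the paper: local quadrature defects plus telescoping for \eqref{eq:55.4a}--\eqref{eq:55.4b}, and for \eqref{eq:55.4c} the identity $G^k_n=\ones g^k_n+k\rkA f(t_n+k\rkc)$ from Lemma~\ref{lemma:2.6} together with the decomposition into the step error $d^k_n$ (weighted by $\rkb^\top\ones=1$) and the stage defect. Your explicit verification that $\tfrac{1}{\ell+1}\rkb^\top\rkc^{\ell+1}=\rkb^\top\rkA\rkc^{\ell}$ via \eqref{eq:5.2b} and \eqref{eq:order_condition} is exactly the condition $\rkb^\top(\rkc^{\ell+1}-(\ell+1)\rkA\rkc^{\ell})=0$, i.e.\ \eqref{eq:5.2a} with $j=0$, which the paper invokes directly via a Taylor expansion of $g$ to degree $p-1$ instead of $f$ to degree $p-2$.
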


\begin{proof}
Since
\[
d^k_n-d^k_{n-1}
	=\int_{t_{n-1}}^{t_n} f(\tau)\mathrm d\tau-k\rkb^\top f(t_{n-1}+k\rkc)
\]
and the RK method defines a quadrature formula with degree of precision $p-1$ (as follows from the order conditions \eqref{eq:5.2b}), the bound \eqref{eq:55.4b} follows. Using a telescopic sum argument (and $d^k_0=0$), \eqref{eq:55.4a} is shown to be a consequence of \eqref{eq:55.4b}. Since $G^k_n=\ones g^k_n+k\rkA f(t_n+k\rkc)$ (see Lemma \ref{lemma:2.6}), we can write
\[
k\rkb^\top (g(t_n+k\rkc)-G^k_n)
 = k\rkb^\top (g(t_n+k\rkc)-\ones g(t_n)-k\rkA \dot g(t_n+k\rkc))
 	-k  d^k_n.
\]
The order conditions \eqref{eq:5.2a} with $j=0$ prove that if $\pi$ is a polynomial of degree $p-1$, then
\[
\rkb^\top (\pi(k\rkc)-\ones \pi(0)-k\rkA \dot\pi(k\rkc))=0.
\]
A Taylor expansion of degree $p-1$ about $t_n$ can then be used to show that
\[
\|k\rkb^\top (g(t_n+k\rkc)-\ones g(t_n)-k\rkA \dot g(t_n+k\rkc))\|_{\mathcal X}
	\le k^{p+1} \max_{t_n\le \tau\le t_{n+1}} \| g^{(p)}(\tau)\|_{\mathcal X},
\]
and \eqref{eq:55.4c} follows.
\end{proof}

\subsection{Estimates on rational functions of the operator}

We will use the rational functions
\begin{alignat}{6}
\label{eq:4.1}
r_{\ell,\beta}(z) & := z \rkb^\top(\id-z\rkA)^{-1}\rkA^\beta (\rkc^\ell-\ell \rkA\rkc^{\ell-1}),
\qquad \beta\in \{-1,0,1\}, \\
\label{eq:55.6}
s_n(z) &:=\sum_{j=0}^n r(z)^j.
\end{alignat}
Note that these rational functions are bounded at infinity and that $r_{\ell,\beta}(0)=0$. We will also use the vector-valued rational function
\begin{equation}\label{eq:5.defg}
\mathbf g(z)^\top:=z \rkb^\top (\id - z \rkA)^{-1},
\end{equation}
and note that $\mathbf g(0)=\mathbf 0$ and $r(z)=1+\mathbf g(z)^\top\ones$.

\begin{lemma}\label{lemma:4.1}
The rational functions \eqref{eq:4.1} satisfy
\begin{equation}
  \label{eq:lemma:4.1}
r_{\ell,\beta}(z)=\mathcal O(|z|^{p+1-\ell-\beta}) \quad \mbox{as $|z|\to 0$},
	\qquad \ell\le p, \quad \beta\in\{0,1\}.
\end{equation}
The estimate~\eqref{eq:lemma:4.1} is also valid for $\beta=-1$ if the method is stiffly accurate.
\end{lemma}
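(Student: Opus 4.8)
The plan is to exploit the fact that $r_{\ell,\beta}$ is analytic near the origin and to read off its low-order Taylor coefficients directly. Since $\rkA$ is invertible (Assumption~\ref{ass:1.2}), the only singularities of $z\mapsto(\id-z\rkA)^{-1}$ are at the reciprocals of the eigenvalues of $\rkA$, which stay bounded away from $0$; hence $(\id-z\rkA)^{-1}$ is analytic in a neighbourhood of $z=0$ and there admits the Neumann expansion $(\id-z\rkA)^{-1}=\sum_{i\ge 0} z^i\rkA^i$. Substituting this into the definition \eqref{eq:4.1} and reindexing with $j=i+1$ yields the convergent power series
\[
r_{\ell,\beta}(z)=\sum_{j=1}^\infty z^j\,\rkb^\top\rkA^{\,j-1+\beta}\,(\rkc^\ell-\ell\rkA\rkc^{\ell-1}).
\]
Consequently the claimed estimate $r_{\ell,\beta}(z)=\bigO(|z|^{p+1-\ell-\beta})$ as $|z|\to 0$ is equivalent to showing that the coefficient $\rkb^\top\rkA^{\,j-1+\beta}(\rkc^\ell-\ell\rkA\rkc^{\ell-1})$ vanishes for every $1\le j\le p-\ell-\beta$.

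For $\beta\in\{0,1\}$ this is immediate from the stage-order identity \eqref{eq:5.2a}. Indeed, for $1\le j\le p-\ell-\beta$ the exponent $j-1+\beta$ satisfies $0\le j-1+\beta\le p-\ell-1$, where the lower bound uses $\beta\ge 0$; this is exactly the range in which \eqref{eq:5.2a} forces the coefficient to be zero. Hence the first possibly nonzero term occurs at $j=p+1-\ell-\beta$, which gives the claim. (The edge cases $\ell=p$, and $\beta=1$ generally, make the range of $j$ empty, so the estimate is then trivially inherited from the leading factor $z$.)

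The only genuinely separate point is the stiffly accurate case $\beta=-1$, which I expect to be the one subtle step. Here the same exponent count covers $2\le j\le p-\ell+1$ through \eqref{eq:5.2a}, but the single term $j=1$, with coefficient $\rkb^\top\rkA^{-1}(\rkc^\ell-\ell\rkA\rkc^{\ell-1})$, falls outside the admissible range of \eqref{eq:5.2a} because the power $\rkA^{-1}$ appears. This is precisely where stiff accuracy must be invoked: using $\rkb^\top\rkA^{-1}=\mathbf{e}_m^\top$ together with $\rkb^\top\rkA^{-1}\rkc^\ell=c_m^\ell=1$ from \eqref{eq:4.44} and $\ell\,\rkb^\top\rkc^{\ell-1}=1$ from \eqref{eq:5.2b}, one finds $\rkb^\top\rkA^{-1}(\rkc^\ell-\ell\rkA\rkc^{\ell-1})=1-1=0$. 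With this $j=1$ term eliminated, all coefficients for $1\le j\le p+1-\ell-\beta$ vanish, and the estimate follows as before. Everything apart from this $\beta=-1$, $j=1$ identity is pure bookkeeping of exponents against the order conditions.
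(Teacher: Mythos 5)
Your proposal is correct and follows essentially the same route as the paper: expand $(\id-z\rkA)^{-1}$ as a Neumann series near $z=0$, kill the low-order coefficients with the order conditions \eqref{eq:5.2a}, and handle the single exceptional coefficient $\rkb^\top\rkA^{-1}(\rkc^\ell-\ell\rkA\rkc^{\ell-1})$ in the case $\beta=-1$ via stiff accuracy through \eqref{eq:4.44} and \eqref{eq:5.2b}, exactly as in the paper. The only (harmless) blemish is the parenthetical claim that $\beta=1$ "generally" empties the range of $j$ — that is only true when $\ell=p$ — but your main exponent count already covers $\beta=1$ with $\ell<p$ correctly.
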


\begin{proof}
  Using the Neumann series expansion for the inverse in the definition or $r_{\ell,-1}$, valid for $|z|\le 1/\|\rkA\|$, we can write
\begin{alignat*}{6}
r_{\ell,-1}(z) &= z (\rkb^\top\rkA^{-1}\rkc^\ell-\ell \rkb^\top\rkc^{\ell-1})
	+\sum_{j=0}^\infty z^{j+2} \rkb^\top \rkA^j (\rkc^\ell-\ell\rkA \rkc^{\ell-1})\\
	&=\sum_{j=p-\ell}^\infty z^{j+2} \rkb^\top \rkA^j (\rkc^\ell-\ell\rkA \rkc^{\ell-1})
	=\mathcal O(|z|^{p-\ell+2}),
\end{alignat*}
after using \eqref{eq:5.2} and \eqref{eq:4.44} (for which we needed stiff accuracy). With the same technique we write
\[
r_{\ell,0}(z)=\sum_{j=0}^\infty z^{j+1}\rkb^\top \rkA^j (\rkc^\ell-\ell\rkA \rkc^{\ell-1})
=\sum_{j=p-\ell}^\infty z^{j+1}\rkb^\top \rkA^j (\rkc^\ell-\ell\rkA \rkc^{\ell-1})
=\mathcal O(|z|^{p-\ell+1}),
\] 
and
\[
r_{\ell,1}(z)=\sum_{j=1}^\infty z^j \rkb^\top \rkA^j (\rkc^\ell-\ell\rkA \rkc^{\ell-1})
=\sum_{j=p-\ell}^\infty z^j \rkb^\top \rkA^j (\rkc^\ell-\ell\rkA \rkc^{\ell-1})
=\mathcal O(|z|^{p-\ell}),
\]
which finishes the proof.
\end{proof}

\begin{lemma}\label{lemma:4.2}
If the RK method satisfies Assumption \ref{ass:1.3}, then there exists a constant $k_0>0$ depending on the RK method and on $\omega$ such that for $\beta\in\{0,1\}$, $\ell \leq p-\beta$, and all $0<k\leq k_0$ with $0\leq n\,k \leq T$, we have the estimate
\begin{equation}\label{eq:4.4}
  \norm{ s_n(k A) r_{\ell,\beta}(k\,\AA)}_{\mathcal X_\mu\to\mathcal X}
  \leq C \,\rho_k(T) k^{\min\{\mu,p-\ell-\beta\}}
\end{equation}
with $\rho_k(T)$ defined in~(\ref{eq:def_rho_k}).
If $\ell=p$ and $\beta=1$, the left-hand side of \eqref{eq:4.4} is bounded by $C\rho_k(T)$. 
The constant $C>0$ in \eqref{eq:4.4} depends only on the Runge-Kutta method,  $M$ and $\omega$, $k_0$, $\ell$, and $\mu$, but is independent of  $n$ and $k$.  If the Runge-Kutta method is stiffly accurate, the estimate \eqref{eq:4.4} also holds for $\beta=-1$.
If $\omega=0$, then $k_0$ can be chosen arbitrarily.
\end{lemma}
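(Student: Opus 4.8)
The plan is to reduce everything to a bound on the single rational function $\tilde r_{\ell,\beta}(z):=r_{\ell,\beta}(z)/(1-r(z))$. The starting point is the elementary identity $s_n(z)=(1-r(z)^{n+1})/(1-r(z))$, which yields the operator factorization
\[
s_n(kA)\,r_{\ell,\beta}(kA)=\bigl(I-r(kA)^{n+1}\bigr)\,\tilde r_{\ell,\beta}(kA).
\]
Since $\rho_k(T)\ge1$ and $\norm{r(kA)^{n+1}}_{\mathcal X\to\mathcal X}\le\norm{r(kA)^{n}}\,\norm{r(kA)}\le C\rho_k(T)$ (the last factor being uniformly bounded for $k\le k_0$ by \eqref{eq:2.3} and \eqref{eq:2.23}), the first factor has $\mathcal X\to\mathcal X$ norm at most $C\rho_k(T)$. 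Hence it suffices to prove $\norm{\tilde r_{\ell,\beta}(kA)}_{\mathcal X_\mu\to\mathcal X}\le C\,k^{\min\{\mu,\,p-\ell-\beta\}}$.

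Next I would pin down the pole and zero structure of $\tilde r_{\ell,\beta}$. Its poles are those of $r_{\ell,\beta}$ (located at $1/\sigma(\rkA)\subset\C_+$) together with the zeros of $1-r$ other than $z=0$. This is where Assumption~\ref{ass:1.3} enters: since $|r(z)|<1$ for $\Re z<0$ and $|r(\ii t)|<1$ for $t\neq0$, the only zero of $1-r$ in $\{\Re z\le0\}$ is $z=0$, and as there are finitely many zeros, all remaining poles of $\tilde r_{\ell,\beta}$ satisfy $\Re z\ge d$ for a fixed $d>0$; because $r(\infty)<1$, the function $\tilde r_{\ell,\beta}$ is also bounded at infinity. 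Choosing $k_0<d/\omega$ (any $k_0$ if $\omega=0$) ensures $\sigma(kA)\subset\{\Re z\le k\omega<d\}$ stays clear of these poles for $k\le k_0$, so $\tilde r_{\ell,\beta}(kA)$ is a bounded operator whose norm is controlled via \eqref{eq:2.3} by the uniform resolvent bound \eqref{eq:2.23}; this already gives the endpoint $\norm{\tilde r_{\ell,\beta}(kA)}_{\mathcal X\to\mathcal X}\le C$ (the case $\mu=0$). The gain in powers of $k$ comes from Lemma~\ref{lemma:4.1}: since $r_{\ell,\beta}(z)=\mathcal O(|z|^{p+1-\ell-\beta})$ and $1-r$ has a simple zero at $0$, the function $\tilde r_{\ell,\beta}$ has a zero of order $m:=p-\ell-\beta$ at the origin.

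For the integer endpoints I would factor $\tilde r_{\ell,\beta}(z)=z^{j}w_j(z)$ with $w_j(z):=z^{-j}\tilde r_{\ell,\beta}(z)$ for $0\le j\le m$; since $w_j$ still has a zero of order $m-j\ge0$ at the origin, it has no pole there, is bounded at infinity, and inherits the pole separation above, so $\norm{w_j(kA)}_{\mathcal X\to\mathcal X}\le C$ uniformly. By the functional calculus (Lemma~\ref{lemma:2.4}) one obtains, for $u\in\dom A^{j}$,
\[
\tilde r_{\ell,\beta}(kA)\,u=k^{j}\,w_j(kA)\,A^{j}u,
\]
an identity in which no inversion of $A$ occurs, so it remains valid even when $0\in\sigma(A)$. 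This gives $\norm{\tilde r_{\ell,\beta}(kA)}_{\mathcal X_j\to\mathcal X}\le C\,k^{j}$ for integers $0\le j\le m$, and for $j>m$ the bound $Ck^{m}$ follows from the case $j=m$ and the embedding $\mathcal X_j\hookrightarrow\mathcal X_m$. Interpolating between two consecutive integer endpoints via \eqref{eq:interpolation_thm_est} and Definition~\ref{def:2.4} (both integers $\le m$, or both $\ge m$, using that $m\in\N_0$) then upgrades these to $\norm{\tilde r_{\ell,\beta}(kA)}_{\mathcal X_\mu\to\mathcal X}\le C\,k^{\min\{\mu,m\}}$ for all real $\mu\ge0$, which is the claim. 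The two special cases fit the same scheme: for $\ell=p,\beta=1$ one has $m=-1$, but $\tilde r_{p,1}$ is merely bounded (order $0$) at the origin, so only the trivial endpoint survives and the left-hand side is bounded by $C\rho_k(T)$; for stiffly accurate methods, Lemma~\ref{lemma:4.1} extends to $\beta=-1$ with $m=p-\ell+1$, and the argument goes through verbatim.

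The main obstacle, and the step carrying the real content, is the pole analysis of $1-r$: establishing under Assumption~\ref{ass:1.3} that the zeros of $1-r$ stay off the imaginary axis apart from $z=0$, and hence that the resolvents entering $\tilde r_{\ell,\beta}(kA)$ are bounded \emph{uniformly in $k$}. This is precisely the mechanism exploited in the proof of Proposition~\ref{prop:AMP2} and quantified by Corollary~\ref{cor:spectrum_delta_uniform}; without Assumption~\ref{ass:1.3} (e.g.\ for the Gauss methods) such zeros may lie on the imaginary axis and the uniform bound fails. A secondary technical point is the careful bookkeeping of the zero order of $\tilde r_{\ell,\beta}$ at the origin and the factoring out of $A^{j}$, which must be carried out without inverting $A$ since $0$ may belong to $\sigma(A)$.
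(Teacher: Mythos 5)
Your proof is correct and follows essentially the same route as the paper: the identity $s_n(z)r_{\ell,\beta}(z)=(1-r(z)^{n+1})\,r_{\ell,\beta}(z)/(1-r(z))$ is exactly the paper's factorization with $q_{\ell,\beta,\mu}(z)z^\mu$, the pole/zero analysis of $r_{\ell,\beta}/(1-r)$ under Assumption~\ref{ass:1.3} is the same, and the conclusion via factoring out $A^{j}$, nesting for $j>m$, and interpolation matches the paper step for step. Your explicit remark that the identity $\tilde r_{\ell,\beta}(kA)u=k^{j}w_j(kA)A^{j}u$ needs no inversion of $A$ is a welcome (minor) sharpening, since the paper invokes Lemma~\ref{lemma:2.4}(b), which is stated under the hypothesis $0\notin\sigma(A)$.
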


\begin{proof}
We adapt the proof of \cite[Lemma 6]{mallo_palencia_optimal_orders_rk}, which only covers the case $\beta=0$. Consider first the case $p-\ell-\beta\ge 0$ and take any integer $\mu$ such that $0\le \mu\le p-\ell-\beta$. We then write
\[
r_{\ell,\beta}(z) \sum_{j=0}^n r(z)^j 
=(r(z)^{n+1}-1) q_{\ell,\beta,\mu}(z) z^\mu,
	\quad 
q_{\ell,\beta,\mu}(z):=\frac{r_{\ell,\beta}(z)}{(r(z)-1) z^\mu}.
\]
From its definition, we observe that the rational function $r_{\ell,\beta}$
  is bounded at infinity, has a zero of order $p-\ell-\beta+1$ at $z=0$ (Lemma \ref{lemma:4.1}), and its poles are contained in $\sigma(\rkA^{-1})\subset \C_+$ (by A-stability).
Note that $r(0)=1$ with nonzero derivative, since $r(z)$ is an approximation of $\exp(z)$ with order $p+1$. Therefore, the rational function $(r(z)-1)z^\mu$ has a zero of order $\mu+1\le p-\ell-\beta+1$ at $z=0$. All its other zeros are in $\mathbb C_+$, since by Assumption \ref{ass:1.3}, $|r(z)|<1$ for all $z\neq 0$ with $\mathrm{Re}\,z\le 0$. This implies that the rational function $q_{\ell,\beta,\mu}$, which is bounded at infinity (we are using here that $r(\infty)\neq 1$, which is part of Assumption \ref{ass:1.3}), has its poles in 
\begin{equation}\label{eq:Lambda}
\Lambda:=\{ z\neq 0\,:\, r(z)=1\}\cup\sigma(\rkA^{-1})\subset \C_+.
\end{equation}
Take now $k_0>0$ such that $k_0\omega< \lambda_{\min}:=\min\{ \mathrm{Re}\,\lambda\,:\,\lambda\in \Lambda\}$ and note that by Proposition \ref{prop:2.1}
\[
\| (\lambda \id-k A)^{-1}\|_{\mathcal X\to\mathcal X} \le \frac{M}{\lambda_{\min}-k_0\omega}
\qquad\forall\lambda\in \Lambda, \quad \forall k\le k_0.
\]
Therefore, using \eqref{eq:2.3}
\begin{equation}\label{eq:4.5}
\| q_{\ell,\beta,\mu}(kA)\|_{\mathcal X \to \mathcal X} \le C \qquad k\le k_0,
\end{equation}
where $C$ depends on $M$, $\omega$, $k_0$, and the RK method. By Lemma \ref{lemma:2.4} we have
\[
r_{\ell,\beta}(kA) \sum_{j=0}^n r(kA)^jx=k^\mu (r(kA)^{n+1}-\id) q_{\ell,\beta,\mu}(kA) A^\mu x
\quad\forall x\in \dom A^\mu, \quad k\le k_0.
\] 
This and \eqref{eq:4.5} proves \eqref{eq:4.4} for integer $\mu\le p-\ell-\beta$. For larger integer values of $\mu$, the result does not need to be proved
as the maximum rate is already attained. We just have to estimate
  the $\XX_{p-\ell-\beta}$ norm by the stronger $\XX_\mu$ norm.
For real values of $\mu$, we use interpolation.

We still need to prove the result when $p-\ell-\beta=-1,$ which can only happen when $\ell=p$ and $\beta=1.$
We note that $r_{p,1}(0)=0$ and
  we can therefore argue as in the previous case for $\mu=0$.
\end{proof}

\begin{lemma}\label{lemma:4.3}
If the RK method satisfies Assumption \ref{ass:1.3} and $k_0$ is the value given in Lemma \ref{lemma:4.2},
then
\begin{equation}\label{eq:4.7}
\|  s_n(k \,\AA) \mathbf g(k\, \AA)^\top\|_{\mathcal{X}^m\to\mathcal X}\leq  C\, \rho_k(T),
\end{equation}
for all $k\le k_0$ and $n$ such that $nk\le T$. 
\end{lemma}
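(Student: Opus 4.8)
The plan is to follow the same blueprint as the proof of Lemma~\ref{lemma:4.2}: extract the geometric-sum operator via a telescoping identity, isolate a rational factor whose poles all lie in $\C_+$, and then invoke the functional calculus of Definition~\ref{def:2.3} together with the resolvent bound of Proposition~\ref{prop:2.1}. Concretely, I would start from the telescoping identity $s_n(z)\big(r(z)-1\big)=r(z)^{n+1}-1$ and recall the relation $r(z)=1+\mathbf g(z)^\top\ones$ noted after~\eqref{eq:5.defg}. This motivates introducing the vector-valued rational function
\begin{equation*}
\mathbf Q(z)^\top:=\frac{\mathbf g(z)^\top}{r(z)-1},
\end{equation*}
so that, componentwise, $s_n(z)\,\mathbf g(z)^\top=\big(r(z)^{n+1}-1\big)\mathbf Q(z)^\top$.

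The heart of the matter — and the step I expect to be the main obstacle — is verifying that each scalar component of $\mathbf Q$ is a rational function that is bounded at infinity and has all its poles in $\C_+$, which is exactly what is needed to apply~\eqref{eq:2.3}. There are two candidate singularities to rule out. At $z=0$ the denominator $r(z)-1$ has a \emph{simple} zero, since $r$ approximates $\exp$ to high order so that $r(z)-1=z+\mathcal O(z^2)$; meanwhile $\mathbf g(0)=\mathbf 0$ forces each entry of $\mathbf g$ to vanish to order at least one, so the quotient is regular at the origin. At infinity, Assumption~\ref{ass:1.3} gives $r(\infty)<1$, hence $r(\infty)-1\neq0$, while $\mathbf g(\infty)^\top=-\rkb^\top\rkA^{-1}$ is finite; thus $\mathbf Q$ is bounded there. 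The remaining poles of $\mathbf Q$ occur either where $r(z)=1$ with $z\neq0$ — which lie in $\C_+$ by Assumption~\ref{ass:1.3}, exactly as argued in Lemma~\ref{lemma:4.2} — or at the poles $\sigma(\rkA^{-1})\subset\C_+$ of $\mathbf g$. Hence every component of $\mathbf Q$ has its poles in the set $\Lambda$ of~\eqref{eq:Lambda}.

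With this in hand, taking the same $k_0$ as in Lemma~\ref{lemma:4.2} (so that $k_0\omega<\lambda_{\min}$), the resolvent estimate of Proposition~\ref{prop:2.1} together with~\eqref{eq:2.3} yields $\|\mathbf Q(kA)^\top\|_{\mathcal X^m\to\mathcal X}\le C$ uniformly for all $k\le k_0$. Since all the rational functions involved have spectra disjoint from $\sigma(A)$, the functional calculus respects their products, so at the operator level $s_n(kA)\,\mathbf g(kA)^\top=\big(r(kA)^{n+1}-\id\big)\mathbf Q(kA)^\top$. Finally I would estimate $\|r(kA)^{n+1}-\id\|_{\mathcal X\to\mathcal X}\le\|r(kA)^{n+1}\|_{\mathcal X\to\mathcal X}+1\le 2\rho_k(T)$, using $\rho_k(T)\ge\|r(kA)^0\|_{\mathcal X\to\mathcal X}=1$ and handling the shift from $n$ to $n+1$ in the definition of $\rho_k(T)$ precisely as in Lemma~\ref{lemma:4.2}. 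Combining this with the uniform bound on $\mathbf Q(kA)^\top$ gives $\|s_n(kA)\,\mathbf g(kA)^\top\|_{\mathcal X^m\to\mathcal X}\le C\,\rho_k(T)$, which is~\eqref{eq:4.7}. Everything except the origin/infinity cancellation reduces to the standard resolvent-and-functional-calculus machinery already deployed for Lemma~\ref{lemma:4.2}.
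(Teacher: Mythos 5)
Your proposal is correct and follows essentially the same route as the paper: your $\mathbf Q(z)^\top$ is exactly the paper's auxiliary function $\mathbf h(z)^\top:=(r(z)-1)^{-1}\mathbf g(z)^\top$, and the key step in both arguments is verifying that this rational function is bounded at infinity and has all its poles in the set $\Lambda$ of~\eqref{eq:Lambda}, so that $s_n(kA)\mathbf g(kA)^\top=(r(kA)^{n+1}-\id)\mathbf h(kA)^\top$ can be bounded by $C\rho_k(T)$. In fact, you spell out the origin/infinity cancellation in more detail than the paper does.
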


\begin{proof}
Since $\mathbf g(0)=\boldsymbol 0$, we can adapt the proof of Lemma \ref{lemma:4.2} to each of the components of the vector-valued function $\mathbf g$. The key step is to show that $\mathbf h(z)^\top:=(r(z)-1)^{-1} \mathbf g(z)$ is bounded at infinity and has all its poles in the set defined in \eqref{eq:Lambda} and therefore
\[
\| \mathbf h(k\,A)^\top\|_{\mathcal X^m \to\mathcal X}\le C \qquad \forall k\le k_0.
\]
Since the operator
$s_n(k \,\AA) \mathbf g(k\, \AA)^\top$
on the left-hand side of \eqref{eq:4.7} can be rewritten as $(r(kA)^{n+1}-\id)\mathbf h(k\,A)^\top$, the bound \eqref{eq:4.7} follows readily. 
\end{proof}

When dealing with Runge-Kutta methods that do not satisfy the additional Assumption~\ref{ass:1.3},
we still have the following result:

\begin{lemma}\label{lemma:4.4}
For $k_0>0$ taken as in Lemma \ref{lemma:4.2}, we can bound for all $k\le k_0$
\begin{equation}\label{eq:4.100}
\| r_{\ell,\beta}(k\,A)\|_{\mathcal X_\mu\to\mathcal X}\le C k^{\min\{\mu,p+1-\ell-\beta\}}
\end{equation}
for $\ell\le p$, $\beta\in \{0,1\}$ and $\mu \ge 0$. The constant $C$ depends on $M$, $\omega$, $k_0$, $\mu$, and the RK method. The estimate~(\ref{eq:4.100}) also holds for $\beta=-1$ if the method is stiffly accurate. Additionally
\begin{equation}\label{eq:4.8}
\|\mathbf g(k A)^\top\|_{\mathcal X^m\to\mathcal X}\le C, \qquad k\le k_0.
\end{equation}
\end{lemma}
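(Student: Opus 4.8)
The plan is to reprove the estimates \eqref{eq:4.100} and \eqref{eq:4.8} without invoking Assumption~\ref{ass:1.3}, which is what makes Lemma~\ref{lemma:4.2} and Lemma~\ref{lemma:4.3} work. The crucial difference is that here there is no summation factor $s_n(kA)=\sum_{j=0}^n r(kA)^j$ present, so I do not need to control the zeros of $r(z)-1$ on the imaginary axis. Instead I only deal with the single rational function $r_{\ell,\beta}$ (resp.\ the components of $\mathbf g$), whose poles are, by A-stability alone, contained in $\sigma(\rkA^{-1})\subset\C_+$. This is the key structural simplification: A-stability (Assumption~\ref{ass:1.2}) already guarantees that the relevant poles are bounded away from the imaginary axis, and we never have to exploit the cancellation from $r(z)-1$.

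First I would recall from Lemma~\ref{lemma:4.1} that $r_{\ell,\beta}$ is bounded at infinity and has a zero of order $p+1-\ell-\beta$ at $z=0$ (for $\beta\in\{0,1\}$, and also for $\beta=-1$ when the method is stiffly accurate). For an integer $\mu$ with $0\le\mu\le p+1-\ell-\beta$, I factor
\[
r_{\ell,\beta}(z)=z^{\mu}\,\widetilde q_{\ell,\beta,\mu}(z),
\qquad
\widetilde q_{\ell,\beta,\mu}(z):=z^{-\mu}r_{\ell,\beta}(z),
\]
so that $\widetilde q_{\ell,\beta,\mu}$ is still rational, bounded at infinity, and has all its poles in $\sigma(\rkA^{-1})\subset\C_+$ (no zero of $r-1$ is involved, so this holds under A-stability only). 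Choosing $k_0$ as in Lemma~\ref{lemma:4.2}, i.e.\ so that $k_0\omega<\min\{\Re\lambda:\lambda\in\sigma(\rkA^{-1})\}$, Proposition~\ref{prop:2.1} bounds $\|(\lambda\id-kA)^{-1}\|_{\XX\to\XX}$ uniformly for $\lambda\in\sigma(\rkA^{-1})$ and $k\le k_0$, and then \eqref{eq:2.3} yields $\|\widetilde q_{\ell,\beta,\mu}(kA)\|_{\XX\to\XX}\le C$. By Lemma~\ref{lemma:2.4}(b) we have $r_{\ell,\beta}(kA)x=k^{\mu}\widetilde q_{\ell,\beta,\mu}(kA)A^{\mu}x$ for $x\in\dom A^{\mu}$, which gives \eqref{eq:4.100} for integer $\mu\le p+1-\ell-\beta$. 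For larger integer $\mu$ the rate saturates, so I only bound the $\XX_{p+1-\ell-\beta}$ norm by the stronger $\XX_{\mu}$ norm, and for real $\mu$ I interpolate using \eqref{eq:interpolation_thm_est}, exactly as at the end of the proof of Lemma~\ref{lemma:4.2}.

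For \eqref{eq:4.8}, I argue componentwise on $\mathbf g(z)^\top=z\rkb^\top(\id-z\rkA)^{-1}$. Each component is rational, bounded at infinity (the degree of the numerator does not exceed that of the denominator), and has poles only in $\sigma(\rkA^{-1})\subset\C_+$; since $\mathbf g(0)=\mathbf 0$ this is in fact the case $\mu=0$ of the factorization above. The same resolvent bound and \eqref{eq:2.3} give $\|\mathbf g(kA)^\top\|_{\XX^m\to\XX}\le C$ for $k\le k_0$. I do not expect a genuine obstacle here; the only point requiring care is bookkeeping of the exponent $p+1-\ell-\beta$ (one higher than the $p-\ell-\beta$ appearing in Lemma~\ref{lemma:4.2}, precisely because the factor $s_n(kA)$ that consumed one order of cancellation is absent), and verifying that the stiffly accurate case $\beta=-1$ is covered by the corresponding clause of Lemma~\ref{lemma:4.1}.
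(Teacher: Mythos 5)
Your proposal is correct and follows essentially the same route as the paper: factor $r_{\ell,\beta}(z)=z^{\mu}q_{\ell,\beta,\mu}(z)$ with $q_{\ell,\beta,\mu}(z):=z^{-\mu}r_{\ell,\beta}(z)$ (regular at $0$ thanks to Lemma~\ref{lemma:4.1}, poles in $\sigma(\rkA^{-1})\subset\C_+$ by A-stability alone), bound $q_{\ell,\beta,\mu}(kA)$ uniformly via the resolvent estimate and \eqref{eq:2.3}, apply Lemma~\ref{lemma:2.4}, and finish by interpolation; the componentwise treatment of $\mathbf g$ for \eqref{eq:4.8} likewise matches the paper's adaptation of Lemma~\ref{lemma:4.3}. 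Your observation about the exponent $p+1-\ell-\beta$ versus $p-\ell-\beta$ (no order consumed by $s_n(kA)$) is exactly the right bookkeeping.
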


\begin{proof}
The argument to prove \eqref{eq:4.100} is very similar to that of Lemma \ref{lemma:4.2}. By interpolation it is clear that we just need to prove the result for any integer $\mu$ satisfying $0\le\mu\le p+1-\ell-\beta$. Consider then the rational function $q_{\ell,\beta,\mu}(z):=z^{-\mu}r_{\ell,\beta}(z)$, which is bounded at infinity and has all its poles in $\sigma(\mathcal Q^{-1})$ (see \eqref{eq:Lambda}). We can then use the same argument to prove \eqref{eq:4.5} for this redefined new function $q_{\ell,\beta,\mu}$. (Note that we do not use Assumption \ref{ass:1.3} in this argument.)  Using that $r_{\ell,\beta}(k\,A)=k^\mu q_{\ell,\beta,\mu}(k A) A^\mu$ in $\dom A^\mu$, the result follows. Stiff accuracy of the method is used in the case $\beta=-1$ when we apply Lemma \ref{lemma:4.1}, dealing with the zeros of $r_{\ell,-1}$.

The proof of \eqref{eq:4.8} is a similar adaptation of the proof of Lemma \ref{lemma:4.3}. 
\end{proof}

\subsection{Estimates on discrete convolutions}

The RK error will naturally induce several types of discrete convolutions that we will need to estimate separately. In all of them we will have the structure
\begin{equation}\label{eq:55.15}
\omega_0 =0, \qquad
	\omega_{n+1}:=r(kA)\omega_n+k\eta_n,\quad n\ge 0. 
\end{equation}
We first deal with the simplest cases.

\begin{lemma}\label{lemma:5.7}
For $nk\le T$, the sequence defined by \eqref{eq:55.15} can be bounded by
\[
\| \omega_n\|_{\mathcal X}\le C T \rho_k(T) \max_{j\le n} \| \eta_j\|_{\mathcal X}.
\]
If $\eta_n:=\mathbf g(kA)^\top \boldsymbol\xi_n$ for $\boldsymbol\xi_n\in \mathcal X^m$, then
\[
\|\omega_n\|_{\mathcal X}\le C T \rho_k(T) \max_{j\le n} \|\boldsymbol\xi_n\|_{\mathcal X^m}.
\]
\end{lemma}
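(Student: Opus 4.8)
The plan is to prove Lemma~\ref{lemma:5.7} by solving the recurrence \eqref{eq:55.15} explicitly and then applying the estimates already established. Unrolling \eqref{eq:55.15} from $\omega_0=0$ gives the discrete variation-of-constants formula
\[
\omega_n = k\sum_{j=0}^{n-1} r(kA)^{\,n-1-j}\,\eta_j,
\]
which is the natural discrete analogue of the Duhamel representation and is the backbone of both assertions.

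For the first bound I would simply take norms and use the definition \eqref{eq:def_rho_k} of $\rho_k(T)$ to control each power $\|r(kA)^{\,n-1-j}\|_{\mathcal X\to\mathcal X}\le \rho_k(T)$, since $(n-1-j)k\le nk\le T$. This yields
\[
\|\omega_n\|_{\mathcal X}\le k\sum_{j=0}^{n-1}\rho_k(T)\,\|\eta_j\|_{\mathcal X}
\le k\,n\,\rho_k(T)\max_{j\le n}\|\eta_j\|_{\mathcal X}
\le T\,\rho_k(T)\max_{j\le n}\|\eta_j\|_{\mathcal X},
\]
using $kn\le T$; this is the claimed estimate up to the constant $C$.

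For the second assertion, where $\eta_j=\mathbf g(kA)^\top\boldsymbol\xi_j$, the naive approach of bounding $\|\mathbf g(kA)^\top\|$ by \eqref{eq:4.8} and then summing would again work and give the stated rate, so I would substitute into the explicit formula to get $\omega_n=k\sum_{j=0}^{n-1} r(kA)^{\,n-1-j}\mathbf g(kA)^\top\boldsymbol\xi_j$ and estimate termwise using $\|r(kA)^{\,n-1-j}\|\le\rho_k(T)$ together with \eqref{eq:4.8}, absorbing the uniform bound on $\|\mathbf g(kA)^\top\|_{\mathcal X^m\to\mathcal X}$ into $C$. The main subtlety I anticipate is purely bookkeeping rather than conceptual: one must make sure that $\rho_k(T)$ correctly dominates all the intermediate powers $r(kA)^{\,n-1-j}$ for $0\le j\le n-1$ (which it does, since each exponent $n-1-j$ satisfies $(n-1-j)k\le T$), and that the factor $kn$ is correctly collapsed to $T$. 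I do not expect a genuine obstacle here; the lemma is the ``easy'' base case among the convolution estimates, and the harder work is deferred to the later lemmas that require the refined rational-function bounds of Lemma~\ref{lemma:4.2} and Lemma~\ref{lemma:4.3}.
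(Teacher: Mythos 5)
Your proposal is correct and follows essentially the same route as the paper: the paper's proof consists precisely of writing the recurrence as the discrete convolution $\omega_{n+1}=k\sum_{j=0}^n r(kA)^{n-j}\eta_j$ and then invoking the definition of $\rho_k(T)$ for the first bound and \eqref{eq:4.8} for the second. You have merely written out the termwise estimation and the collapse of $kn\le T$ that the paper leaves implicit.
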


\begin{proof}
The sequence defined by the recurrence \eqref{eq:55.15} can be written as the discrete convolution
\begin{equation}\label{eq:55.16}
\omega_{n+1}=k\sum_{j=0}^n r(kA)^{n-j}\eta_j.
\end{equation}
The estimates then follow from the definition of $\rho_k(T)$ and from \eqref{eq:4.8}.
\end{proof}

The next estimate is related to the consistency error of the RK method in the sense of how the RK method approximates derivatives at the stage level. We introduce the operator
\begin{equation}\label{eq:4.9}
\boldsymbol D^k(y;t):=y(t+k\rkc)-y(t)\mathbf 1-k \rkA \dot y(t+k\rkc).
\end{equation}

\begin{lemma}\label{lemma:4.5}
If $y\in \mathcal C^{p+1}([0,T];\mathcal X)$, then
\begin{equation}\label{eq:4.10}
\boldsymbol D^k(y;t)=
\sum_{j=q+1}^p \frac{k^j}{j!} (\rkc^j-j\rkA\rkc^{j-1}) y^{(j)}(t)+\boldsymbol R^k(t),
\end{equation}
where
\begin{equation}\label{eq:4.11}
\| \boldsymbol R^k(t)\|_{\mathcal X^m}\le C k^{p+1} \max_{t\le \tau\le t+k} \| y^{(p+1)}(\tau)\|_{\mathcal X}.
\end{equation}
\end{lemma}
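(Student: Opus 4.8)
The plan is to prove the identity componentwise via Taylor expansion with integral remainder and then reassemble the vector form. Writing the $i$-th component of \eqref{eq:4.9} as $(\boldsymbol D^k(y;t))_i = y(t+kc_i) - y(t) - k\sum_{\ell=1}^m \rkA_{i\ell}\dot y(t+kc_\ell)$, I would expand $y(t+kc_i)$ in a Taylor polynomial of degree $p$ about $t$ and, separately, expand each $\dot y(t+kc_\ell)$ in a Taylor polynomial of degree $p-1$ about $t$. The choice of degrees is dictated by the extra factor $k$ multiplying the $\rkA$-term: a degree-$(p-1)$ expansion of $\dot y$ leaves a remainder of size $\bigO(k^p)$, which after multiplication by $k$ becomes of the desired order $\bigO(k^{p+1})$, matching the degree-$p$ remainder coming from $y(t+kc_i)$.

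The key algebraic step is to collect the polynomial parts. The expansion of $y(t+kc_i)-y(t)$ contributes $\sum_{j=1}^p \frac{k^j}{j!} c_i^j\, y^{(j)}(t)$, while $k\sum_\ell \rkA_{i\ell}\dot y(t+kc_\ell)$ contributes, after shifting the summation index and using $\tfrac{1}{(j-1)!} = \tfrac{j}{j!}$ together with $\sum_\ell \rkA_{i\ell}c_\ell^{\,j-1}=(\rkA\rkc^{j-1})_i$, the terms $\sum_{j=1}^p \frac{k^j}{j!}\, j\,(\rkA\rkc^{j-1})_i\, y^{(j)}(t)$. Subtracting and passing back to vector notation yields, for each $j$, the coefficient $\frac{k^j}{j!}\big(\rkc^j - j\rkA\rkc^{j-1}\big)$. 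By the stage order conditions \eqref{eq:stage_order_condition}, $\rkc^j = j\rkA\rkc^{j-1}$ for $1\le j\le q$, so all these coefficients vanish and the sum effectively starts at $j=q+1$, which is exactly the leading term of \eqref{eq:4.10}.

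For the remainder $\boldsymbol R^k(t)$, I would keep the two Taylor remainders in integral form. The remainder from $y(t+kc_i)$ is bounded in $\mathcal X$ by $\frac{(kc_i)^{p+1}}{(p+1)!}\max_{t\le\tau\le t+kc_i}\|y^{(p+1)}(\tau)\|_{\mathcal X}$, and the $k$-scaled remainder from $\dot y(t+kc_\ell)$ by $\frac{k(kc_\ell)^{p}}{p!}\max_{t\le\tau\le t+kc_\ell}\|y^{(p+1)}(\tau)\|_{\mathcal X}$; since the nodes satisfy $0\le c_i\le 1$, each interval $[t,t+kc_i]$ is contained in $[t,t+k]$. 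Summing over $i$ and $\ell$, with the finitely many coefficients $c_i$ and $\rkA_{i\ell}$ absorbed into a constant $C$ depending only on the method, then gives \eqref{eq:4.11}.

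The argument is essentially careful bookkeeping; the only points requiring attention are choosing the two expansion orders so that both remainders are genuinely $\bigO(k^{p+1})$, and correctly invoking the stage order conditions to shift the lower summation limit to $q+1$. I do not expect any serious obstacle beyond this.
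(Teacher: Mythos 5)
Your proposal is correct and follows essentially the same route as the paper's proof: Taylor expansion about $t$ (degree $p$ for $y(t+kc_i)$, degree $p-1$ for $\dot y(t+kc_\ell)$, which is exactly the form of the remainder $\dot r$ used in the paper), cancellation of the coefficients $\rkc^j-j\rkA\rkc^{j-1}$ for $j\le q$ via the stage order conditions \eqref{eq:stage_order_condition}, and an integral-remainder bound giving $\bigO(k^{p+1})$. No gaps.
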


\begin{proof}
We will prove the result for scalar-valued functions, since the extension for Banach-space valued functions is straightforward. We use a Taylor expansion
\[
y(t+\tau)=\sum_{j=0}^p \frac{\tau^j}{j!} y^{(j)}(t)+r(\tau;t),
\] 
where
\[
r(\tau;t):=\frac{\tau^{p+1}}{p!}\int_0^1 (1-s)^p y^{(p+1)}(t+\tau s)\mathrm ds.
\]
Plugging this decomposition in the definition of $\boldsymbol D^k(y;t)$, we obtain
\[
\boldsymbol D^k(y;t)
	= \sum_{j=1}^p \frac{k^j}{j!}  (\rkc^j-j\rkA\rkc^{j-1}) y^{(j)}(t)
		+ r(k\rkc;t)-k\rkA \dot r(k\rkc;t)
\]
Defining $\boldsymbol R^k(t):=r(k\rkc;t)-k\rkA \dot r(k\rkc;t)$ and using \eqref{eq:subeqs:stage_order_condition}, we have \eqref{eq:4.10}. Since
\[
  \dot r(\tau;t)=
  \frac{\tau^p}{(p-1)!}
   \int_0^1 (1-s)^{p-1}y^{(p+1)}(t+\tau s)\mathrm ds \Big),
\]
the bound \eqref{eq:4.11} is straightforward.  
\end{proof}

We are almost ready for the two main lemmas of this section, the first one without Assumption \ref{ass:1.3} and the second one with it. These results and their proofs follow \cite[Theorem 1 and 2]{mallo_palencia_optimal_orders_rk}, where only the case $\beta=0$ is covered. 

\begin{lemma}\label{lemma:5.8}
  Let $y\in\mathcal C^{p+1}([0,T];\mathcal X)\cap \mathcal C^p([0,T];\mathcal X_\mu)$, and we consider the sequence $\omega_n$ defined by the recurrence \eqref{eq:55.15} with $\eta_n:=k^{-1}\mathbf g(kA)^\top (k\rkA)^\beta \boldsymbol D(y;t_n)$. Then there exists a constant $k_0>0$ depending
    only on $\omega$ from~(\ref{eq:residual_estimate_semigroups}) and the RK-method such that
    for $n k \leq T$
\[
\|\omega_n\|_{\mathcal X}
\le C T \rho_k(T) k^{\min\{q+\mu+\beta,p+\beta,p\}} 
	\left(\sum_{j=q+1}^p \| y^{(j)}\|_{T,\mu}
		+\| y^{(p+1)}\|_{T,0}\right),
\]
for $\beta\in \{0,1\}$, $\mu \ge 0$, and $k\le k_0$. The estimate also holds for  $\beta=-1$ if the method is stiffly accurate.
If $\omega=0$, then $k_0$ can be chosen arbitrarily.
\end{lemma}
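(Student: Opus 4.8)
The plan is to feed the Taylor expansion of Lemma~\ref{lemma:4.5} into the forcing $\eta_n$, solve the recurrence \eqref{eq:55.15} explicitly through the discrete convolution \eqref{eq:55.16}, and then estimate the leading terms and the remainder separately. The crucial device is the smoothing estimate \eqref{eq:4.100} of Lemma~\ref{lemma:4.4}, which converts the assumed $\mathcal X_\mu$-regularity of $y$ into an extra factor $k^\mu$ and is precisely what lets the rate exceed the stage order $q$.

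First I would record, from the definitions \eqref{eq:4.1} and \eqref{eq:5.defg}, that $\mathbf g(z)^\top\rkA^\beta(\rkc^\ell-\ell\rkA\rkc^{\ell-1})=r_{\ell,\beta}(z)$ is a \emph{scalar} rational function. Inserting the decomposition \eqref{eq:4.10} of $\boldsymbol D^k(y;t_n)$ into $\eta_n=k^{-1}\mathbf g(kA)^\top(k\rkA)^\beta\boldsymbol D^k(y;t_n)$ and writing $(k\rkA)^\beta=k^\beta\rkA^\beta$, the data splits as $\eta_n=\eta_n^{\mathrm{lead}}+\eta_n^{\mathrm{rem}}$ with
\[
\eta_n^{\mathrm{lead}}=\sum_{j=q+1}^p\frac{k^{j+\beta-1}}{j!}\,r_{j,\beta}(kA)\,y^{(j)}(t_n),
\qquad
\eta_n^{\mathrm{rem}}=\mathbf g(kA)^\top\big(k^{\beta-1}\rkA^\beta\boldsymbol R^k(t_n)\big).
\]
By linearity of \eqref{eq:55.15}, the solution $\omega_n$ splits accordingly, and I would treat the two pieces independently.

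For the remainder I would invoke the second bound of Lemma~\ref{lemma:5.7} with $\boldsymbol\xi_n:=k^{\beta-1}\rkA^\beta\boldsymbol R^k(t_n)$; combined with the estimate \eqref{eq:4.11} on $\boldsymbol R^k$ this yields a contribution bounded by $CT\rho_k(T)k^{p+\beta}\|y^{(p+1)}\|_{T,0}$. For each leading term I would expand $\omega_n$ by \eqref{eq:55.16}, commute the scalar operator $r_{j,\beta}(kA)$ past the powers $r(kA)^{n-1-i}$ (both are functions of $A$), and estimate the chain $r(kA)^{n-1-i}r_{j,\beta}(kA)y^{(j)}(t_i)$ factor by factor: the power of $r(kA)$ by $\rho_k(T)$ via \eqref{eq:def_rho_k}, the factor $r_{j,\beta}(kA)$ by $Ck^{\min\{\mu,\,p+1-j-\beta\}}$ via \eqref{eq:4.100}, and $\|y^{(j)}(t_i)\|_{\mathcal X_\mu}\le\|y^{(j)}\|_{T,\mu}$. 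Summing the $n\le T/k$ terms and collecting the prefactor $k^{j+\beta}$ produces the exponent $j+\beta-1+\min\{\mu,\,p+1-j-\beta\}=\min\{j+\mu+\beta-1,\,p\}$, whose smallest value over $q+1\le j\le p$ is attained at $j=q+1$ and equals $\min\{q+\mu+\beta,\,p\}$, with overall factor $T\rho_k(T)$ and coefficient $\|y^{(j)}\|_{T,\mu}$.

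Combining the two pieces, the leading terms contribute exponent $\min\{q+\mu+\beta,\,p\}$ and the remainder contributes $p+\beta$, so their minimum is exactly $\min\{q+\mu+\beta,\,p+\beta,\,p\}$ as claimed (for $\beta\in\{0,1\}$ this collapses to $\min\{q+\mu+\beta,p\}$, and in the stiffly accurate case $\beta=-1$ to $\min\{q+\mu-1,p-1\}$). The admissible $k_0$ is the one furnished by Lemma~\ref{lemma:4.4}, which depends only on $\omega$ and the method and may be taken arbitrarily large when $\omega=0$. The single point requiring care is the $\mathcal X_\mu\to\mathcal X$ smoothing of $r_{j,\beta}(kA)$; for $\beta=-1$ this relies on stiff accuracy through the extra order of vanishing of $r_{\ell,-1}$ at $z=0$ recorded in Lemma~\ref{lemma:4.1}.
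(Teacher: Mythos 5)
Your proposal is correct and follows essentially the same route as the paper: insert the expansion of Lemma~\ref{lemma:4.5} into $\eta_n$, apply the smoothing bounds \eqref{eq:4.100} and \eqref{eq:4.8} of Lemma~\ref{lemma:4.4} (with stiff accuracy entering only through Lemma~\ref{lemma:4.1} for $\beta=-1$), and conclude with the discrete-convolution bound of Lemma~\ref{lemma:5.7}; the exponent bookkeeping $\min\{j+\mu+\beta-1,p\}$ minimized at $j=q+1$ matches the paper exactly. The only (immaterial) difference is that the paper bounds the full forcing $e^k_\beta(t)=k\eta_n$ in a single estimate and then invokes the first part of Lemma~\ref{lemma:5.7}, whereas you split leading terms from the remainder and treat the latter with the second part of that lemma.
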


\begin{proof}
Considering the functions
\begin{equation}\label{eq:4.16}
e^k_\beta(t)
	:=\mathbf g(kA)^\top (k\rkA)^\beta \boldsymbol D^k(y;t), \qquad \beta\in \{-1,0,1\},
\end{equation}
we have that $\eta_n=k^{-1}e^k_\beta(t_n)$.
Using Lemmas \ref{lemma:4.4} and \ref{lemma:4.5} (recall the definition of $r_{j,\beta}$ in \eqref{eq:4.1}), we can bound
\begin{alignat*}{6}
\| e^k_\beta(t)\|_{\mathcal X}
	\le & \sum_{j=q+1}^p \frac{k^{j+\beta}}{j!} 
		\norm{r_{j,\beta}(kA)y^{(j)}(t)}_{\mathcal X}
		+k^\beta\|\mathbf g(kA)^\top \rkA^\beta \boldsymbol R^k(t)\|_{\mathcal X} \\
	\lesssim & \sum_{j=q+1}^p k^{\min\{\mu+j+\beta,p+1\}}  
	\| y^{(j)}(t)\|_{\mathcal X_\mu}
			+ k^{\beta} \| \boldsymbol R^k(t)\|_{\mathcal X^m}.
\end{alignat*}
By \eqref{eq:4.11}, we then have
\begin{equation}\label{eq:4.166}
\| e^k_\beta(t)\|_{\mathcal X}
	\le C k^{1+\min\{q+\mu+\beta,p+\beta,p\}} 
	\left(\sum_{j=q+1}^p \| y^{(j)}(t)\|_{\mathcal X_\mu}
		+\max_{t\le \tau\le t+k}\| y^{(p+1)}(\tau)\|_{\mathcal X}\right),
\end{equation}
and  the result then follows from Lemma \ref{lemma:5.7}.
\end{proof}

\begin{lemma}\label{lemma:5.9}
  Let the RK method satisfy Assumption \ref{ass:1.3}. Let $y\in\mathcal \mathcal C^{p+1}([0,T];\mathcal X_\mu)$.
  Then there exists a constant $k_0>0$ depending
    only on $\omega$ from~(\ref{eq:residual_estimate_semigroups}) and the RK-method such that
  the sequence $\omega_n$ defined in Lemma \ref{lemma:5.8} satisfies
\[
\| \omega_n\|_{\mathcal X}
\le  C (1+T) \rho_k(T) k^{\min\{q+\mu+\beta+1,p\}} 
	\left(\sum_{j=q+1}^{p+1} \| y^{(j)}\|_{T,\mu} \right)
\]
for $\beta\in \{0,1\}$, $\mu \ge 0$, and $k\le k_0$.
If the method is stiffly accurate and $y\in\mathcal C^{p+2}([0,T];\mathcal X)\cap \mathcal C^{p+1}([0,T];\mathcal X_\mu)$, then for $\beta=-1$
\[
\| \omega_n\|_{\mathcal X}
\le  C (1+T) \rho_k(T) k^{\min\{q+\mu,p\}} 
	\left(\sum_{j=q+1}^{p+1} \| y^{(j)}\|_{T,\mu}
		+\| y^{(p+2)}\|_{T,0}\right).
            \]
            If $\omega=0$, then $k_0$ can be chosen arbitrarily.
\end{lemma}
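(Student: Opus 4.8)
The plan is to start from the convolution representation \eqref{eq:55.16} of the recurrence \eqref{eq:55.15}. Inserting $\eta_n = k^{-1}e^k_\beta(t_n)$ with $e^k_\beta$ as in \eqref{eq:4.16} gives $\omega_n = \sum_{j} r(kA)^{n-1-j}\,e^k_\beta(t_j)$. As in the proof of Lemma~\ref{lemma:5.8}, I would expand $e^k_\beta$ by combining Lemma~\ref{lemma:4.5} with the identity $\mathbf g(kA)^\top\rkA^\beta(\rkc^\ell-\ell\rkA\rkc^{\ell-1})=r_{\ell,\beta}(kA)$, obtaining
\[
e^k_\beta(t)=\sum_{\ell=q+1}^p \frac{k^{\ell+\beta}}{\ell!}\,r_{\ell,\beta}(kA)\,y^{(\ell)}(t)+k^\beta\,\mathbf g(kA)^\top\rkA^\beta\boldsymbol R^k(t).
\]
The decisive difference from Lemma~\ref{lemma:5.8} is that here I must \emph{not} bound $e^k_\beta$ termwise; instead I exploit the sum over $j$ to gain one extra power of $k$ through the partial-sum operators $s_n(kA)$ from \eqref{eq:55.6}, on which Assumption~\ref{ass:1.3} yields the sharp bound in Lemma~\ref{lemma:4.2}.

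The core step is summation by parts (Abel summation). For each fixed $\ell$, after reindexing $i=n-1-j$ the inner sum reads $\sum_i r(kA)^i r_{\ell,\beta}(kA)\,y^{(\ell)}(t_{n-1-i})$; Abel summation with the partial sums $s_i(kA)=\sum_{l\le i}r(kA)^l$ turns this into a boundary contribution $s_{n-1}(kA)r_{\ell,\beta}(kA)y^{(\ell)}(t_0)$ plus a sum $\sum_i s_i(kA)r_{\ell,\beta}(kA)\big[y^{(\ell)}(t_{\cdot})-y^{(\ell)}(t_{\cdot+1})\big]$ of consecutive differences. To each factor $s_i(kA)r_{\ell,\beta}(kA)$ I apply Lemma~\ref{lemma:4.2} (valid for $\beta\in\{0,1\}$, and for $\beta=-1$ under stiff accuracy), which gives the gain $k^{\min\{\mu,p-\ell-\beta\}}\rho_k(T)$ uniformly in $i$ since $ik\le T$. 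The boundary term contributes $\|y^{(\ell)}(t_0)\|_{\mathcal X_\mu}\le\|y^{(\ell)}\|_{T,\mu}$, each difference is $\le k\|y^{(\ell+1)}\|_{T,\mu}$ by the fundamental theorem of calculus, and the $O(T/k)$ differences sum to an $O(T)$ factor. Together with the prefactor $k^{\ell+\beta}$, the $\ell$-term is bounded by $C(1+T)\rho_k(T)\,k^{\min\{\ell+\beta+\mu,p\}}\big(\|y^{(\ell)}\|_{T,\mu}+\|y^{(\ell+1)}\|_{T,\mu}\big)$; since $\ell\ge q+1$ the smallest exponent is $\min\{q+\mu+\beta+1,p\}$ (which specializes to $\min\{q+\mu,p\}$ for $\beta=-1$), and the derivatives that appear fill out the sum $\sum_{\ell=q+1}^{p+1}\|y^{(\ell)}\|_{T,\mu}$.

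It remains to treat the remainder $k^\beta\mathbf g(kA)^\top\rkA^\beta\boldsymbol R^k$. For $\beta\in\{0,1\}$ (first assertion, where only $y\in\mathcal C^{p+1}([0,T];\mathcal X_\mu)$ is available, so $\boldsymbol R^k$ cannot be differentiated) I would bound its convolution directly as in Lemma~\ref{lemma:5.7}, using $\|\mathbf g(kA)^\top\|\le C$ from \eqref{eq:4.8} and $\|\boldsymbol R^k(t)\|\le Ck^{p+1}\|y^{(p+1)}\|_{T,0}$ from \eqref{eq:4.11}; the $O(T/k)$ terms yield $C(1+T)\rho_k(T)k^{p+\beta}\|y^{(p+1)}\|_{T,0}$, whose rate $k^{p+\beta}$ dominates $k^{\min\{q+\mu+\beta+1,p\}}$ and whose norm is absorbed into the $\|y^{(p+1)}\|_{T,\mu}$ term. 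For the second assertion ($\beta=-1$, stiffly accurate) this direct estimate only gives $k^{p-1}$, which is insufficient once $q+\mu\ge p$; this is exactly why the stronger regularity $y\in\mathcal C^{p+2}([0,T];\mathcal X)$ is assumed. There I would apply summation by parts to the remainder as well: the boundary term is controlled by $s_{n-1}(kA)\mathbf g(kA)^\top$ through Lemma~\ref{lemma:4.3}, giving $C\rho_k(T)k^{p}\|y^{(p+1)}\|_{T,0}$, while the differences $\boldsymbol R^k(t_{\cdot})-\boldsymbol R^k(t_{\cdot+1})=-\int\dot{\boldsymbol R}^k$ are $O(k^{p+2})\|y^{(p+2)}\|_{T,0}$ (applying the estimate \eqref{eq:4.11} to $\dot y$), and the $O(T/k)$ of them sum to $CT\rho_k(T)k^{p}\|y^{(p+2)}\|_{T,0}$, producing the extra $\|y^{(p+2)}\|_{T,0}$ term in the statement.

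The constant $k_0$ is inherited from Lemma~\ref{lemma:4.2} (and Lemma~\ref{lemma:4.3}), hence depends only on $\omega$ and the method, and can be taken arbitrarily large when $\omega=0$. The main obstacle I anticipate is the summation-by-parts bookkeeping: one must carefully track the boundary term versus the telescoped differences, verify that Lemma~\ref{lemma:4.2} applies uniformly in the summation index $i$ (which hinges on $ik\le T$), and, in the $\beta=-1$ case, justify differentiating $\boldsymbol R^k$ so that the difference estimate picks up $y^{(p+2)}$ at the correct order rather than losing a power of $k$.
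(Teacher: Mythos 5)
Your proposal is correct and follows essentially the same route as the paper: an Abel summation against the partial-sum operators $s_n(kA)$, the bounds of Lemmas~\ref{lemma:4.2} and~\ref{lemma:4.3} on $s_n(kA)r_{\ell,\beta}(kA)$ and $s_n(kA)\mathbf g(kA)^\top$, and difference quotients trading one power of $k$ for one extra derivative of $y$ (with the separate, cruder treatment of the $\boldsymbol R^k$ term for $\beta\in\{0,1\}$ exactly as the paper remarks). The only difference is organizational: the paper sums by parts on $e^k_\beta$ as a whole and uses $e^k_\beta(t)-e^k_\beta(t-k)=\mathbf g(kA)^\top(k\rkA)^\beta\boldsymbol D^k(y-y(\cdot-k);t)$, whereas you expand via Lemma~\ref{lemma:4.5} first and sum by parts term by term, which is equivalent by linearity.
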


\begin{proof}
We will use the function $e^k_\beta$ defined in \eqref{eq:4.16} and 
\begin{alignat}{6}
\nonumber
\omega_n=& \sum_{j=0}^n r(kA)^{n-j} e^k_\beta(t_j)\\
\label{eq:5.19}
= & s_n(kA) e^k_\beta(t_0) +\sum_{j=1}^n s_{n-j}(kA)(e^k_\beta(t_j)-e^k_\beta(t_{j-1}))  
	+ e^k_\beta(t_n),
\end{alignat}
an expression involving the rational functions $s_n$ defined in \eqref{eq:55.6} (recall that $s_0=1$).
We first apply Lemmas \ref{lemma:4.2}, \ref{lemma:4.3} and \ref{lemma:4.5} to estimate
\begin{alignat*}{6}
\| s_n(kA) e^k_\beta(t)\|_{\mathcal X}
	\le & \sum_{j=q+1}^p \frac{k^{j+\beta}}{j!} 
      \| s_n(kA)r_{j,\beta}(kA)\|_{\mathcal X_\mu\to\mathcal X} \| y^{(j)}(t)\|_{\mathcal X_\mu}
      \\
		& \qquad+ C k^\beta \| s_n(kA)\mathbf g(kA)^\top\|_{\mathcal X^m\to\mathcal X} 
			\|\boldsymbol R^k(t)\|_{\mathcal X^m}\\
	\lesssim & \rho_k(T) \sum_{j=q+1}^p k^{\min\{j+\mu+\beta,p\}}\| y^{(j)}(t)\|_{\mathcal X_\mu} \\
			 & \qquad + \rho_k(T) k^{\beta+p+1} \max_{t\le \tau\le t+k} \| y^{(p+1)}(\tau)\|_{\mathcal X}\\
	\lesssim & \rho_k(T) k^{1+\min\{q+\mu+\beta,p+\beta,p-1\}} 
			\left(\sum_{j=q+1}^p \| y^{(j)}(t)\|_{\mathcal X_\mu}
				+\max_{t\le \tau\le t+k} \| y^{(p+1)}(\tau)\|_{\mathcal X}\right).
\end{alignat*}
Since 
\[
e^k_\beta(t)-e^k_\beta(t-k)
=\mathbf g(kA)^\top (k\rkA)^\beta\boldsymbol D^k(y-y(\cdot-k);t),
\]
and using that $\norm{y^{j}(t)-y^{(j)}(t-k)}_{\XX_{\mu}} \leq k\max_{t-k\le \tau\le t+k}\norm{y^{(j+1)}}_{\XX_{\mu}}$,
the analogous computation to the above bound, but using $y-y(\cdot-k)$ as data  implies
\begin{alignat*}{6}
& \| s_n(kA)(e^k_\beta(t)-e^k_\beta(t-k)\|_{\mathcal X} \\
& \lesssim  \rho_k(T) k^{1+\min\{q+1+\mu+\beta,p\}}
		\left( \sum_{j=q+2}^{p+1} \max_{t-k\le \tau\le t} \| y^{(j)}(\tau)\|_{\mathcal X_\mu}
			+\max_{t-k\le \tau\le t+k}\| y^{(p+2)}(\tau)\|_{\mathcal X}\right),
                    \end{alignat*}
and therefore
\begin{alignat*}{6}
& \hspace{-1cm}
	\sum_{j=1}^n \| s_{n-j}(kA)(e^k_\beta(t_j)-e^k_\beta(t_{j-1}))\|_{\mathcal X} \\
	\lesssim & \rho_k(T) t_n k^{\min\{q+1+\mu+\beta,p\}}
		\left(\sum_{j=q+2}^{p+1} \| y^{(j)}\|_{t_n,\mu}
		+\| y^{(p+2)}\|_{t_{n+1},0}\right).
\end{alignat*}
Note that if $\beta\in \{0,1\}$ we can make a simpler estimate for
  the term  originating from $\mathbf{R}^k$, (i.e., the one containing the highest derivative)
  using less regularity for $y$ by not taking advantage of
the difference between $y^{(p+1)}(t_j)$ and $y^{(p+1)}(t_{j-1})$ and thus end up requiring less regularity.  
Using the estimate \eqref{eq:4.166} for the last term in \eqref{eq:5.19},
we have thereby already derived estimates for all three terms in \eqref{eq:5.19}. 
\end{proof}

\section{Proofs}
\label{sect:proofs}

The two different cases (with or without Assumption \ref{ass:1.3}) will be collected by using the parameter
\begin{equation}\label{eq:6.0}
\alpha:=
\begin{cases} 
  1, & \mbox{if Assumption \ref{ass:1.3} holds}, \\
  0, & \mbox{otherwise}.
\end{cases}
\end{equation}

\subsection{Proof of Theorem \ref{theorem:3.1}}
Recall that $u$ solves (\ref{eq:1.1}) with $u(0)=0$. The functions $\Xi$ and $F$ are
  the given boundary and volume data.
If $\Gamma:=\partial^{-1} \Xi$ and $H:=\partial^{-1}F$, then $x=\partial^{-1} u$ solves
\begin{align}
\dot x(t)=A_\star x(t)+H(t), \quad t>0, \qquad Bx(t)=\Gamma(t), \qquad x(0)=0. \label{eq:6.1}
\end{align}
On the other hand, $\{ X^k_n\}=(\partial^k)^{-1} \{ U^k_n\}$  solves by  Proposition~\ref{prop:44.5}:
\begin{subequations}
\begin{alignat}{6}
\label{eq:6.2a}
X_n^k =
	& \ones  x_n^k + k(\rkA\otimes A_\star)  X^k_n+ k\rkA H^k_n,\\
\label{eq:6.2b}
(I\otimes B) X_n^k =
	& \Gamma_n^k,\\
\label{eq:6.2c}
x_{n+1}^k =
	& x_n^k + k(\rkb^\top\otimes A_\star) X^k_n + k \rkb^\top H^k_n.
\end{alignat}
\end{subequations}

Before we can estimate the difference between the functions $x$ and $x_n^k$, we need one final lemma.
\begin{lemma}
  \label{lemma:u_with_boundary_corrections_is_regular}

  Let $x$ solve
  \begin{align}
    \dot x(t)=A_\star x(t)+H(t), \quad t>0, \qquad Bx(t)=\Gamma(t), \qquad x(0)=0. \label{eq:6.1_copy}
  \end{align}

  Assume that for some $\mu \ge 0$ we have
  \[
    x\in \mathcal C^{p+1}([0,T];\mathcal X_\mu), \quad
    H \in \mathcal C^p([0,T];\mathcal X_\mu), \quad
    \lifting \Gamma \in \mathcal C^p([0,T];\mathcal X_\mu).
  \]

  Then $x - \lifting \Gamma \in \mathcal C^p([0,T];\mathcal X_{\mu+1})$.
\end{lemma}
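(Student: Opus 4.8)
The plan is to work with the corrected unknown $w := x - \lifting\Gamma$, which is the quantity that gains a full power of the operator. First I would record two structural consequences of Assumption~\ref{ass:1.1}. Since $\lifting$ is a right inverse of $B$, we have $Bw(t) = Bx(t) - B\lifting\Gamma(t) = \Gamma(t) - \Gamma(t) = 0$, so $w(t) \in \ker B = \dom A$ for every $t$ and hence $A_\star w(t) = A w(t)$. Moreover, since $\mathrm{range}\,\lifting \subset \ker(I - A_\star)$, we have $A_\star \lifting\Gamma(t) = \lifting\Gamma(t)$.

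Combining these identities with the differential equation $\dot x = A_\star x + H$, I would compute
\[
A w = A_\star w = A_\star x - A_\star\lifting\Gamma = (\dot x - H) - \lifting\Gamma .
\]
Then I read off the time regularity: $x \in \mathcal C^{p+1}([0,T];\mathcal X_\mu)$ gives $\dot x \in \mathcal C^p([0,T];\mathcal X_\mu)$, while $H,\lifting\Gamma \in \mathcal C^p([0,T];\mathcal X_\mu)$ by hypothesis, so the right-hand side lies in $\mathcal C^p([0,T];\mathcal X_\mu)$, i.e.\ $Aw \in \mathcal C^p([0,T];\mathcal X_\mu)$. Since also $w \in \mathcal C^p([0,T];\mathcal X_\mu)$ (both $x$ and $\lifting\Gamma$ are), it remains only to promote the pointwise statement ``$w \in \mathcal X_\mu$ with $Aw \in \mathcal X_\mu$'' to $w \in \mathcal X_{\mu+1}$, with control of the $\mathcal C^p$-in-time dependence.

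For this upgrade I would use the Sobolev tower structure of Definition~\ref{def:2.4}. Fix a real $\lambda > \omega$, so that $\lambda \in \rho(A)$ by Proposition~\ref{prop:2.1}. On the integer levels the commutation of $A^n$ with the resolvent shows that $(\lambda - A)^{-1}$ maps $\mathcal X_n$ boundedly into $\mathcal X_{n+1}$ and that $(\lambda - A)$ maps $\mathcal X_{n+1}$ boundedly into $\mathcal X_n$; being mutually inverse, $(\lambda - A):\mathcal X_{n+1}\to\mathcal X_n$ is an isomorphism. Interpolating with the estimate~\eqref{eq:interpolation_thm_est} extends this to a bounded map $(\lambda - A)^{-1}: \mathcal X_\mu \to \mathcal X_{\mu+1}$ for all real $\mu \ge 0$. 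Writing $w = (\lambda - A)^{-1}(\lambda w - A w)$ with $\lambda w - Aw \in \mathcal C^p([0,T];\mathcal X_\mu)$, and using that a fixed bounded linear operator maps $\mathcal C^p([0,T];\mathcal X_\mu)$ into $\mathcal C^p([0,T];\mathcal X_{\mu+1})$, I obtain $w \in \mathcal C^p([0,T];\mathcal X_{\mu+1})$, which is the claim.

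The step I expect to be the main obstacle is this last one for non-integer $\mu$: one must ensure that ``$w\in\dom A$ with $Aw\in\mathcal X_\mu$'' really characterizes membership in the interpolated space $\mathcal X_{\mu+1}$, which is why I route the argument through the resolvent isomorphism and the interpolation bound~\eqref{eq:interpolation_thm_est} rather than through powers of $A$ directly. By contrast, the algebraic identities of the first paragraph are routine once $B\lifting = I$ and $\mathrm{range}\,\lifting \subset \ker(I - A_\star)$ are invoked.
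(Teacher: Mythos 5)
Your proof is correct and follows essentially the same route as the paper: set $w = x - \lifting\Gamma$, observe $Bw=0$ so that $w(t)\in\dom A$, and compute $Aw = \dot x - H - \lifting\Gamma \in \mathcal C^p([0,T];\mathcal X_\mu)$ using $\mathrm{range}\,\lifting\subset\ker(I-A_\star)$. The paper asserts the final conclusion $w\in\mathcal C^p([0,T];\mathcal X_{\mu+1})$ without further comment, so your explicit upgrade via the resolvent isomorphism $(\lambda-A)^{-1}:\mathcal X_\mu\to\mathcal X_{\mu+1}$ obtained from the integer levels and the interpolation bound \eqref{eq:interpolation_thm_est} is a correct (and welcome) justification of the one step the paper leaves implicit for non-integer $\mu$.
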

\begin{proof}
  We set $y:=x - \lifting \Gamma$.
  By assumption we have $y \in C^p([0,T];\mathcal X_{\mu})$
  and $B\left(x- \lifting \Gamma\right)=0$.
  Since
  $x \in \dom(\AA_\star)$ and $\operatorname{range}{\lifting}\subset \dom(\AA_\star)$
  this implies $y(t) \in \dom(\AA)$ for all $t \in [0,T]$.
  We further calculate using~\eqref{eq:6.1_copy}
  and $\operatorname{range}\lifting \subseteq \operatorname{ker}\id-\AA_\star$:
  \begin{align*}
    \AA y
    &= \AA_{\star} x - \AA_{\star} \lifting \Gamma
      = \dot{x} - H - \lifting \Gamma.
  \end{align*}
  Each of the contributions is assumed in $\mathcal C^p([0,T];\mathcal X_{\mu})$,
  thus  $y \in \mathcal C^p([0,T];\mathcal X_{\mu+1})$.
\end{proof}

 We will need the sequences $\{ \gamma_n^k\}$ and $\{ h_n^k\}$ with the scalar parts of the computations of $\{\Gamma^k_n\}$ and $\{H^k_n\}$ respectively,  namely (see Lemma \ref{lemma:2.6})
\begin{subequations}
\begin{alignat}{6}
\label{eq:6.3a}
\gamma^k_0:=0,& \qquad \gamma^k_n=\gamma^k_{n-1}+k\rkb^\top\Xi(t_n+k\rkc),\\
\label{eq:6.3b}
h^k_0:=0,& \qquad h^k_n=h^k_{n-1}+k\rkb^\top F(t_n+k\rkc).
\end{alignat}
\end{subequations}

We then consider
\[
\Delta_n^k:=(I\otimes \lifting)(\Gamma(t_n+k\rkc)-\Gamma_n^k),
	\qquad
\delta_n^k:=\lifting(\Gamma(t_n)-\gamma_n^k).
\]
 Using \eqref{eq:6.3a}, the definition $\Gamma^k=(\dd)^{-1} \Xi$, and~(\ref{eq:2.dantider}), we can write
\begin{equation}\label{eq:6.4}
\Delta_n^k-\ones \delta^k_n=(I \otimes \lifting) \Gamma(t_n+k\rkc)-\ones \lifting \Gamma(t_n)-k\rkA \otimes  \lifting  \dot\Gamma(t_n+k\rkc)
=\boldsymbol D^k(\lifting \Gamma;t_n).
\end{equation}
Lemma \ref{lemma:5.2} (take $f=\lifting\Xi$ for the first three inequalities and $f=F$ for the last one) proves that
\begin{subequations}
\begin{alignat}{6}
\label{eq:6.6a}
\|\delta^k_n\|_{\mathcal X}
	\le & C T k^p \| \lifting\Xi^{(p)}\|_{T,0},\\
\label{eq:6.6b}
\|\delta^k_n-\delta^k_{n-1}\|_{\mathcal X}
	\le & C k^{p+1} \|\lifting \Xi^{(p)}\|_{T,0},\\
\label{eq:6.6c}
\| k\rkb^\top \Delta^k_n\|_{\mathcal X}
\le & C k^{p+1} ( \|\lifting \Xi^{(p-1)}\|_{T,0}+T \|\lifting \Xi^{(p)}\|_{T,0}) \\
\label{eq:6.6d}
\| H(t_n)-h_n^k\|_{\mathcal X}
	\le & C T k^p \| F^{(p)}\|_{T,0}.
\end{alignat}
\end{subequations}
The error analysis is derived by tracking the evolution of the following differences
\[
E_n^k :=  x(t_n + k \rkc)-X^k_n-\Delta^k_n \in (\operatorname{dom} A)^m,
	\qquad
e^k_n :=  x(t_n)-x^k_n -\delta^k_n,
\]
(compare \eqref{eq:6.1} and \eqref{eq:6.2b} to see the vanishing boundary condition for $E_n^k$) and note that by \eqref{eq:6.6a}
\[
\| x(t_n)-x^k_n\|_{\mathcal X}
	\le \| e^k_n\|_{\mathcal X}+ C T k^p \| \lifting \Xi^{(p)}\|_{T,0},
\]
which shows that we only need to estimate $e^k_n$ to prove Theorem~\ref{theorem:3.1}.

We start with the observation that $x$ solves the following equation,
as can be easily derived from the equation~\eqref{eq:6.1}:
\begin{align}
  x(t_n + k \rkc)&=
                   \begin{multlined}[t]
                     \ones x(t_n) + k \rkA \otimes \AA_{\star}x(t_n+k \rkc)\\
                     + x(t_n+k \rkc) - k \rkA \dot{x}(t_n+k \rkc) + k \rkA H(t_n+k\rkc) - \ones x(t_n) 
                     \end{multlined} \nonumber\\\vspace{5mm}
                 &= \ones x(t_n) + k \rkA \otimes \AA_{\star}x(t_n+k \rkc) + k\rkA H(t_n+k\rkc) + \boldsymbol D^{k}(x,t_n) .
                   \label{eq:exact_solution_solves_rk_type_eqn}
\end{align}

Recalling that Assumption \ref{ass:1.1} included the hypothesis $\mathrm{range}\,\lifting\subset \ker (I-A_\star)$, we have $(\rkA\otimes A_\star)\Delta^k_n=\rkA \Delta^k_n$.
Combining \eqref{eq:exact_solution_solves_rk_type_eqn} and \eqref{eq:6.2a}, we get
\begin{align*}
  E^k_n&=\ones e^k_n+ k(\rkA\otimes A)E^k_n
  + \boldsymbol D^k(x,t_n) - k \rkA\big(H^k_n - H(t_n + k \rkc)\big)
  +\ones \delta^k_n-\Delta^k_n+k\rkA\Delta^k_n.		
\end{align*}
Naive estimation of the terms $\boldsymbol D^k(x,t_n)$ and $\Delta^k_n - \ones \delta^k_n$
would yield convergence rates similar to Propositions~\ref{prop:AMP1} and~\ref{prop:AMP2}.
In order to get an increased rate, as stated in Theorem~\ref{theorem:3.1}, we combine these two terms using the function
$Y(t):=x(t) - \lifting \Gamma(t)$. By Lemma~\ref{lemma:u_with_boundary_corrections_is_regular}
and the assumptions of Theorem~\ref{theorem:3.1}, this function satisfies
$Y \in \mathcal{C}^{p+\alpha}([0,T];\XX_{\mu+1}) \cap \mathcal{C}^{p+1}([0,T];\XX_{0})$.

We can thus further simplify
\begin{align}
  \label{eq:proof_thm3.1_err_rep_est1}
  E^k_n&=\begin{multlined}[t]
    \ones e^k_n+ k(\rkA\otimes A)E^k_n 
         + \boldsymbol D^k(x,t_n) - \boldsymbol D^k(\lifting \Gamma,t_n) \\
         + k \rkA \boldsymbol D^k(H,t_n) - k \rkA \ones\big( h^k_h - H(t_n)\big)
         +k\rkA\Delta^k_n \end{multlined}\\
       &=\begin{multlined}[t]
         \ones e^k_n+ k(\rkA\otimes A)E^k_n  +  \boldsymbol D^k(Y,t_n) \\
         +k \rkA \boldsymbol D^k(H,t_n) - k \rkA \ones \big(h^k_h - H(t_n)\big)
         +k\rkA\Delta^k_n.\end{multlined}
\end{align}

This then immediately gives (recall \eqref{eq:5.defg} for the definition of $\mathbf g$)
\begin{equation}
  \label{eq:proof_thm3.1_err_rep_est2}
k(\rkb^\top\otimes A)E^k_n
	=\mathbf g(kA)^\top
    \left[\ones e^k_n
    + \boldsymbol D^k(Y,t)
    + k \rkA \boldsymbol D^k(H,t_n)
    - k \rkA \ones \big(h^k_h - H(t_n)\big)
    +k\rkA\Delta^k_n\right].
\end{equation}

It is easy to see from \eqref{eq:6.1} that $x$ satisfies
$$
x(t_{n+1})= x(t_n) + k \rkb^\top \otimes A_{\star} x(t_n + k \,\rkc)  + \left[x(t_{n+1}) - x(t_n) - k \rkb^\top \dot{x}(t_{n}+ k\rkc) + k \rkb^\top H(t_n+k \rkc)\right] .
$$
Subtracting \eqref{eq:6.2c} from this,
plugging in~\eqref{eq:proof_thm3.1_err_rep_est2}, using that $(\rkb^\top \otimes A_\star)\Delta^k_n=\rkb^\top \Delta^k_n$,
and setting
$$\varphi^k_n:=\left[x(t_{n+1}) -x(t_n) - k \rkb^\top \dot{x}(t_{n}+k\rkc)\right]
+k \rkb^\top(H(t_n + k\rkc) - H^k_n),$$
we have 
\begin{alignat*}{6}
e^k_{n+1}
	=& e^k_n+k(\rkb^\top\otimes A)E^k_n 
        + k(\rkb^\top \otimes A_\star) \Delta^k_n+\delta^k_n-\delta^k_{n+1} + \varphi^k_n
        \\
	=& r(kA)e^k_n 		
    +\mathbf g(kA)^\top (k\rkA)\Delta^k_n
    +\mathbf g(kA)^\top \boldsymbol D^k(Y,t_n) \\
    &+ \mathbf g(kA)^\top (k\rkA)\boldsymbol D^k(H,t_n) 
    - \mathbf g(kA)^\top (k\rkA) \ones \big(h^k_h - H(t_n)\big) \\
    &+ k\rkb^\top  \Delta^k_n+\delta^k_n-\delta^k_{n+1}
    +\varphi^k_n.
\end{alignat*} 
What is left is the careful combination of terms so that we can bound everything using Lemmas \ref{lemma:5.7}, \ref{lemma:5.8}, and \ref{lemma:5.9} by writing
\begin{alignat*}{6}
  e^k_{n+1}-r(kA) e_n^k
  = &
  \mathbf g(kA)^\top (\boldsymbol D^k(Y;t_n)) \\
  &+\mathbf g(kA)^\top (k\rkA) (\boldsymbol D^k(\lifting \Gamma + H;t_n)) \\    
  &+\mathbf g(kA)^\top \rkA\ones \, k \big(\delta_n^k - \big(h^k_h - H(t_n)\big)\big) \\
  &+ k\rkb^\top \Delta_n^k + (\delta_n^k-\delta_{n+1}^k) + \varphi^k_n
\end{alignat*}
Since the above recurrence defining $\{e^k_n\}$ is linear as a function of the right-hand side, we can estimate its norm by adding the effects of each of the terms.
In the order in which they appear in the last expression, we use:
Lemmas \ref{lemma:5.8}-\ref{lemma:5.9} with $\beta=0$, but noting that $Y(t) \in \dom(A^{\mu+1})$;
Lemmas \ref{lemma:5.8}-\ref{lemma:5.9} with $\beta=1$; Lemma \ref{lemma:5.7}
combined with \eqref{eq:6.6a} and \eqref{eq:6.6d}; Lemma \ref{lemma:5.7} combined with \eqref{eq:6.6c} and \eqref{eq:6.6b};
for the first term of $\varphi^k_h$ we use Lemma~\ref{lemma:5.7} combined with Lemma~\ref{lemma:5.2} with $f:=\dot{x}$.
Finally, for the second contribution to $\varphi^k_h$, we use~(\ref{eq:55.4c}).

Combined, these results give
\begin{alignat*}{6}
\| e_n^k\|_{\mathcal X}
	\le & C T \rho_k(T) k^{\min\{q+\mu+1+\alpha,p\}}
		\Big(\sum_{\ell=q+1}^{p} \| Y^{(j)}\|_{\mu+1,T}
					+\| Y^{(p+1)}\|_{\alpha(\mu+1),T}\Big) \\
		& + C T \rho_k(T) k^{\min\{q+\mu+1+\alpha,p\}}
		\Big(\sum_{\ell=q+1}^{p} \| (\lifting \Gamma)^{(j)}\|_{\mu,T}
        +\| (\lifting \Gamma)^{(p+1)}\|_{\alpha \mu,T}\Big) \\
        & + C T \rho_k(T) k^{\min\{q+\mu+1+\alpha,p\}}
		\Big(\sum_{\ell=q+1}^{p} \| H^{(j)}\|_{\mu,T}
					+\| H^{(p+1)}\|_{\alpha \mu,T}\Big) \\
        &+ CT^2 \rho_k(T) k^{p+1} \| \lifting\Xi^{(p)}\|_{0,T}
        + C T^2 \rho_k(T) k^{p+1} \|{F^{(p)}}\|_{0,T}\\
        & + CT\rho_k(T) k^{p+1} 
        \big( \|\lifting \Xi^{(p-1)}\|_{T,0}+T \|\lifting \Xi^{(p)}\|_{T,0}\big) \\
        & + C T\rho_k(T) k^{p+1}  
          \big( \|H^{(p-1)}\|_{T,0}+T \|H^{(p)}\|_{T,0}\big).
\end{alignat*}
If we apply Lemma~\ref{lemma:u_with_boundary_corrections_is_regular} to bound the $\mathcal{X}_{\mu+1}$-norm, we arrive at the stated estimate.


\subsection{Proof of Theorem \ref{theorem:3.2}}

This proof is very similar to the one for Theorem \ref{theorem:3.1}, while slightly simpler. We will point out the main steps of the proof. Note that we use the
simple form of $\partial^k$ for stiffly accurate RK methods given in Lemma \ref{lemma:2.7}. 
We define $G:=\dot F$ and  $\Theta:=\dot\Xi$ so that $v=\dot u$ satisfies
\[
\dot v(t)=A_\star v(t)+G(t), \quad t>0, \qquad Bv(t)=\Theta(t), \qquad v(0)=0.
\]
Its RK approximation 
\begin{subequations}
\begin{alignat}{6}
\label{eq:6.7a}
\widetilde V_n^k =
	& \ones \widetilde v_n^k + k(\rkA\otimes A_\star)\widetilde V^k_n
		+ k\rkA G(t_n+k\rkc),\\
\label{eq:6.7b}
(I\otimes B) \widetilde V_n^k =
	& \Theta (t_n+k\rkc),\\
\label{eq:6.7c}
\widetilde v_{n+1}^k =
& \widetilde v_n^k + k(\rkb^\top\otimes A_\star) \widetilde V^k_n
+ k \rkb^\top G(t_n + k \rkc),
\end{alignat}
\end{subequations}
and $\{V_n^k\}=\partial^k \{U_n^k\}$ satisfies (see Proposition \ref{prop:3.7} and Lemma \ref{lemma:2.7}, where we use stiff accuracy of the RK scheme, and recall that $\{G^k_n\}=\partial^k \{F(t_n+k\rkc)\}$ and $\{\Theta^k_n\}=\partial^k\{ \Xi(t_n+k\rkc)\}$)
\begin{subequations}
\begin{alignat}{6}
\label{eq:6.8a}
V_n^k =
	& \ones  v_n^k + k(\rkA\otimes A_\star)  V^k_n+ k\rkA G^k_n,\\
\label{eq:6.8b}
(I\otimes B) V_n^k =
	& \Theta_n^k=k^{-1}\rkA^{-1}(\Xi(t_n+k\rkc)-\ones \Xi(t_n)),\\
\label{eq:6.8c}
v_{n+1}^k =
& v_n^k + k(\rkb^\top\otimes A_\star) V^k_n
+ k \rkb^\top G_n^k.
\end{alignat}
\end{subequations}
Let then
\[
\Delta^k_n:=(\mathcal I\otimes \lifting)(\Theta_n^k-\Theta(t_n+k\rkc))
=k^{-1}\rkA^{-1} \boldsymbol D^k(\lifting\Xi;t_n)
\]
and (note \eqref{eq:6.7b} and \eqref{eq:6.7c})
\[
E^k_n :=  V^k_n-\widetilde V^k_n-\Delta^k_n \in (\operatorname{dom} A)^m,
	\qquad
e^k_n :=  v^k_n-\widetilde v^k_n.
\]
By \eqref{eq:6.7a} and \eqref{eq:6.8a}, using that $(\rkA\otimes A_\star)\Delta_n^k=\rkA\Delta^k_n$ (assumption on the lifting) and Lemma \ref{lemma:2.7} to represent $G_n^k$, we have
\[
k(\rkb^\top\otimes A) E^k_n
	=\mathbf g(kA)^\top 
		(\ones e_n^k-\Delta_n^k+k\rkA\Delta_n^k+\boldsymbol D^k(F;t_n))
\]
and therefore, from \eqref{eq:6.7c} and \eqref{eq:6.8c}
\begin{multline}
e^k_{n+1}=r(kA) e^k_n 
	-\mathbf g(kA)^\top (k\rkA)^{-1}\boldsymbol D^k(\lifting\Xi;t_n) 
        +\mathbf g(kA)^\top\boldsymbol D^k(\lifting\Xi+F;t_n) \\
        + k \rkb^\top \rkA^{-1} D^k(\lifting \Xi+F;t_n).
      \end{multline}
      The final term can be shown using to be of order $\bigO(k^{p+1})$
        by combining~(\ref{eq:4.10}) with~(\ref{eq:5.2b}) and~(\ref{eq:4.44}).
      
      Use then Lemmas \ref{lemma:5.8} and \ref{lemma:5.9} with $\beta=-1$ and $\beta=0$
      as well as Lemma~\ref{lemma:5.7} to bound
\begin{alignat*}{6}
\| e_n^k\|_{\mathcal X}
\le & C T \rho_k(T) k^{\alpha-1+\min\{q+\mu,p\}}
\left(\sum_{j=q+1}^{p+\alpha} \|\lifting\Xi^{(j)}\|_{T,\mu}+
  \|\lifting\Xi^{(p+1+\alpha)}\|_{T,0}\right) \\
&+C T \rho_k(T) k^{\alpha-1+\min\{q+\mu,p\}}
\left(\sum_{j=q+1}^{p+\alpha} \|F^{(j)}\|_{T,\mu}+
  \| F^{(p+1+\alpha)}\|_{T,0}\right). 
\end{alignat*}
Finally Propositions \ref{prop:AMP1} and \ref{prop:AMP2} are used to bound 
\begin{equation}
\| v(t_n)-\widetilde v_n^k\|_{\mathcal X}
	\le C T \rho_k(T)k ^{\min\{ q+\mu+\alpha,p\}} 
        \Big(
        \sum_{\ell=q+2}^{p+1+\alpha} \| u^{(\ell)}\|_{T,\mu}
        + \| u^{(p+2)}\|_{T,0}\Big).
\end{equation}

\subsection{Proof of Theorem \ref{theorem:3.3}}

Thanks to Proposition \ref{prop:3.7}, Theorem \ref{theorem:3.3} can be proved as an easy corollary of Theorem \ref{theorem:3.2}. Since the last stage of a stiffly accurate
method is the step, we have that \eqref{eq:3.90a} implies that
\[
A_\star u^k_n=v^k_n-F(t_n)
\]
and therefore
\[
A_\star u(t_n)-A_\star u_n^k=v(t_n)-v^k_n.
\]

\section{Maximal dissipative operators in Hilbert space}
\label{sect:maximal_dissipative_operators}
In this short section
we summarize some results that show that the hypotheses on the abstract equation and its discretization are simpler for maximal dissipative operators on Hilbert spaces.
These results are well-known and will be needed when applying the theory developed in the previous sections to some model problems in Section~\ref{sect:applications}.

If $A$ is \emph{maximal dissipative} in the Hilbert space $\mathcal X$, i.e.,
\[
\mathrm{Re} \langle\AA x,x\rangle_{\mathcal{X}} \leq 0 
	\qquad\forall x\in \dom \AA,
\]
and if $\AA-\id:\dom\AA\to \mathcal X$ is invertible with bounded inverse, then
the constants in Proposition \ref{prop:2.1} can be chosen as $M=1$ and $\omega=0$. In this case $A$ generates a contraction semigroup in $\mathcal H$.
See \cite[Section 1.4]{pazy}.

In particular, if the RK method satisfies Assumption \ref{ass:1.2} and
\begin{equation}\label{eq:2.2}
\sigma(A)\subset \{ z\,:\, \mathrm{Re}\,z\le 0\},
\end{equation}
then the equations \eqref{eq:1.2} (or equivalently \eqref{eq:1.3}), defining the RK approximation of \eqref{eq:1.1} are uniquely solvable for any $k>0$ (apply Lemma \ref{lemma:2.2} with $\mathcal S=k^{-1}\rkA^{-1}$).
The following lemma gives a bound for $\rho_k(T)$ in this specific setting.

\begin{lemma}[Discrete Stability]
\label{lemma:discrete_stability}
Let $\AA$ be a linear, maximally dissipative operator on a Hilbert space $\mathcal{H}$.
For $A$-stable Runge-Kutta methods and  arbitrary $k>0$, we can bound
\begin{equation}
\label{eq:rk_R_is_contraction}
\norm{r(k \,A)}_{\mathcal{H} \to \mathcal{H}}\leq 1,
\end{equation}
and therefore $\rho_k(T)\leq 1$ for all $k$ and $T>0$.
\end{lemma}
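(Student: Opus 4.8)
The goal is to prove the bound $\|r(kA)\|_{\mathcal H \to \mathcal H} \le 1$ for $A$-stable Runge-Kutta methods when $A$ is maximal dissipative on a Hilbert space, which immediately yields $\rho_k(T) \le 1$ since $\rho_k(T)$ is a supremum of norms of powers $r(kA)^n$ and the operator norm is submultiplicative.

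The plan is to exploit the fact that maximal dissipativity gives very strong resolvent control: specifically, for any $z$ with $\mathrm{Re}\, z > 0$, the operator $(zI - kA)$ is invertible with $\|(zI-kA)^{-1}\|_{\mathcal H \to \mathcal H} \le 1/\mathrm{Re}\,z$ (this is the $M=1$, $\omega=0$ case of Proposition~\ref{prop:2.1}, valid since $kA$ is also maximal dissipative for $k>0$). The standard route is to represent $r(kA)$ via a contour integral or, more elementarily for rational functions, to use the partial fraction / product structure from Definition~\ref{def:2.3} together with a spectral-mapping-type argument. However, the cleanest approach in the Hilbert space setting is the von Neumann-type argument: since $A$ is maximal dissipative, $-A$ generates the analytic machinery so that $r(kA)$ is a contraction precisely when $|r(z)| \le 1$ on the closed left half-plane (the spectrum of $kA$), which is exactly $A$-stability (Assumption~\ref{ass:1.2}(\ref{ass:rk_is_a_stable})).

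Concretely, I would proceed as follows. First, observe that $kA$ is maximal dissipative since $k>0$, so $\sigma(kA) \subset \{\mathrm{Re}\,z \le 0\}$ and the poles of $r$ (contained in $\mathbb C_+$ by A-stability) avoid the spectrum, so $r(kA)$ is well-defined by Definition~\ref{def:2.3}. Second, write the stability function as $r(z) = r(\infty) + \sum_i \frac{\alpha_i}{z - \lambda_i}$ in partial fractions (or use the product form). The key estimate is a dilation/numerical-range argument: for maximal dissipative $A$ one has, for each resolvent factor, a contractivity property that propagates through the rational function. The sharp tool here is that A-stability means $|r(z)| \le 1$ for $\mathrm{Re}\,z \le 0$, and by a theorem of von Neumann (or the functional calculus for dissipative generators, sometimes attributed to Crouzeix--Delyon in this discrete setting), this sup-norm bound on the left half-plane transfers directly to the operator norm bound $\|r(kA)\| \le \sup_{\mathrm{Re}\,z \le 0}|r(z)| \le 1$.

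The main obstacle is justifying that the scalar bound $\sup_{\mathrm{Re}\,z\le 0}|r(z)|\le 1$ lifts to the operator bound without loss of constant — this is not automatic for non-normal operators and genuinely uses the Hilbert space structure (it would fail in a general Banach space). The rigorous justification goes through either the Hille--Phillips functional calculus applied to $r$ as a bounded function on the contractively embedded half-plane, or through an explicit dilation of $A$ to a skew-adjoint operator on a larger space where the estimate becomes the trivial bound for functions of a self-adjoint (hence normal) operator. I expect the paper's own proof to cite this as a known result (the text already flags it as \emph{well-known}); the honest core of the argument is the observation that A-stability is \emph{defined} precisely so that $|r|\le 1$ on the spectrum-containing half-plane, and maximal dissipativity on a Hilbert space is exactly the hypothesis under which the scalar-to-operator transfer holds with constant $1$.
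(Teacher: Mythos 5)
Your proposal is correct and takes essentially the same approach as the paper: both reduce the claim to von Neumann's inequality, using $A$-stability as the scalar bound $|r|\le 1$ on the closed left half-plane and maximal dissipativity on the Hilbert space to transfer that bound to the operator norm. The paper makes the transfer concrete via the Cayley transform $c(z)=(z+1)/(z-1)$ --- the identity $\|(A+I)x\|^2-\|(A-I)x\|^2=4\,\mathrm{Re}\langle Ax,x\rangle\le 0$ shows $\|c(A)\|_{\mathcal H\to\mathcal H}\le 1$, von Neumann's inequality is then applied to $q:=r\circ c$ mapping the unit disk to itself, and $r(A)=q(c(A))$ follows from $c\circ c=\mathrm{id}$ --- which is precisely the mechanism you gesture at with the dilation/skew-adjoint remark.
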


\begin{proof}
Let $c(z):=(z+1)/(z-1)$, and note that $c(A)=(A+\id)(A-\id)^{-1}$ is well defined and since
\[
\| (A+\id)x\|^2-\|(A-\id)x\|^2=4\mathrm{Re}\,\langle Ax,x\rangle\le 0 \qquad \forall x\in \dom \AA,
\]
it is clear that $\|c(A)\|_{\mathcal H \to\mathcal H}\le 1.$ Consider now the rational function $q:=r\circ c$. Since $c$ maps $B(0;1)$ bijectively into $\{ z:\mathrm{Re}\,z<0\}$ and $r$ maps the latter set to $B(0;1)$ (this is A-stability), it follows that $q:B(0;1)\to B(0;1)$. Since $\sigma(c(A))\subset \overline{B(0;1)}$ and $c(A)$ is bounded, we can define $q(c(A))$ and show (use a classical result of Von Neumann \cite[Section 4]{von_neumann} or \cite[Chapter XI, Section 154]{riesz_nagy}) that $\|q(c(A))\|_{\mathcal H \to\mathcal H}\le 1$.

Finally, using that $c(c(z))=z$ for all $z$, it follows that $r=q\circ c$. It is then an easy computation to prove that $r(A)=q(c(A))$. (Note that this equality can also be proved using functional calculus.)
\end{proof}

In Propositions \ref{prop:AMP1} and \ref{prop:AMP2}, if $A$ is maximally dissipative, $k_0$ can be chosen arbitrarily. In Lemma \ref{lemma:4.2}, if $\AA$ is maximally dissipative, $k_0$ can be chosen arbitrarily.

\section{Applications}
\label{sect:applications}
In this section, $\Omega$ is a bounded Lipschitz open set in $\R^d$ ($d=2$ or $3$) with boundary $\Gamma$.

We use the usual (fractional) Sobolev spaces $H^s(\Omega)$ for $s\geq 0$ and introduce the space
  $H^1_{\Delta}(\Omega):=\{ u \in H^1(\Omega): \laplace u \in L^2(\Omega) \}$.
  On the boundary $\Gamma$, we also consider Sobolev spaces $H^s(\Gamma)$ and
  their duals $H^{-s}(\Gamma)$. Details can, for example be found in \cite{mclean}.

We will consider the two-sided bounded surjective trace operator $\gamma^\pm:H^1(\R^d\setminus\Gamma)\to H^{1/2}(\Gamma)$
and we will denote $H^{-1/2}(\Gamma)$ for the dual of the trace space. The angled bracket $\langle\,\cdot\,,\,\cdot\,\rangle_\Gamma$ will be used for the $H^{-1/2}(\Gamma)\times H^{1/2}(\Gamma)$ duality pairing and $(\cdot,\cdot)_{\R^d}$ will be used for the inner product in $L^2(\R^d)$ and $\big[L^2(\R^d)\big]^d$.
We will also use the normal trace  $\gamma^\pm_\nu: H(\operatorname{div},\R^d \setminus \Gamma)\to H^{-1/2}(\Gamma)$ and the normal derivative
operators $\partial_\nu^\pm$. Here we make the convention that the normal derivative points out of $\Omega$ for both interior and exterior trace.

We note that the applications in this section are chosen for their simplicity. More complicated applications, also involving
full discretizations by convolution quadrature and boundary elements of systems of time domain boundary integral equations can
be found in \cite{composite_scattering} and \cite{heateq}.

\subsection{Boundary Integral Equations and Convolution Quadrature}
\label{sect:bem_and_cq}
In this section, we give a very brief introduction to boundary integral equations and their discretization using convolution quadrature.
In that way, we can later easily state our methods for both the heat and wave equations in a concise and unified language.
We present the result mostly formally, but note that they can be made rigorous under mild assumptions on the appearing
functions. This theory  can be found in most monographs on boundary element methods, see e.g., \cite{book_sauter_schwab,mclean,steinbach} or \cite{sayas_book}.

For $s \in \C_{+}$, we consider solutions $u \in H^1(\R^d\setminus \Gamma)$ to
the Helmholtz equation
$$-\laplace u - s^2 u =0 \qquad \text{in } \R^d \setminus \Gamma.$$
Using the representation formula, $u$ can be rewritten using only its boundary data:
\begin{align}
  \label{eq:representation_formula}
  u(x)= S(s) \normalDjump{u} - D(s)\tracejump{u},
\end{align}
where the single layer and double layer potentials are given by
\begin{align*}
    \left(S(s) \varphi \right)\left(x\right)&:=\int_{\Gamma}{\Phi(x-y;s) \varphi(y) \;dy}, \\
    \left(D(s) \psi \right)\left(x\right)&:=\int_{\Gamma}{\partial_{n(y)}\Phi(x-y;s) \psi(y) \;dy}.
\end{align*}
We note that both $S(s) \lambda$ and $D(s) \psi$ solve the Helmholtz equation for any given
densities $\lambda \in H^{-1/2}(\Gamma)$ and $\psi \in H^{1/2}(\Gamma)$.

We will need the following four boundary integral operators:
\begin{alignat}{4}
  V(s)&:=\gamma^\pm S(s), &\quad  K(s)&:=\frac{1}{2}(\gamma^+ S(s) + \gamma^- S(s)), \\
  K^t(s)&:=\frac{1}{2}(\partial_\nu^+ D(s) + \partial_\nu^- D(s)), &\quad  W(s)&:=- \partial_\nu D(s).
\end{alignat}

When solving problems in the time domain, we can leverage our knowledge of the Helmholtz equation
using the Laplace transform $\ltrafo$.
For an operator valued analytic function $F$ with $\dom(F) \supset \C_{+}$,
we can then define the convolution operator $F(\partial):=\ltrafo^{-1} \circ \mathrm F \circ \ltrafo$, where $\ltrafo$
is the Laplace transform in the sense of causal distributions.
(Precise definitions can be found in~\cite[Chapter 3]{sayas_book} and~\cite{lubich94}).

Given a Runge-Kutta method, it is then easy to define the convolution quadrature approximation to such operators, as was introduced in~\cite{lubich_ostermann_rk_cq}.
We just replace the Laplace transform by the $Z$-transform and $s$ with the function $\delta/k$, i.e., we define:
\begin{align*}
  F(\dd) g:=\mathscr{Z}^{-1} \left(F\bigg(\frac{\delta(z)}{k}\bigg) \ztrafo{g}\right),
\end{align*}
where $g$ denotes a sequence in the shared domain of $F(s)$ and $k>0$ denotes the stepsize.
The matrix-valued function $F(\frac{\delta(z)}{k})$ is defined using the Riesz-Dunford calculus, but can be
computed in practice by diagonalizing the argument.

\begin{remark}
  We note that our use of the notation $\partial^k$ and $(\partial^k)^{-1}$ is consistent with this definition
  by using the functions $F(s):=s$ and $F(s):=s^{-1}$.
\end{remark}

\subsection{An exotic transmission problem}
\label{sect:exotic_transmission}
In this section we show how to apply Theorems \ref{theorem:3.1}-\ref{theorem:3.3} to a transmission problem in free space associated to the infinitesimal generator of a group of isometries (both $\pm A$ are maximal dissipative) with some exotic transmission conditions which impose partial observation of a trace. In Section \ref{sec:9} we will explain how this problem is related to a boundary integral representation of a scattering problem and how the current results yield the analysis of a fully discrete method for that integral representation.  We keep the presentation brief.
  For more details and exemplary applications we refer to~\cite{sayas_new_analysis}.

Let $Y_h$ be finite dimensional subspace of $H^{1/2}(\Gamma)$ and consider the spaces
\begin{subequations}
\begin{alignat}{6}
\mathbf H(\mathrm{div},\R^d\setminus\Gamma)
	:=&\{\mathbf w\in L^2(\R^d\setminus\Gamma)^d\,:\, 
    \nabla\cdot\mathbf w\in L^2(\R^d\setminus\Gamma)\},\\        
\mathrm V_h :=&\{ v\in H^1(\R^d\setminus\Gamma)\,:\, 
	\tracejump{v} \in Y_h\},\\
\mathbf W_h :=& \{ \mathbf w\in \mathbf H(\mathrm{div},\R^d\setminus\Gamma)
\,:\, \langle \gamma_n^- \mathbf w, \mu_h \rangle_{\Gamma} = 0 \; \quad\forall \mu_h \in Y_h \},\\
\mathbf W_h^0:=& \mathbf W_h\cap \mathbf H(\mathrm{div},\R^d) \\=& 
	\{ w\in \mathbf H(\mathrm{div},\R^d)\,:\, 
		\langle \gamma_n^- \mathbf w, \mu_h \rangle_{\Gamma} = 0 \;\quad \forall \mu_h \in Y_h\}.
\end{alignat}
\end{subequations}
The expression $\tracejump{v}:=\gamma^- v - \gamma^+ v$ denotes the jump of the trace of $v$ across $\Gamma$.
The condition $\tracejump{v} \in Y_h$ is equivalent to
\begin{equation}\label{eq:8.2}
(\nabla\cdot\mathbf w,v)_{\R^d\setminus\Gamma}
	+(\mathbf w,\nabla v)_{\R^d}=0 \qquad \forall \mathbf w\in \mathbf W_h^0.
\end{equation}
We then set
\[
\mathcal X:=L^2(\R^d \setminus \Gamma)\times L^2(\R^d\setminus\Gamma)^d,
	\qquad
\mathcal V:=\mathrm V_h\times \mathbf W_h,
	\qquad
\mathcal M:=H^{-1/2}(\Gamma).
\]
In $\mathcal X$ we use the natural inner product, in $\mathcal V$ we use the norm of $H^1(\R^d\setminus\Gamma)\times \mathbf H(\mathrm{div},\R^d\setminus\Gamma)$, and in $\mathcal M$ we use the usual norm. We will define $A_\star:\dom A_\star=\mathcal V\to \mathcal X$ and $B:\mathcal V \to \mathcal M$ by 
\[
A_\star(v,\mathbf w):=(\nabla\cdot\mathbf w,\nabla v),
\qquad
B(v,\mathbf w):=\gamma_n^- \mathbf w -\gamma_n^+ \mathbf w,
\]
understanding that $A_\star$ can also be extended to $H^1(\R^d\setminus\Gamma)\times \mathbf H(\mathrm{div},\R^d\setminus\Gamma)$. As we did in Assumption \ref{ass:1.1}, we consider $\operatorname{dom} A=\ker B =\mathrm V_h\times \mathbf W_h^0$ and define $A$ as the restriction of $A_\star$ to this subset. 
 
\begin{proposition}\label{prop:8.0}
The operators $\pm A$ are maximal dissipative.
\end{proposition}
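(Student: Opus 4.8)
The plan is to verify the hypotheses of the Lumer--Phillips theorem for $A$ and for $-A$ simultaneously. Since $\dom A=\mathrm V_h\times\mathbf W_h^0$ contains $C_c^\infty(\R^d\setminus\Gamma)\times C_c^\infty(\R^d\setminus\Gamma)^d$ (whose elements have vanishing traces across $\Gamma$ and therefore trivially satisfy all the constraints defining $\mathrm V_h$ and $\mathbf W_h^0$), the operator $A$ is densely defined in $\mathcal X$. It then remains to establish (i) that $A$ is dissipative with $\operatorname{Re}\langle A(v,\mathbf w),(v,\mathbf w)\rangle_{\mathcal X}=0$ on $\dom A$, so that $A$ \emph{and} $-A$ are both dissipative, and (ii) the two range conditions $\operatorname{range}(I-A)=\mathcal X$ and $\operatorname{range}(I+A)=\mathcal X$.

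For (i) I would compute, for $(v,\mathbf w)\in\dom A$,
\[
\langle A(v,\mathbf w),(v,\mathbf w)\rangle_{\mathcal X}
=(\nabla\cdot\mathbf w,v)_{\R^d\setminus\Gamma}+(\nabla v,\mathbf w)_{\R^d}.
\]
Because $v\in\mathrm V_h$ and $\mathbf w\in\mathbf W_h^0$, the variational characterization \eqref{eq:8.2} of the space $\mathrm V_h$ gives $(\nabla\cdot\mathbf w,v)_{\R^d\setminus\Gamma}+(\mathbf w,\nabla v)_{\R^d}=0$. Taking real parts and using $\operatorname{Re}(\nabla v,\mathbf w)_{\R^d}=\operatorname{Re}(\mathbf w,\nabla v)_{\R^d}$ shows $\operatorname{Re}\langle A(v,\mathbf w),(v,\mathbf w)\rangle_{\mathcal X}=0$. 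Consequently $\pm A$ are both dissipative, and the standard estimate $\|(I\mp A)x\|_{\mathcal X}\ge\|x\|_{\mathcal X}$ shows that $I\mp A$ is injective with a bounded inverse on its range; only surjectivity will be missing.

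For (ii) the main idea is that, although the first-order operator $A$ is not elliptic (so the naturally associated graph-norm bilinear form is only $\mathcal X$-coercive), the flux $\mathbf w$ can be eliminated to leave a coercive second-order problem. To solve $(I-A)(v,\mathbf w)=(f,\mathbf g)$ the second component forces $\mathbf w=\nabla v+\mathbf g$, and I would pose the reduced problem: find $v\in\mathrm V_h$ with
\[
(v,\phi)_{\R^d\setminus\Gamma}+(\nabla v,\nabla\phi)_{\R^d}
=(f,\phi)_{\R^d\setminus\Gamma}-(\mathbf g,\nabla\phi)_{\R^d}
\qquad\forall\,\phi\in\mathrm V_h.
\]
Its bilinear form is the $H^1(\R^d\setminus\Gamma)$ inner product, which is coercive on the closed subspace $\mathrm V_h$, so Lax--Milgram produces a unique $v$; I then set $\mathbf w:=\nabla v+\mathbf g$. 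Testing the reduced equation first against $\phi\in C_c^\infty(\R^d\setminus\Gamma)\subset\mathrm V_h$ yields $\nabla\cdot\mathbf w=v-f\in L^2(\R^d\setminus\Gamma)$ on each side of $\Gamma$, which recovers the strong equation $v-\nabla\cdot\mathbf w=f$. The case $I+A$ is handled identically with $\mathbf w=\mathbf g-\nabla v$.

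The hard part---and the step I expect to be the main obstacle---is to confirm that this recovered $\mathbf w$ genuinely lies in $\mathbf W_h^0$, i.e. that it has a continuous normal trace across $\Gamma$ and is orthogonal to $Y_h$. For this I would test the reduced equation against a \emph{general} $\phi\in\mathrm V_h$, integrate by parts on each side of $\Gamma$, and subtract the already-established strong equation to obtain
\[
\langle\gamma_n^-\mathbf w,\gamma^-\phi\rangle_\Gamma-\langle\gamma_n^+\mathbf w,\gamma^+\phi\rangle_\Gamma=0
\qquad\forall\,\phi\in\mathrm V_h.
\]
Exploiting surjectivity of the two-sided trace, I would first choose $\phi$ with $\gamma^-\phi=\gamma^+\phi$ arbitrary in $H^{1/2}(\Gamma)$ to conclude $\gamma_n^-\mathbf w=\gamma_n^+\mathbf w$, hence $\mathbf w\in\mathbf H(\mathrm{div},\R^d)$; then, writing $\gamma_n\mathbf w$ for this common trace, I would let the jump $\gamma^-\phi-\gamma^+\phi$ range over all of $Y_h$ to force $\langle\gamma_n\mathbf w,\mu_h\rangle_\Gamma=0$ for every $\mu_h\in Y_h$. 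Thus $\mathbf w\in\mathbf W_h^0$ and $(v,\mathbf w)\in\dom A$ solves the resolvent equation, establishing (ii). With (i) and (ii) in hand, the Lumer--Phillips theorem shows that both $A$ and $-A$ generate contraction semigroups, i.e. $\pm A$ are maximal dissipative.
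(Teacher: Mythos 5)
Your proof is correct and follows essentially the same route as the paper: dissipativity of $\pm A$ from the identity \eqref{eq:8.2}, and the range condition via the same coercive reduced problem in $\mathrm V_h$ with $\mathbf w:=\nabla v+\mathbf g$. You additionally spell out the verification that the recovered $\mathbf w$ indeed lies in $\mathbf W_h^0$ (continuity of the normal trace and orthogonality to $Y_h$), a step the paper leaves implicit, and your argument for it is sound.
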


\begin{proof}
The identity \eqref{eq:8.2} shows that $\langle A(v,\mathbf w),(v,\mathbf w)\rangle_{\mathcal X}=0$ for all $(v,\mathbf w)\in \mathrm V_h \times \mathbf W_h^0.$ Given $(f,\mathbf f)\in \mathcal X$, solving the coercive problem
\[
v\in \mathrm V_h, \qquad (\nabla v,\nabla w)_{\R^d}+(v,w)_{\R^d}
	=(f,w)_{\R^d}-(\mathbf f,\nabla w)_{\R^d} \quad \forall w\in \mathrm V_h,
\]
and defining $\mathbf w=\nabla v+\mathbf f$, we have a pair $(v,\mathbf w)\in \mathrm V_h \times \mathbf W_h^0$ such that $(v,\mathbf w)-A(v,\mathbf w)=(f,\mathbf f)$ and thus $A$ is maximal dissipative. 
The proof of the maximal dissipativity of $-A$ is similar. (Note that this is a particular case of what appears in \cite{sayas_new_analysis}.)
\end{proof}

We consider the standard problem \eqref{eq:1.1} with vanishing initial conditions and data $F=0$ and $\Xi=g:[0,\infty)\to  L^2(\Gamma)$, namely, we look for $(v_h,\mathbf w_h):[0,\infty)\to \operatorname{dom} A_\star$ such that
\begin{subequations}\label{eq:8.3}
\begin{alignat}{6}
& (\dot v_h(t),\dot{\mathbf w}_h(t))= 
	(\nabla\cdot \mathbf w_h(t),\nabla v_h(t))\quad && \forall t>0,\\
& \big\langle\gamma_\nu^+ \mathbf w_h(t)-\gamma_\nu^- \mathbf w^h(t),\mu\big\rangle_{\Gamma}=\left<g(t),\mu\right>_{\Gamma} \quad
	&& \forall \mu \in Y_h, \quad\forall t>0,\\
& (v_h(0),\mathbf w_h(0))=(0,\mathbf 0).
\end{alignat}
\end{subequations}
Uniqueness of the solution to \eqref{eq:8.3} follows from Proposition \ref{prop:8.0}. We will handle existence of a solution below.
The quantities of interest are $u_h:=\partial^{-1}v_h$ and its Dirichlet trace $\psi_h:=\tracejump{ u_h}:\; [0,\infty) \to Y_h$.

\begin{proposition}\label{prop:8.2}
There exists a linear bounded  right inverse of $B$, $\lifting :\mathcal M \to \operatorname{dom} A_\star$ such that $\mathrm{range}\,\lifting\subset \ker (I-A_\star)$. The norm of $\lifting$ is independent of the space $Y_h$.
\end{proposition}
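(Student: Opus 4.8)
The plan is to realize $\lifting$ as the solution operator of a coercive variational problem posed on the constrained space $\mathrm V_h$. First I would unravel what the two requirements mean. The condition $\mathrm{range}\,\lifting\subset\ker(I-A_\star)$ forces any pair $(v,\mathbf w)=\lifting\mu$ to satisfy $\mathbf w=\nabla v$ and $v=\nabla\cdot\mathbf w=\Delta v$, i.e. $v$ must solve the modified Helmholtz equation $-\Delta v+v=0$ in $\R^d\setminus\Gamma$, with $\mathbf w$ then determined by $v$. The condition $B\lifting\mu=\mu$ reads $\gamma_\nu^-\nabla v-\gamma_\nu^+\nabla v=\partial_\nu^- v-\partial_\nu^+ v=\mu$, a prescribed jump of the normal derivative, while membership in $\dom A_\star=\mathcal V$ additionally demands $v\in\mathrm V_h$ (so that $\gamma^- v-\gamma^+ v\in Y_h$) and $\nabla v\in\mathbf W_h$ (so that $\langle\partial_\nu^- v,\mu_h\rangle_\Gamma=0$ for all $\mu_h\in Y_h$). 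Thus everything reduces to producing a single scalar field $v$.

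Next I would define $v\in\mathrm V_h$ as the unique solution of
\[
(\nabla v,\nabla w)_{\R^d\setminus\Gamma}+(v,w)_{\R^d\setminus\Gamma}=\langle\mu,\gamma^+ w\rangle_\Gamma\qquad\forall w\in\mathrm V_h,
\]
and set $\lifting\mu:=(v,\nabla v)$. The bilinear form on the left is exactly the inner product of $H^1(\R^d\setminus\Gamma)$, hence coercive and continuous with constant $1$ on the closed subspace $\mathrm V_h$; the right-hand side is bounded by $\|\gamma^+\|\,\|\mu\|_{H^{-1/2}(\Gamma)}\,\|w\|_{H^1(\R^d\setminus\Gamma)}$. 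By Lax--Milgram, $v$ exists, is unique, depends linearly on $\mu$, and satisfies $\|v\|_{H^1(\R^d\setminus\Gamma)}\le\|\gamma^+\|\,\|\mu\|_{H^{-1/2}(\Gamma)}$. Crucially this bound involves only the coercivity constant and the trace norm on the full space $H^1(\R^d\setminus\Gamma)$, neither of which sees $Y_h$, which yields the claimed $Y_h$-independence of $\|\lifting\|$.

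Then I would verify the required properties by testing the weak identity against suitable $w\in\mathrm V_h$. Interior test functions $w\in C_c^\infty(\Omega)$ and $w\in C_c^\infty(\R^d\setminus\overline{\Omega})$ (extended by zero, so $\gamma^- w-\gamma^+ w=0\in Y_h$) give $-\Delta v+v=0$ on each side; hence $\Delta v=v\in L^2$, $\nabla v\in\mathbf H(\mathrm{div},\R^d\setminus\Gamma)$, and $(I-A_\star)(v,\nabla v)=(v-\Delta v,\mathbf 0)=\mathbf 0$, which is the range condition. With the PDE in hand, a one-sided Green identity turns the weak form into $\langle\partial_\nu^- v,\gamma^- w\rangle_\Gamma-\langle\partial_\nu^+ v,\gamma^+ w\rangle_\Gamma=\langle\mu,\gamma^+ w\rangle_\Gamma$ for every $w\in\mathrm V_h$. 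Choosing $w$ supported in $\overline{\Omega}$ with $\gamma^+ w=0$ and $\gamma^- w\in Y_h$ arbitrary yields $\langle\partial_\nu^- v,\gamma^- w\rangle_\Gamma=0$, i.e. $\nabla v\in\mathbf W_h$, so that $\lifting\mu\in\mathcal V=\dom A_\star$; choosing $w\in H^1(\R^d)$ continuous across $\Gamma$ with arbitrary common trace $\eta$ yields $\langle\partial_\nu^- v-\partial_\nu^+ v,\eta\rangle_\Gamma=\langle\mu,\eta\rangle_\Gamma$, i.e. $B\lifting\mu=\mu$. Boundedness in $\mathcal V$ is immediate since $\|\nabla v\|_{\mathbf H(\mathrm{div})}^2=\|\nabla v\|_{L^2}^2+\|\Delta v\|_{L^2}^2=\|\nabla v\|_{L^2}^2+\|v\|_{L^2}^2\le\|v\|_{H^1(\R^d\setminus\Gamma)}^2$.

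The main obstacle is conceptual rather than computational: one must recognize that the constraint $\partial_\nu^- v\perp Y_h$ defining $\mathbf W_h$ is \emph{not} imposed explicitly but emerges as the natural (hidden) boundary condition of the Galerkin problem on the constrained space $\mathrm V_h$, and that placing the exterior trace $\gamma^+$ rather than $\gamma^-$ in the right-hand side is exactly what singles out this condition on $\gamma_\nu^-\nabla v$ (matching the definition of $\mathbf W_h$) while simultaneously producing the correct normal-derivative jump. The remaining care lies in justifying the one-sided Green identities in $H^{-1/2}(\Gamma)$: since $v\in H^1$ with $\Delta v\in L^2$ on each subdomain, the traces $\partial_\nu^\pm v$ are well defined in $H^{-1/2}(\Gamma)$ and the integration-by-parts steps above are legitimate, with all duality pairings interpreted in the $H^{-1/2}(\Gamma)\times H^{1/2}(\Gamma)$ sense.
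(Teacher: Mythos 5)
Your proposal is correct and follows essentially the same route as the paper: both define $\lifting\xi:=(v,\nabla v)$ where $v\in\mathrm V_h$ solves the coercive problem $(\nabla v,\nabla w)_{\R^d\setminus\Gamma}+(v,w)_{\R^d\setminus\Gamma}=\langle\xi,\gamma^+w\rangle_\Gamma$ for all $w\in\mathrm V_h$, and both obtain the $Y_h$-independence from the fact that the coercivity constant and the bound $\abs{\langle\xi,\gamma^+w\rangle_\Gamma}\le\norm{\xi}_{H^{-1/2}(\Gamma)}\norm{w}_{H^1(\R^d\setminus\Gamma)}$ do not see $Y_h$. Your explicit verification via one-sided Green identities simply fills in the equivalence that the paper states tersely by pointing to its identity \eqref{eq:8.2}.
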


\begin{proof}
Given $\xi\in \mathcal M=H^{-1/2}(\Gamma)$, we solve the coercive problem 
  \begin{alignat}{6}
    \label{eq:8.4}
& v\in \mathrm V_h, \quad 
& (\nabla v,\nabla w)_{\R^d \setminus \Gamma}+(v,w)_{\R^d \setminus \Gamma}= \langle \xi, \gamma^+ {w} \rangle_{\Gamma} \quad \forall w\in \mathrm V_h,
\end{alignat}
and then define $\mathbf w:=\nabla v$.

This problem is equivalent to (note \eqref{eq:8.2})  
\begin{equation}
(v,\mathbf w)\in \operatorname{dom} A_\star,
	\qquad
(v,\mathbf w)=A_\star(v,\mathbf w),
	\qquad
B(v,\mathbf w)=\xi.
\end{equation} 
Since $\abs{\langle \xi, \gamma^+ {w} \rangle_{\Gamma}} \leq \norm{\xi}_{H^{-1/2}(\Gamma)} \norm{w}_{H^1(\R^d \setminus \Gamma)} $
it follows that the norm of the solution operator for \eqref{eq:8.4} is independent of the space $Y_h$.  
\end{proof}

\begin{proposition}\label{prop:8.1}
  The lifting $\lifting$ from Proposition~\ref{prop:8.2} is a bounded linear
    map $L^2(\Gamma) \to \mathcal X_{1/2}:=[\mathcal X,\operatorname{dom} A]_{1/2}$
    with
    \begin{align*}
      \norm{\lifting \lambda}_{\XX_{1/2}} \leq C \norm{\lambda}_{L^2(\Gamma)}.
    \end{align*}
    $C$ depends only of $\Omega$.
\end{proposition}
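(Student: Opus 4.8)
The plan is to verify membership in $\XX_{1/2}=[\XX,\dom A]_{1/2}$ directly through the $K$-functional underlying~\eqref{eq:definition_interpolation_norm}. Writing $g:=\lifting\lambda$ and $K(t,g):=\inf_{w\in\dom A}\big(\norm{g-w}_{\XX}+t\norm{w}_{\dom A}\big)$, it suffices to produce, for every $t>0$, a competitor $\phi_t\in\dom A$ with $\norm{g-\phi_t}_{\XX}+t\norm{\phi_t}_{\dom A}\le C\,t^{1/2}\norm{\lambda}_{L^2(\Gamma)}$, since then $\operatorname{ess\,sup}_{t>0}t^{-1/2}K(t,g)\le C\norm{\lambda}_{L^2(\Gamma)}$. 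I first record that by construction $g\in\dom A_\star$ with $A_\star g=g$, and by Proposition~\ref{prop:8.2} combined with the embedding $L^2(\Gamma)\hookrightarrow H^{-1/2}(\Gamma)$ one has $\norm{g}_{\XX}\le\norm{g}_{\dom A_\star}\le C\norm{\lambda}_{H^{-1/2}(\Gamma)}\le C\norm{\lambda}_{L^2(\Gamma)}$. For $t\ge 1$ the choice $\phi_t=0$ already suffices, so the whole difficulty sits in the regime $t\le 1$.

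For $t\le 1$ I would use the resolvent regularization $\phi_t:=(I-tA)^{-1}g\in\dom A$, available since $\pm A$ are maximal dissipative (Proposition~\ref{prop:8.0}), which gives $\norm{(I-tA)^{-1}}_{\XX\to\XX}\le 1$. Setting $\psi_t:=\phi_t-g=tA\phi_t$, both $K$-functional contributions collapse onto one quantity: $\norm{g-\phi_t}_{\XX}=\norm{\psi_t}_{\XX}$ and $t\norm{A\phi_t}_{\XX}=\norm{\psi_t}_{\XX}$, while $t\norm{\phi_t}_{\XX}\le t\norm{g}_{\XX}\le Ct\norm{\lambda}_{L^2(\Gamma)}$ is harmless. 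Using $A_\star g=g$ and $A\phi_t=A_\star\phi_t$ on $\dom A$, a short computation shows $\psi_t=(p,\mathbf q)\in\mathcal V$ solves the resolvent transmission problem $\psi_t-t\,A_\star\psi_t=t\,g$ with the \emph{inhomogeneous} constraint $B\psi_t=-\lambda$; componentwise, with $g=(v,\nabla v)$, this reads $p-t\,\nabla\!\cdot\mathbf q=t\,v$ and $\mathbf q-t\,\nabla p=t\,\nabla v$. Everything therefore reduces to proving $\norm{\psi_t}_{\XX}\le C\,t^{1/2}\norm{\lambda}_{L^2(\Gamma)}$.

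To obtain this I would test the two components against $p$ and $\mathbf q$, add, and integrate by parts. Since $p\in\mathrm V_h$ has $\tracejump p\in Y_h$, since $\mathbf q\in\mathbf W_h$ satisfies $\langle\gamma_n^-\mathbf q,\mu_h\rangle_\Gamma=0$ for all $\mu_h\in Y_h$, and since $B\psi_t=-\lambda$, the interface contribution collapses to $\mp\langle\lambda,\gamma^+ p\rangle_\Gamma$, leaving
\[
\norm{p}_{L^2}^2+\norm{\mathbf q}_{L^2}^2 = t\big[(v,p)_{\R^d\setminus\Gamma}+(\nabla v,\mathbf q)_{\R^d\setminus\Gamma}\big]\mp t\,\langle\lambda,\gamma^+ p\rangle_\Gamma .
\]
The volume term is bounded by $t\norm{\lambda}_{L^2(\Gamma)}(\norm{p}_{L^2}+\norm{\mathbf q}_{L^2})$, using $\norm{v}_{L^2},\norm{\nabla v}_{L^2}\le\norm{g}_{\dom A_\star}\le C\norm{\lambda}_{L^2(\Gamma)}$, and Young's inequality absorbs it into $\tfrac14(\norm{p}_{L^2}^2+\norm{\mathbf q}_{L^2}^2)+C\,t\norm{\lambda}_{L^2(\Gamma)}^2$ for $t\le 1$.

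The genuinely delicate point — and the reason the statement improves on the generic $H^{-1/2}$ lifting bound — is the boundary term $t\langle\lambda,\gamma^+ p\rangle_\Gamma$. Pairing $\lambda$ as an element of $H^{-1/2}(\Gamma)$ against $\gamma^+ p\in H^{1/2}(\Gamma)$ and invoking the trace theorem only yields a constant times $\norm{\lambda}\,\norm{p}_{H^1}$, and because the second equation forces $\norm{\nabla p}_{L^2}\le t^{-1}\norm{\mathbf q}_{L^2}+\norm{\nabla v}_{L^2}$ this produces merely an $O(1)$ bound, destroying the crucial factor $t^{1/2}$. The fix is to keep $\lambda$ in $L^2(\Gamma)$ and use a scaled trace inequality $\norm{\gamma^+ p}_{L^2(\Gamma)}^2\le C\big(\varepsilon\norm{\nabla p}_{L^2}^2+\varepsilon^{-1}\norm{p}_{L^2}^2\big)$ with the choice $\varepsilon=t$; combined with $\norm{\nabla p}_{L^2}\le t^{-1}\norm{\mathbf q}_{L^2}+C\norm{\lambda}_{L^2(\Gamma)}$ this bounds $t\langle\lambda,\gamma^+ p\rangle_\Gamma$ by $\tfrac14(\norm{p}_{L^2}^2+\norm{\mathbf q}_{L^2}^2)+C\,t\norm{\lambda}_{L^2(\Gamma)}^2$ for $t\le 1$. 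Absorbing the two quadratic terms gives $\norm{\psi_t}_{\XX}^2\le C\,t\,\norm{\lambda}_{L^2(\Gamma)}^2$, which is exactly the required estimate; feeding it back closes the $K$-functional bound. The trace and lifting constants depend only on $\Gamma$, hence only on $\Omega$, matching the stated dependence of $C$. The main obstacle is precisely this boundary term: recovering the extra half power of $t$ is where the upgrade from $H^{-1/2}$ to $L^2$ boundary data is consumed.
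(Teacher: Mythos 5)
Your argument is correct, and it takes a genuinely different route from the paper. The paper proves Proposition~\ref{prop:8.1} by exhibiting an explicit quasi-optimal decomposition: it invokes Theorem~\ref{thm:lifting_of_neumann} (proved in the appendix via the Jerison--Kenig $L^2$-Neumann regularity theory and non-tangential maximal function estimates) to build a divergence-conforming field $\widetilde{\mathbf w}$ carrying the normal jump $\lambda$ and bounded in $[L^2,\mathbf H_0(\mathrm{div})]_{1/2,\infty}$, and then splits $\lifting\lambda = (v,\mathbf w-\widetilde{\mathbf w}) + (0,\widetilde{\mathbf w})$ with the first piece in $\dom A$. You instead bound the $K$-functional directly using the resolvent competitor $\phi_t=(I-tA)^{-1}\lifting\lambda$, exploiting that $\pm A$ is maximal dissipative and that $\mathrm{range}\,\lifting\subset\ker(I-A_\star)$ so that $\psi_t=\phi_t-\lifting\lambda$ solves a clean resolvent transmission problem with constraint $B\psi_t=-\lambda$; the crucial half power of $t$ is then extracted from the boundary term by the multiplicative trace inequality with scaling $\varepsilon=t$, combined with the relation $\nabla p=t^{-1}\mathbf q-\nabla v$. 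I checked the energy identity (the $Y_h$-orthogonality of $\gamma_\nu^-\mathbf q$ against $\tracejump{p}\in Y_h$ does collapse the interface terms to $\mp t\langle\lambda,\gamma^+p\rangle_\Gamma$), the absorption with $\delta\sim t$, and the trivial regime $t\ge1$; all steps close, and the constants depend only on the trace inequality and the $H^{-1/2}$ lifting bound of Proposition~\ref{prop:8.2}, hence only on $\Omega$ and not on $Y_h$, as required. What each approach buys: the paper's construction is harder work but produces the reusable harmonic-analysis lifting result of the appendix, whereas yours is elementary and self-contained, needs no machinery beyond dissipativity and a scaled trace inequality on Lipschitz domains, and transfers verbatim to other maximal dissipative operators whose lifting lands in $\ker(I-A_\star)$. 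One small presentational point: you should state explicitly that the $H^{-1/2}\times H^{1/2}$ duality pairing $\langle\lambda,\gamma^+p\rangle_\Gamma$ coincides with the $L^2(\Gamma)$ inner product once $\lambda\in L^2(\Gamma)$, since that identification is exactly where the $L^2$ hypothesis on the data is consumed.
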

\begin{proof}
  
    We will need spaces encoding homogeneous normal traces:
    \begin{align*}
    \mathbf H_0(\mathrm{div},\Omega)&:=\big\{ \bs w \in \mathbf H(\mathrm{div},\Omega): \gamma_{\nu}^- \bs w =0\big\}, \\     
    \mathbf H_0(\mathrm{div},\R^d\setminus\overline{\Omega}))&:=\big\{ \bs w \in \mathbf H(\mathrm{div}, \R^d\setminus\overline{\Omega})): \gamma_{\nu}^+ \bs w =0\big\}.
    \end{align*}  
    By applying Theorem~\ref{thm:lifting_of_neumann}
    to the exterior, and setting $\widetilde{\mathbf{w}} =0 $ inside,
    we can construct a function    
    $
    \widetilde{\mathbf{w}} \in \mathbf H(\mathrm{div},\R^d\setminus\Gamma)
    $,  
    satisfying $\normaltracejump{\widetilde{\mathbf{w}}} = \lambda$
    and
    $$
    \norm{\widetilde{\mathbf{w}}}_{[L^2(\Omega),  \mathbf H_0(\mathrm{div},\R^d\setminus\overline{\Omega})]_{1/2}}
    \lesssim \norm{\lambda}_{L^2(\Gamma)}.
    $$

    Since (up to identifying the product space with the spaces on $\R^d \setminus \Gamma$), it holds that
    $$
    \mathbf H_0(\mathrm{div},{\Omega})
    \times \mathbf H_0(\mathrm{div},\R^d\setminus\overline{\Omega})
    \subseteq {\bs W}_h^0.
    $$
    The product of interpolation spaces equals the interpolation of product spaces (cf. \cite[Sect. 1.18.1]{triebel95}) we can therefore also estimate:
    $$
    \norm{ (0,\widetilde{\mathbf{w}})}_{\mathcal{X}_{1/2}}
    \lesssim 
    \norm{\widetilde{\mathbf{w}}}_{[L^2(\Omega),  \bs W_h^0]_{1/2}} \lesssim \norm{\lambda}_{L^2(\Gamma)}.
    $$

  If we consider $(v,\mathbf{w}) := \lifting\lambda$, then $(v, \mathbf{w} - \widetilde{\mathbf{w}}) \in \dom(\AA)$ by
  construction of the lifting. Thus we have
  \begin{align*}
    \norm{(v,\mathbf{w})}_{\mathcal{X}_{1/2}}
    &\leq  \norm{(v,\mathbf{w} - \widetilde{\mathbf{w}})}_{\mathcal{X}_{1/2}}
    + \norm{ (0, \widetilde{\mathbf{w}})}_{\mathcal{X}_{1/2}} \\
    &\leq  \left(\norm{v}_{H^1(\R^d \setminus \Gamma)} + \norm{\mathbf{w} - \widetilde{\mathbf{w}}}_{H(\operatorname{div},\R^d\setminus \Gamma)}\right)
    + \norm{ (0,\widetilde{\mathbf{w}})}_{\mathcal{X}_{1/2}}.                                            
  \end{align*}  
\end{proof}

\begin{proposition}
If $g\in \mathcal C^2([0,\infty);H^{-1/2}(\Gamma))$ satisfies $g(0)=\dot g(0)=0$, then \eqref{eq:8.3} has a unique strong solution. 
\end{proposition}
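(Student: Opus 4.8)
The plan is to remove the inhomogeneous constraint with the bounded right inverse $\lifting$ of Proposition~\ref{prop:8.2} and then solve the resulting Cauchy problem with homogeneous side constraint by semigroup theory. I would look for the solution $u=(v_h,\mathbf w_h)$ in the form $u=\lifting g+w$, where $w:[0,\infty)\to\dom A$ is the new unknown. Because $\mathrm{range}\,\lifting\subset\ker(\id-\AAstar)$ we have $\AAstar\lifting g=\lifting g$, while $w(t)\in\dom A=\ker B$ gives $\AAstar w=Aw$ and $Bw=0$. Substituting $u=\lifting g+w$ into \eqref{eq:8.3} shows that the three equations for $u$ are equivalent to the single abstract Cauchy problem
\begin{equation*}
\dot w(t)=Aw(t)+\lifting\bigl(g(t)-\dot g(t)\bigr),\qquad w(0)=0,
\end{equation*}
with $w(t)\in\dom A$: indeed $Bu=g$ holds automatically since $\lifting$ is a right inverse of $B$, and $u(0)=0$ reduces to $w(0)=0$ because $g(0)=0$.

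To solve the reduced problem I would use that $A$ generates a contraction $C_0$-semigroup, which follows from Proposition~\ref{prop:8.0} together with the discussion in Section~\ref{sect:maximal_dissipative_operators} (so that $M=1$ and $\omega=0$). Setting $f:=\lifting(g-\dot g)$, the hypothesis $g\in\mathcal C^2([0,\infty);\mathcal M)$ and the boundedness of $\lifting:\mathcal M\to\dom\AAstar$ give $f\in\mathcal C^1([0,\infty);\mathcal X)$. Since the initial value $0$ lies in $\dom A$ and $f$ is continuously differentiable, the mild solution $w(t)=\int_0^t e^{A(t-s)}f(s)\,\mathrm ds$ is in fact a classical solution by the standard regularity theory for inhomogeneous abstract Cauchy problems (see, e.g., \cite[Section~4.2]{pazy}); that is, $w\in\mathcal C^1([0,\infty);\mathcal X)\cap\mathcal C([0,\infty);\dom A)$ and the equation holds pointwise.

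It then remains to assemble $u:=w+\lifting g$ and to verify that it solves \eqref{eq:8.3}. From $w\in\mathcal C^1([0,\infty);\mathcal X)$ and $\lifting g\in\mathcal C^2([0,\infty);\dom\AAstar)$ we get $u\in\mathcal C^1([0,\infty);\mathcal X)$ with $u(t)\in\dom\AAstar$; running the computation of the first paragraph backwards recovers $\dot u=\AAstar u$ and $Bu=g$, while $u(0)=w(0)+\lifting g(0)=0$. The compatibility conditions $g(0)=\dot g(0)=0$ force the data to vanish to first order at $t=0$, so that $f(0)=0$ and hence $\dot w(0)=0$ and $\dot u(0)=0$; this is precisely the regularity that later makes $u$ admissible for Theorems~\ref{theorem:3.2} and~\ref{theorem:3.3} applied to the time-differentiated problem. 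Uniqueness was already observed to follow from the maximal dissipativity in Proposition~\ref{prop:8.0}.

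The main obstacle is the upgrade from a mild to a genuinely strong solution: a priori the Duhamel integral yields only a continuous $\mathcal X$-valued function, and to guarantee $w(t)\in\dom A$ with $\dot w$ continuous one must invest the extra time regularity of the data. This is exactly where $g\in\mathcal C^2$ (hence $f\in\mathcal C^1$) enters, together with $0\in\dom A$ for the initial value. By contrast, the verification that $\lifting$ preserves the relevant structure, namely $\lifting g(t)\in\dom\AAstar$ and $\AAstar\lifting g=\lifting g$, is routine once the lifting of Proposition~\ref{prop:8.2} is available, as is the equivalence of the lifted and original formulations.
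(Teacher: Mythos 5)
Your argument is correct, but it is not what the paper does: the paper's proof of this proposition is a one-line citation, observing that Propositions~\ref{prop:8.0} and~\ref{prop:8.2} put problem~\eqref{eq:8.3} into the abstract framework of \cite{sayas_new_analysis}, which supplies existence and uniqueness. What you have written is, in effect, the standard argument that underlies that framework, carried out explicitly: the substitution $u=\lifting g+w$ correctly exploits $\mathrm{range}\,\lifting\subset\ker(\id-\AAstar)$ to turn the side-constrained problem into the homogeneous Cauchy problem $\dot w=Aw+\lifting(g-\dot g)$, $w(0)=0$; the hypothesis $g\in\mathcal C^2$ is exactly what makes the forcing $\mathcal C^1$-regular so that the Duhamel formula yields a classical solution (Pazy, Section~4.2); $g(0)=0$ is the compatibility condition ensuring $w(0)=0\in\dom A$; and uniqueness follows from the dissipativity in Proposition~\ref{prop:8.0}, as the paper itself notes just after~\eqref{eq:8.3}. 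The only point worth flagging is cosmetic: $\dot g(0)=0$ is not actually needed for existence of the strong solution in your construction (only $g(0)=0$ is), and you correctly identify it as a condition that matters for the later application of Theorems~\ref{theorem:3.2} and~\ref{theorem:3.3} rather than for this proposition per se. Your self-contained version buys independence from the external reference at the cost of length; the paper's version buys brevity by delegating precisely these steps to \cite{sayas_new_analysis}.
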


\begin{proof}
Thanks to Propositions \ref{prop:8.0} and \ref{prop:8.2}, this problem fits in the abstract framework described in \cite{sayas_new_analysis}, which proves existence and uniqueness of solution to \eqref{eq:8.3}.
\end{proof}

Propositions \ref{prop:8.0} -- \ref{prop:8.2} have some consequences. First of all, Assumption \ref{ass:1.1} holds. Secondly,
assuming $g(t) \in L^2(\Gamma)$, any solution to \eqref{eq:1.1} with the above data ($F=0$, $\Xi=g$) is in $\mathcal X_{1/2}$,
and therefore, solutions to \eqref{eq:8.3} take values in $\mathcal X_{1/2}$ as well.
Finally, if $g\in \mathcal C^s([0,\infty];L^2(\Gamma))$ then $\lifting g\in \mathcal C^s([0,\infty];\mathcal X_{1/2})$.

We also need a regularity result that allows us to bound time derivatives of the solution in terms of the data. The continuity condition for the $(s+2)$-th derivative of $g$ in Proposition \ref{prop:8.5} can be relaxed to local integrability, but then the norms on the right-hand side of \eqref{eq:8.7} have to be modified. 

\begin{proposition}\label{prop:8.5}
If $g\in \mathcal C^{s+2}([0,\infty);L^2(\Gamma))$ satisfies $g^{(\ell)}(0)=0$ for $\ell\le s+1$, then the unique solution to \eqref{eq:8.3} satisfies
\begin{itemize}
\item[(a)] $(v_h,\mathbf w_h)\in \mathcal C^{s+1}([0,\infty);\mathcal X)$,
\item[(b)]
  \label{it:prop:8.5:b}
  $(v_h,\mathbf w_h)\in \mathcal C^{s}([0,\infty);\mathcal V)$ and  $(v_h,\mathbf w_h)\in \mathcal C^{s}([0,\infty);\mathcal X_{1/2})$,
\item[(c)] for all $\ell\le s$, there exists $C$, independent of the choice of $Y_h$ such that  for all $t\ge 0$
\begin{equation}\label{eq:8.7}
\| (v_h^{(\ell)}(t),\mathbf w_h^{(\ell)}(t))\|_{\mathcal X_{1/2}}
	\le C\,t\,\sum_{j=\ell}^{\ell+2} \max_{\tau\le t} \| g^{(j)}(\tau)\|_{L^2(\Gamma)}.
\end{equation}
\end{itemize}
\end{proposition}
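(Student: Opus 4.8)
The plan is to remove the inhomogeneous side constraint by the lifting and thereby reduce \eqref{eq:8.3} to an evolution equation with values in $\dom \AA$, to which standard $C_0$-semigroup theory applies. Writing $u:=(v_h,\mathbf w_h)$ and $y:=u-\lifting g$, Assumption~\ref{ass:1.1} (in particular $\mathrm{range}\,\lifting\subset\ker(I-\AAstar)$) gives $By=0$, so $y(t)\in\ker B=\dom\AA$, and a short computation using $\AAstar\lifting g=\lifting g$, $\dot u=\AAstar u$, and $g(0)=0$ yields
\[
\dot y(t)=\AA y(t)+\lifting\big(g(t)-\dot g(t)\big),\qquad y(0)=0 .
\]
Since $\AA$ is maximally dissipative (Proposition~\ref{prop:8.0}), it generates a contraction semigroup $S(t)=e^{t\AA}$, and $y$ is represented by the variation-of-constants formula $y(t)=\int_0^t S(t-\tau)\,\lifting\big(g-\dot g\big)(\tau)\,\mathrm d\tau$; by uniqueness (Proposition~\ref{prop:8.0}) the solution of \eqref{eq:8.3} is then $u=y+\lifting g$.

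For the regularity statements (a) and (b) I would differentiate the Duhamel formula. Because $g^{(\ell)}(0)=0$ for $\ell\le s+1$, the boundary terms produced by differentiating under the integral vanish, and one obtains for $\ell\le s+1$
\[
y^{(\ell)}(t)=\int_0^t S(t-\tau)\,\lifting\big(g^{(\ell)}-g^{(\ell+1)}\big)(\tau)\,\mathrm d\tau .
\]
Since $g\in\mathcal C^{s+2}([0,\infty);L^2(\Gamma))$ and $\lifting$ maps $L^2(\Gamma)$ boundedly into $\mathcal X_{1/2}\subset\mathcal X$, each $y^{(\ell)}$ is continuous with values in $\mathcal X$, which together with $\lifting g\in\mathcal C^{s+1}([0,\infty);\mathcal X)$ gives (a). For (b) I would invoke the identity $\AA y^{(\ell)}=y^{(\ell+1)}-\lifting(g^{(\ell)}-g^{(\ell+1)})$, which is well defined and continuous for $\ell\le s$, to conclude $y^{(\ell)}\in\mathcal C^0([0,\infty);\dom\AA)$; combined with $\AAstar\lifting g^{(\ell)}=\lifting g^{(\ell)}$ this yields $u^{(\ell)}\in\mathcal C^0([0,\infty);\dom\AAstar)=\mathcal C^0([0,\infty);\mathcal V)$, while $\dom\AA\subset\mathcal X_{1/2}$ and Proposition~\ref{prop:8.1} give the $\mathcal X_{1/2}$-regularity.

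For the quantitative bound (c) the key observation is that $S(t)$ is a contraction not only on $\mathcal X$ but also on $\dom\AA$ (since $S(t)$ commutes with $\AA$, so $\|S(t)x\|_{\dom\AA}=\|S(t)x\|_{\mathcal X}+\|S(t)\AA x\|_{\mathcal X}\le\|x\|_{\dom\AA}$), and hence on $\mathcal X_{1/2}=[\mathcal X,\dom\AA]_{1/2}$ by the interpolation estimate \eqref{eq:interpolation_thm_est}; thus $\|S(t)\|_{\mathcal X_{1/2}\to\mathcal X_{1/2}}\le1$. Applying this to the differentiated Duhamel formula together with $\|\lifting\lambda\|_{\mathcal X_{1/2}}\le C\|\lambda\|_{L^2(\Gamma)}$ from Proposition~\ref{prop:8.1} bounds $\|y^{(\ell)}(t)\|_{\mathcal X_{1/2}}$ by $C t\sum_{j=\ell}^{\ell+1}\max_{\tau\le t}\|g^{(j)}(\tau)\|_{L^2(\Gamma)}$. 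For the remaining contribution $\lifting g^{(\ell)}$, I would use $g^{(\ell)}(0)=0$ (valid since $\ell\le s$) and the fundamental theorem of calculus, $\|g^{(\ell)}(t)\|_{L^2(\Gamma)}\le t\max_{\tau\le t}\|g^{(\ell+1)}(\tau)\|_{L^2(\Gamma)}$, so that $\|\lifting g^{(\ell)}(t)\|_{\mathcal X_{1/2}}$ also carries the factor $t$. Adding the two contributions gives \eqref{eq:8.7}, and since both the contraction constant and the lifting norm are independent of $Y_h$, so is $C$.

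The main obstacle is the rigorous justification that the mild solution $y$ is in fact a \emph{classical} solution enjoying the stated regularity: one must verify that the variation-of-constants formula may be differentiated up to order $s+1$ and that the identity $\AA y^{(\ell)}=y^{(\ell+1)}-\lifting(g^{(\ell)}-g^{(\ell+1)})$ holds with continuous right-hand side. This is precisely where the compatibility conditions $g^{(\ell)}(0)=0$, $\ell\le s+1$, are essential: they force the source $f:=\lifting(g-\dot g)$ to satisfy $f^{(j)}(0)=0$ for $j\le s$, so that no boundary contributions obstruct the repeated differentiation and the integrand inherits the full temporal regularity of $g$. The interpolation argument for the $\mathcal X_{1/2}$-contraction of $S(t)$ is the only other point requiring care, but it follows directly from \eqref{eq:interpolation_thm_est}.
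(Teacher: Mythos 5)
Your argument is correct and takes essentially the same route as the paper: the paper's proof of Proposition~\ref{prop:8.5} consists of citing \cite[Theorem 3.1]{sayas_new_analysis} for the regularity and the bound, and, for part (b), of exactly your decomposition $(v_h,\mathbf w_h)=(v_h^0,\mathbf w_h^0)+\lifting g$ with the first summand in $\dom \AA$, combined with Proposition~\ref{prop:8.1}. Your Duhamel derivation is a sound self-contained replacement for that citation --- the homogenization $y=u-\lifting g$, the vanishing of $f^{(j)}(0)$ for $j\le s$ which permits repeated differentiation under the integral, the contractivity of $e^{t\AA}$ on $\mathcal X_{1/2}$ obtained by interpolating via \eqref{eq:interpolation_thm_est}, and the $Y_h$-independence of all constants are all correct, and your version even yields the slightly sharper sum $\sum_{j=\ell}^{\ell+1}$ in \eqref{eq:8.7}.
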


\begin{proof}
  This result follows from \cite[Theorem 3.1]{sayas_new_analysis}.
  To see~\ref{it:prop:8.5:b}, we note that $(v_h,\mathbf{w}_h)$ is constructed
    by writing
    $$
    (v_h(t),\mathbf{w}_h(t))= (v^0_h(t),\mathbf{w}^0_h(t)) + \lifting g(t),
    $$
    with $(v^0_h(t),\mathbf{w}^0_h(t)) \in \dom(\AA)$. The statement then follows from
    Proposition~\ref{prop:8.1}.
\end{proof}

We now consider the RK approximation of \eqref{eq:8.3} in a finite time interval $[0,T]$, providing pairs of stage-values $(V^k_{h,n},\mathbf W^k_{h,n})\in \mathcal X^m$ and step approximations $(v^k_{h,n},\mathbf w^k_{h,n})\in \mathcal X$. We then define
\begin{equation}
\{ U^k_{h,n}\}=(\partial^k)^{-1}\{ V^k_{h,n}\},
	\qquad
	u^k_{h,n}=r(\infty)u^k_{h,n}+\rkb^\top \rkA^{-1}U^k_{h,n}, \quad n\ge 0
\end{equation}
with $u^k_{h,0}=0$ (see Lemma \ref{lemma:2.6}) and $\psi^k_{h,n}:=\tracejump{u^k_{h,n}}$ . 

\begin{proposition}
  \label{prop:convergence_rates_dirichlet}
  For sufficiently smooth  $g$, with RK approximations using a method satisfying Assumption \ref{ass:1.2}, and with $\alpha$ given by \eqref{eq:6.0}, for $nk\le T$ we have the estimates 
  \begin{align}
    \label{eq:8.8}
    \| u_h(t_n)-u^k_{h,n}\|_{L^2(\R^d\setminus\Gamma)}
	&\leq C T^2 k^{\min\{q+3/2+\alpha,p\}} \sum_{\ell=q}^{p+3} 
    \max_{t\le T} \|g^{(\ell)}(t)\|_{L^2(\Gamma)},
  \end{align}
  and
  \begin{multline}
    \| u_h(t_n)-u^k_{h,n}\|_{H^1(\R^d\setminus\Gamma)} +  \| \psi_h(t_n)-\psi^k_{h,n}\|_{H^{1/2}(\Gamma)}\\
    \leq C  T^2 k^{\min\{q+1/2+\alpha,p\}}  \sum_{\ell=q}^{p+3} 
    \max_{t\le T} \|g^{(\ell)}(t)\|_{L^2(\Gamma)}. \label{eq:8.9}   
  \end{multline}

  The constants depend on $\Gamma$ and the Runge-Kutta method, but do not depend on $T$ or on the choice of $Y_h$.
\end{proposition}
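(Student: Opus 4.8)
The plan is to read \eqref{eq:8.8} and \eqref{eq:8.9} as two applications of the abstract theory with interpolation parameter $\mu=1/2$: the $L^2$ estimate for the primitive $u_h=\partial^{-1}v_h$ comes from the integrated estimate of Theorem~\ref{theorem:3.1}, while the $H^1$ and trace estimate comes from the classical (non-integrated) estimates of Propositions~\ref{prop:AMP1} and~\ref{prop:AMP2}. The abstract hypotheses are available: Assumption~\ref{ass:1.1} holds by Propositions~\ref{prop:8.0} and~\ref{prop:8.2}, and since $A$ is maximal dissipative we take $M=1$, $\omega=0$, so that $\rho_k(T)\le 1$ by Lemma~\ref{lemma:discrete_stability} and $k_0$ is unrestricted. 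The reason $\mu=1/2$ is the right choice is Proposition~\ref{prop:8.1}: the lifting satisfies $\lifting\colon L^2(\Gamma)\to\XX_{1/2}$ boundedly with a $Y_h$-independent norm, so that $\lifting g\in\mathcal C^s([0,T];\XX_{1/2})$ whenever $g\in\mathcal C^s([0,T];L^2(\Gamma))$, while the solution itself lies in $\XX_{1/2}$ with the time regularity and bounds of Proposition~\ref{prop:8.5}.

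For \eqref{eq:8.8} I would apply Theorem~\ref{theorem:3.1} to the semigroup solution $u:=(v_h,\mathbf w_h)$ of \eqref{eq:8.3}, which plays the role of the abstract solution with $F=0$ and $\Xi=g$. Then $x:=\partial^{-1}u$ has first component $u_h$, and the step sequence $x^k_n$ built via $(\partial^k)^{-1}$ exactly as in the statement has first component $u^k_{h,n}$, so that $\|u_h(t_n)-u^k_{h,n}\|_{L^2(\R^d\setminus\Gamma)}\le\|x(t_n)-x^k_n\|_{\XX}$. Theorem~\ref{theorem:3.1} with $\mu=1/2$ then yields the rate $k^{\min\{q+3/2+\alpha,p\}}$, since $q+\mu+1+\alpha=q+3/2+\alpha$. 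The data norms $\|u^{(\ell)}\|_{T,1/2}$ and $\|\lifting g^{(\ell)}\|_{T,1/2}$ are controlled, via \eqref{eq:8.7} and Proposition~\ref{prop:8.1}, by $t\sum_j\max_{\tau\le t}\|g^{(j)}(\tau)\|_{L^2(\Gamma)}$; combining the factor $t\le T$ with the prefactor $T$ of Theorem~\ref{theorem:3.1} gives the stated $T^2$, and collecting the derivative orders (the highest being $u^{(p+1)}$, hence $g^{(p+3)}$) gives the sum $\sum_{\ell=q}^{p+3}$.

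The key to \eqref{eq:8.9} is a pair of structural identities that let us estimate the gradient through the classical $\XX$-norm result rather than through a graph norm. At the continuous level, integrating the evolution equation and using the vanishing initial data gives $A_\star x=u$; writing $x=(u_h,\mathbf p_h)$ and recalling $A_\star(v,\mathbf w)=(\nabla\cdot\mathbf w,\nabla v)$, its second component reads $\nabla u_h=\mathbf w_h$. At the discrete level, the first equation of Proposition~\ref{prop:44.5} together with $\partial^k X^k=U^k$ and the vanishing volume data gives the analogue of \eqref{eq:3.90a}, namely $(\mathcal I\otimes A_\star)X^k_n=U^k_n$, whose second component is $\nabla U^k_{h,n}=\mathbf W^k_{h,n}$ at the stage level. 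Applying $\nabla$ to the step recursion defining $u^k_{h,n}$ and inserting $\nabla U^k_{h,n}=\mathbf W^k_{h,n}$ reproduces exactly the step recursion for $\mathbf w^k_{h,n}$ of Proposition~\ref{prop:3.7}, and since both start at zero, induction yields $\nabla u^k_{h,n}=\mathbf w^k_{h,n}$ for all $n$ (no stiff accuracy is required). Consequently $\|\nabla(u_h(t_n)-u^k_{h,n})\|_{L^2}=\|\mathbf w_h(t_n)-\mathbf w^k_{h,n}\|_{L^2}\le\|u(t_n)-u^k_n\|_{\XX}$, which Propositions~\ref{prop:AMP1} and~\ref{prop:AMP2} bound, again with $\mu=1/2$, at the rate $k^{\min\{q+1/2+\alpha,p\}}$. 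Adding the faster $L^2$ bound from \eqref{eq:8.8} produces the $H^1(\R^d\setminus\Gamma)$ estimate at this rate, and the boundedness of $\gamma^\pm\colon H^1(\R^d\setminus\Gamma)\to H^{1/2}(\Gamma)$ converts it into the $H^{1/2}(\Gamma)$ estimate for $\psi_h-\psi^k_{h,n}$; the factors $T^2$ and the sum $\sum_{\ell=q}^{p+3}$ arise as in the previous paragraph.

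I expect the main obstacle to be the correct matching of the discrete objects, i.e.\ establishing the step-level identity $\nabla u^k_{h,n}=\mathbf w^k_{h,n}$ from the stage relation and the recursions of Propositions~\ref{prop:3.7} and~\ref{prop:44.5}; this is precisely what allows the gradient error to be controlled by the $\XX$-norm estimate and avoids invoking the graph-norm estimate of Theorem~\ref{theorem:3.3} here. The remaining work is bookkeeping: verifying the regularity hypotheses of Theorem~\ref{theorem:3.1} and of Propositions~\ref{prop:AMP1}--\ref{prop:AMP2} from Proposition~\ref{prop:8.5}, and checking that every constant is independent of $T$ and of the discrete space $Y_h$.
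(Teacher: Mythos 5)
Your proposal is correct and follows essentially the same route as the paper: Theorem~\ref{theorem:3.1} with $\mu=1/2$ (via Propositions~\ref{prop:8.1} and~\ref{prop:8.5}) for the $L^2$ bound, the identity $\nabla u^k_{h,n}=\mathbf w^k_{h,n}$ together with Propositions~\ref{prop:AMP1}--\ref{prop:AMP2} for the gradient, and the trace theorem for the boundary estimate. The only difference is that you spell out the discrete identity $\nabla u^k_{h,n}=\mathbf w^k_{h,n}$ explicitly (and correctly observe that Theorem~\ref{theorem:3.3} is not actually needed), whereas the paper uses this identity implicitly in its display \eqref{eq:8.13}.
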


\begin{proof}
We will use Theorems \ref{theorem:3.1} and \ref{theorem:3.3} as well as Propositions \ref{prop:AMP1} and \ref{prop:AMP2}. We note that $\rho_k(T)\le 1$ by Lemma \ref{lemma:discrete_stability} and Proposition \ref{prop:8.0}. Also, with the $\lifting$ operator of Proposition \ref{prop:8.2} , we have
\begin{equation}\label{eq:8.11}
\|\lifting\Xi^{(\ell)}\|_{\mathcal X_{1/2}}\le C \|g^{(\ell)}\|_{L^2(\Gamma)},
\end{equation}
with $C$ independent of $Y_h$. The bound~\eqref{eq:8.8}
follows from Theorem \ref{theorem:3.1}, using \eqref{eq:8.7} and \eqref{eq:8.11} to estimate the right-hand side. The bound
\begin{alignat}{6}
\nonumber
\| \nabla u_h(t_n)-\nabla u^k_{h,n}\|_{L^2(\R^d\setminus\Gamma)}
& =\|\mathbf w_h(t_n)-\mathbf w^k_{h,n}\|_{L^2(\R^d \setminus \Gamma)}\\
& \le C T^2 k^{\min\{q+1/2+\alpha,p\}} 
\sum_{\ell=q+1}^{p+3} \max_{t\le T} \| g^{(\ell)}\|_{H^{1/2}(\Gamma)}
\label{eq:8.13}
\end{alignat}
follows from Propositions \ref{prop:AMP1} and \ref{prop:AMP2}, using \eqref{eq:8.7} for the estimate in terms of the data.
The $H^1(\R^d\setminus\Gamma)$ estimate \eqref{eq:8.9} is then a direct consequence of  \eqref{eq:8.8} and \eqref{eq:8.13},
the estimate for $\psi_h-\psi_h^k$ follows from the standard trace theorem.
\end{proof}

\subsection{Scattering}\label{sec:9}
We stay in the geometric setting of the previous section. Assume that $\mathbf d\in \R^d$ is a unit vector (direction of propagation) and that $c\in \R$ is such that $\Omega\subset \{\mathbf x\in \R^d\,:\, \mathbf x\cdot\mathbf d> c\}$. Let $\phi:\R\to \R$ be a function such that $\phi(r)=0$ for all $r\ge c.$ The incident wave $u^{\mathrm{inc}}(\mathbf x,t):=\phi(\mathbf x\cdot\mathbf d-t)$, propagates in the direction $\mathbf d$ at unit speed and has not reached the scatterer given by $\Omega$ at time $t=0$. The data for our problem will be the function $g:[0,T]\to L^2(\Gamma)$ given by
$g(t):=-\partial_\nu u^{inc}(\cdot,t)$.

The scattering problem by a sound-hard obstacle occupying the domain $\Omega$ looks for the scattered field $u:[0,T]\to H^1(\R^d\setminus\overline\Omega)$ satisfying
\[
\ddot u(t)=\Delta u(t), \qquad u(0)=\dot u(0)=0, \qquad \partial^+_\nu u(t)=g(t),
\]
so that $\partial^+_\nu (u+u^{\mathrm{inc}})=0$. (Note that we can take the trace of the normal derivative of the incident wave,
since it is locally smooth.)

A direct formulation for solving this problem is equivalent to an extension of $u$ to the interior domain by zero. This means we solve
\begin{equation}\label{eq:9.1}
\ddot u(t)=\Delta u(t) \,\mbox{ in $\R^d\setminus\Gamma$}, \quad u(0)=\dot u(0)=0, \quad \normalDjump{u(t)}=g(t), \quad  \partial_\nu^- u(t)=0.
\end{equation}

With some additional hypotheses on the growth of $g$
(which is needed to have a well-defined distributional Laplace transform), we can represent the solution to \eqref{eq:9.1} as $u=\mathrm S(\partial)g - D(\partial) \psi$,
where $\psi:=\tracejump{u}$. Note that, to be precise with the use of weak distributional definitions, all functions have to be extended by zero to $t<0$ (we say that they are causal) and the time interval is extended to infinity.

Taking the trace in this representation formula,
the solution of \eqref{eq:9.1} can be found by solving an equation for $\psi$ and then postprocessing with the potential operators:
\begin{equation}\label{eq:9.4}
{\mathrm W}(\partial)\psi=(1/2 - K^t(\partial)) g, \qquad u=\mathrm S(\partial) g - \mathrm{D}(\partial) \psi,
\end{equation}
and we still have that $\psi=\tracejump{u}$.
We can equivalently write \eqref{eq:9.1} and the equivalent \eqref{eq:9.4} by using the variables $v:=\dot u$ and $\mathbf w:=\nabla u$.
We note that $u=\partial^{-1} v$ and $\psi=\partial^{-1} \tracejump{v}$. Here, $(v,\mathbf w)$ solve (we restrict $t$ to the interval $[0,T]$ again)
\[
\dot v(t)=\nabla\cdot\mathbf w(t), 
\quad 
\dot{\mathbf w}(t)=\nabla v(t),
\quad
\normaltracejump{\mathbf{w}(t)}=g(t),
\quad
v(0)=0, \quad \mathbf w(0)=\mathbf 0,
\]
that is, \eqref{eq:8.3} with $Y_h=H^{1/2}(\Gamma)$. For the discretization,
we consider a finite dimensional space $Y_h$ and the Galerkin approximation to \eqref{eq:9.4}, so that we look for
$\psi_h:\,\R\to X_h$ causal such that
\begin{equation}\label{eq:9.5}
\langle  \mathrm W(\partial)\psi_h, \mu \rangle_\Gamma= \langle (1/2 - K^t(\partial)) g, \mu \rangle_\Gamma
	\quad\forall \mu \in Y_h,
\qquad
u_h:=\mathrm S(\partial)g - D(\partial) \psi_h.
\end{equation}
The functions $v_h:=\dot{u}_h$ and $\mathbf w_h:=\nabla u_h$ satisfy \eqref{eq:8.3}. The difference between the solutions of \eqref{eq:9.1} and \eqref{eq:9.5}
can be studied by comparing the solutions to \eqref{eq:8.3} when $Y_h=H^{1/2}(\Gamma)$ and when $Y_h$ is a finite dimensional space, see~\cite{sayas_new_analysis}
for details. For our purposes, it is sufficient to note that we get quasi-optimal estimates for the discretization in space.

Discretization in time is performed by applying convolution quadrature to~\eqref{eq:9.5}. The fully discrete solution reads
  \begin{equation}
    \label{eq:9.6}
  \langle  \mathrm W(\partial^k)\Psi_h, \mu \rangle_\Gamma= \langle (1/2 - K^t(\partial^k)) g, \mu \rangle_\Gamma
  \;\forall \mu \in Y_h,\; \;
  U_h:=\mathrm S(\partial^k)g - D(\partial^k) \Psi_h.
\end{equation}

The approximations $\psi^k_h$ and $u^k_h$ are then computed by the usual post-processing, i.e.
\begin{alignat*}{3}
  \psi^k_{h,0}&:=0, \qquad &\psi^k_{h,n+1}&=r(\infty) \psi^k_{h,n} + \rkb^T \rkA^{-1} \Psi^k_{h,n}, \\
  u^k_{h,0}&:=0, \qquad &u^k_{h,n+1}&=r(\infty) u^k_{h,n} + \rkb^T \rkA^{-1} U^k_{h,n}.
\end{alignat*}

\begin{lemma}
  The sequences $u^k_h$ and $\psi_h^k$ computed via~\eqref{eq:9.6} coincide with the Runge-Kutta
  approximations to \eqref{eq:8.3} and their traces respectively.
\end{lemma}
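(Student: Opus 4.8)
The plan is to carry out the comparison in the $Z$-transform domain, using the single fact that convolution quadrature and the Runge--Kutta discretization of the semigroup \eqref{eq:8.3} are both governed by the \emph{same} discrete symbol $s=\delta(z)/k$. Concretely, I would show that the $Z$-transforms of the CQ--BEM quantities of \eqref{eq:9.6} and of the Runge--Kutta stages for \eqref{eq:8.3} solve one and the same family of Laplace-domain transmission problems, parametrized by the eigenvalues of $\delta(z)/k$; the two identical post-processings then force the step values and traces to agree.

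First I would characterize the Runge--Kutta approximation of \eqref{eq:8.3}. Applying Proposition~\ref{prop:3.1} with $F=0$, $u_0=0$, and $\Xi=g$ to the state $(v_h,\mathbf w_h)$, the stage series satisfy in the $Z$-domain
\[
k^{-1}\delta(z)\,(\widehat V^k_h,\widehat{\mathbf W}^k_h)=(\mathcal I\otimes A_\star)(\widehat V^k_h,\widehat{\mathbf W}^k_h),\qquad
(\mathcal I\otimes B)(\widehat V^k_h,\widehat{\mathbf W}^k_h)=\widehat g^k.
\]
By Lemma~\ref{prop:rk_spectrum_of_delta} we have $\sigma(\delta(z))\subset\C_+$ for $|z|<1$, so I may diagonalize $\delta(z)$ and treat each eigenvalue $s=\delta(z)/k$ as a scalar Helmholtz frequency via the calculus of Definition~\ref{def:2.3}. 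Writing out $A_\star(v,\mathbf w)=(\nabla\cdot\mathbf w,\nabla v)$, eliminating the flux, and setting $\widehat U^k_h:=s^{-1}\widehat V^k_h$ (that is, applying $(\partial^k)^{-1}$) shows that $\widehat U^k_h$ solves the Laplace transform of the transmission problem \eqref{eq:9.1} at frequency $s$, with $\widehat{\mathbf W}^k_h=\nabla\widehat U^k_h$ and with the constraint $B$ reproducing the jump data $\normalDjump{\widehat U^k_h}=\widehat g$ and the interior condition $\partial_\nu^-\widehat U^k_h=0$.

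Second I would invoke the purely frequency-domain equivalence between the transmission problem \eqref{eq:9.1} and the boundary integral reformulation \eqref{eq:9.4}: for each fixed $s\in\C_+$ the representation formula \eqref{eq:representation_formula} yields the unique solution of the transmission problem from the density solving $\mathrm W(s)\psi=(\tfrac12-K^t(s))g$, and that density equals $\tracejump{u}$. This is exactly the identity underlying \eqref{eq:9.4} (see \cite{sayas_new_analysis}), hence it persists when the scalar frequency is taken to be each eigenvalue of $\delta(z)/k$. Since the CQ operators in \eqref{eq:9.6} are \emph{defined} by the substitution $s\mapsto\delta(z)/k$ in the Laplace-domain operators, the density $\widehat\Psi_h$ and field $\widehat U_h$ of \eqref{eq:9.6} coincide with $\tracejump{\widehat U^k_h}$ and $\widehat U^k_h$ from the previous step. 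Finally, the two step recurrences $x_{n+1}=r(\infty)x_n+\rkb^\top\rkA^{-1}X_n$ applied to $\Psi$ and to $U$ are, by Lemma~\ref{lemma:2.6}, precisely the scalar parts of $(\partial^k)^{-1}$, so they agree on both sides and deliver $u^k_h$, $\psi^k_h=\tracejump{u^k_h}$ equal to the Runge--Kutta step values of \eqref{eq:8.3} and their traces.

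The main obstacle is the bookkeeping of the several reconstruction operators $\partial^{-1}$, $(\partial^k)^{-1}$ and the step recurrences together with the matrix-valued nature of $\delta(z)$: one must justify, through the spectral location of Lemma~\ref{prop:rk_spectrum_of_delta} and Definition~\ref{def:2.3}, that the scalar equivalence may be applied eigenvalue-by-eigenvalue and reassembled, and that the densities are well defined because $\mathrm W(\delta(z)/k)$ is invertible on the relevant spectrum. The remaining checks --- that $\widehat{\mathbf W}^k_h=\nabla\widehat U^k_h$ and that $B$ encodes the jump and interior-Neumann data of \eqref{eq:9.1} --- follow directly from the definitions of $A_\star$ and $B$ and require only elementary manipulations.
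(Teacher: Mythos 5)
Your proposal is correct and follows essentially the same route as the paper, which likewise proves the lemma by passing to the $Z$-transform and showing that both the CQ--BEM system and the Runge--Kutta stages solve the same matrix-valued Helmholtz problem \eqref{eq:3.3}, deferring the detailed bookkeeping to the appendix of \cite{cq_schroedinger}. You in fact supply more of the detail (the eigenvalue-by-eigenvalue reduction via Lemma~\ref{prop:rk_spectrum_of_delta} and the identification of the post-processing recurrences through Lemma~\ref{lemma:2.6}) than the paper's own one-line sketch.
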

\begin{proof}
  The details of the computation can be found in the appendix of~\cite{cq_schroedinger}. The basic idea
  is to take the Z-transform and show that both approaches solve the matrix-valued Helmholtz problem~\eqref{eq:3.3}.
\end{proof}

This gives the following immediate corollary, representing an apriori bound for the fully discrete method:
\begin{corollary}
  Let the assumptions of Proposition~\ref{prop:convergence_rates_dirichlet} hold. Then
  for $u_h$ and $\psi_h$, approximated using convolution quadrature, we can estimate:
  \begin{multline}
    \label{eq:8.19}
    \| u_h(t_n)-u^k_{h,n}\|_{1(\R^d\setminus\Gamma)} +  \| \psi_h(t_n)-\psi^k_{h,n}\|_{H^{1/2}(\Gamma)}\\
    \leq C  (1+T^2) k^{\min\{q+1/2+\alpha,p\}}  \sum_{\ell=q}^{p+3} 
    \max_{t\le T} \|g^{(\ell)}(t)\|_{L^2(\Gamma)}.  
  \end{multline}

  The constants depend on $\Gamma$ and the Runge-Kutta method, but do not depend on $T$ or on the choice of $Y_h$.
\end{corollary}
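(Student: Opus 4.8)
The plan is to recognize that this corollary is an \emph{immediate} consequence of the preceding identification Lemma combined with the convergence rates already established in Proposition~\ref{prop:convergence_rates_dirichlet}. First I would invoke that Lemma, which asserts that the fully discrete convolution quadrature quantities $u^k_{h,n}$ and $\psi^k_{h,n}$ produced by~\eqref{eq:9.6} are \emph{exactly} the Runge-Kutta step approximations to the semigroup formulation~\eqref{eq:8.3} (respectively their jump traces). This identification is the conceptual heart of the matter: once it is in place, no further analysis of the boundary integral operators $\mathrm{W}(\partial^k)$, $K^t(\partial^k)$, $\mathrm S(\partial^k)$, $D(\partial^k)$ is needed, because the error is controlled entirely within the time-stepping framework of Section~\ref{sect:error_estimates}.

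With the two families of approximations identified, I would apply estimate~\eqref{eq:8.9} of Proposition~\ref{prop:convergence_rates_dirichlet} verbatim to the pair $(u^k_{h,n},\psi^k_{h,n})$. That proposition already bounds the combined $H^1(\R^d\setminus\Gamma)$ and $H^{1/2}(\Gamma)$ error by $C\,T^2 k^{\min\{q+1/2+\alpha,p\}}\sum_{\ell=q}^{p+3}\max_{t\le T}\norm{g^{(\ell)}(t)}_{L^2(\Gamma)}$, with a constant independent of $T$ and of the choice of $Y_h$. Since $T^2\le 1+T^2$, this bound immediately yields the stated inequality~\eqref{eq:8.19} with the factor $1+T^2$ in place of $T^2$, keeping the same dependence of the constants on $\Gamma$ and the Runge-Kutta method.

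The only point requiring minor care is purely bookkeeping: I must check that the regularity hypotheses carried over by the phrase ``Let the assumptions of Proposition~\ref{prop:convergence_rates_dirichlet} hold''—in particular the smoothness of $g$ and the vanishing of its initial derivatives needed to trigger Proposition~\ref{prop:8.5}, and hence the estimates~\eqref{eq:8.7} and~\eqref{eq:8.11} used inside Proposition~\ref{prop:convergence_rates_dirichlet}—are indeed in force. There is no genuine obstacle here; the entire difficulty of the fully discrete analysis has already been absorbed into the identification Lemma (whose proof is referenced in~\cite{cq_schroedinger}) and into Theorems~\ref{theorem:3.1}--\ref{theorem:3.3} underlying Proposition~\ref{prop:convergence_rates_dirichlet}. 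The corollary therefore reduces to citing these two results and absorbing the harmless adjustment of the $T$-dependence.
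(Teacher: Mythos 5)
Your proposal is correct and matches the paper exactly: the corollary is presented there as an immediate consequence of the identification lemma together with estimate~\eqref{eq:8.9} of Proposition~\ref{prop:convergence_rates_dirichlet}, which is precisely your argument (including the harmless replacement of $T^2$ by $1+T^2$).
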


\begin{remark}
  There is another approach for analyzing convolution quadrature methods, which is based on estimates in the Laplace domain.
  It can be shown that the Neumann-to-Dirichlet map, realized by the boundary integral equations~\eqref{eq:9.6},
  satisfies a bound of the form
  $$ \norm{W(s)^{-1}(1/2 - K^t(s)) \widehat{g}}_{H^{1/2}(\Gamma)}\lesssim \frac{\abs{s}}{\Re(s)} \norm{g}_{H^{-1/2}(\Gamma)},$$
  see~\cite[Appendix 2]{laliena_sayas}.
  Applying the abstract theory of~\cite{BanLM} then implies convergence rate
  $\min(q+1,p)$ for the boundary data $\psi_h$.
  Modifying their proof, one can also get for $g(t)\in L^2(\Gamma)$ that
    $$
    \norm{W(s)^{-1}(1/2 - K^t(s)) \widehat{g}}_{H^{1/2}(\Gamma)}\lesssim \frac{\abs{s}^{1/2}}{\Re(s)} \norm{g}_{L^2(\Gamma)},
    $$
  which would yield the same convergence rate as Corollary~(\ref{eq:8.19}), but without insight into
  the dependence on the end-time $T$.
\end{remark}

\subsection{Numerical example}
\label{sect:scattering_numerics}
We solve~\eqref{eq:9.6} on a ``hollow square'', as depicted in Figure~\ref{fig:snaphshots_simulation}.
The geometry was chosen to be non-convex and not simply connected, in order to
test if the rate observed is a general result, or if our estimates might prove sharp
in some situation.

We prescribe the exact solution as a traveling wave,
given by

\begin{align*}
  u(\mathbf x, t)&:=\phi(\mathbf x \cdot \mathbf{d} - t), \\
  \phi(s)&:=\cos(\pi \,s/2) \, \exp(-4(s_0-s)^2).
\end{align*}
$s_0:=4$ is chosen so that $\phi(0)$ is sufficiently small in the domain. We set $\mathbf{d}:=[\frac{\sqrt{2}}{2},\frac{\sqrt{2}}{2}]^\top$
and solve up to an end time of $T=12$. An approximation of the $H^{1/2}$-error
is computed via
$$\left\langle \mathrm{W}(1) \left( \psi^{k}_{h,n}- \Pi_{L^2} \psi(t_n)\right), \psi^{k}_{h,n}- \Pi_{L^2} \psi(t_n) \right\rangle_{\Gamma},$$
i.e., we compare to the $L^2$-projection of the exact solution. Since we are interested in the convergence rate
with respect to the timestep size $k$, we consider a fixed, but sufficiently fine mesh.

We used 3 and 5 stage Radau~IIA methods, with orders $(q,p)$ of $(3,5)$ and $(5,9)$, respectively
(see~\cite{hairer_wanner_2} for their definition).
Our theory predicts convergence rates  of $4.5$ and $6.5$.
In Figure~\ref{fig:convergence_neumann}, we observe a rate that
is closer to $5$ and $8$. This means that (just like the standard Laplace-domain estimates) our
  estimates do not appear to be sharp in this case. Further investigations into the cause of this phenomenon are required.
 Results trying to explain this phenomenon, initially prompted by the work on this article,
can be found in~\cite{superconvergence} but with a different model problem.

\begin{figure}[htb]
  \centering
  \newcommand{\mywidth}{5cm}
  \begin{subfigure}{0.3\textwidth}
    \includegraphics[width=\mywidth]{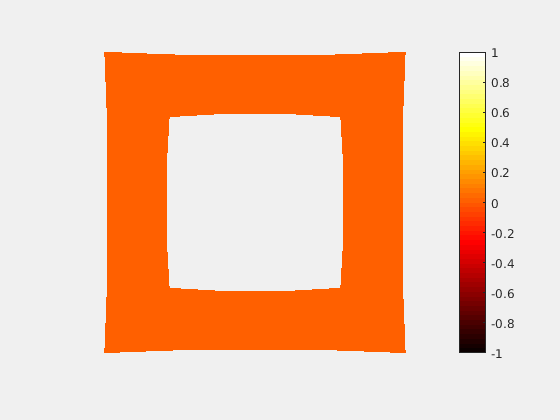}%
    \caption{$t=0$}
  \end{subfigure}%
  \begin{subfigure}{0.3\textwidth}
    \includegraphics[width=\mywidth]{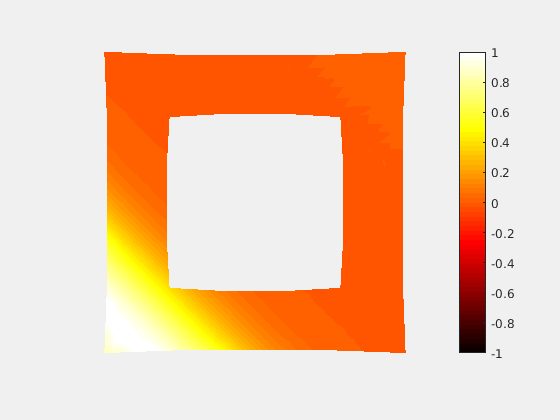}%
    \caption{$t=2.85$}
  \end{subfigure}%
    \begin{subfigure}{0.3\textwidth}
    \includegraphics[width=\mywidth]{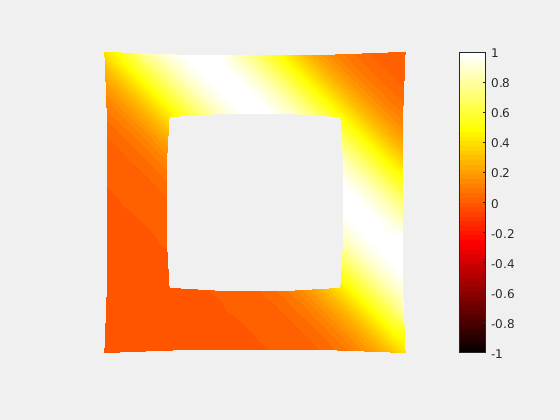}
    \caption{$t=4.45$}
  \end{subfigure}
  \begin{subfigure}{0.3\textwidth}
    \includegraphics[width=\mywidth]{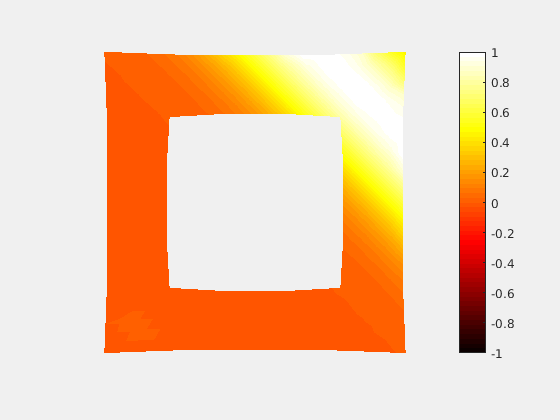}%
    \caption{$t=5.0$}
    \end{subfigure}
    \begin{subfigure}{0.3\textwidth}
    \includegraphics[width=\mywidth]{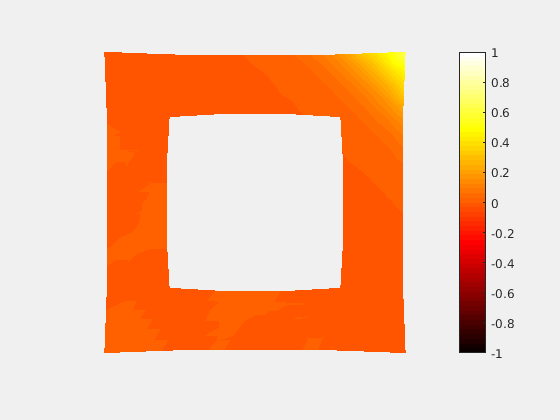}%
    \caption{$t=5.6$}
  \end{subfigure}
  \begin{subfigure}{0.3\textwidth}
    \includegraphics[width=\mywidth]{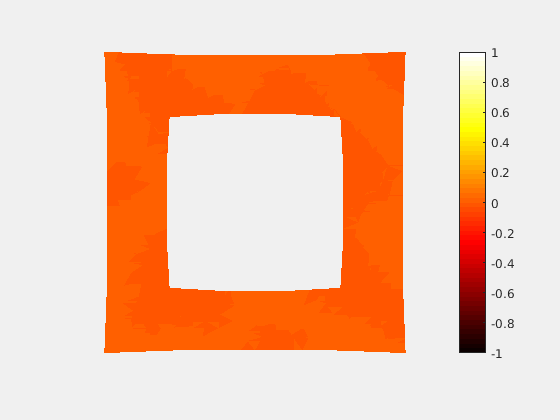}
    \caption{$t=12.0$}
  \end{subfigure}  
  \caption{Snapshots of the simulation at $t=2.85$, $t=4.45$, $t=5.0$, $t=5.6$, $t=12$}
  \label{fig:snaphshots_simulation}
\end{figure}

\begin{figure}[htb]
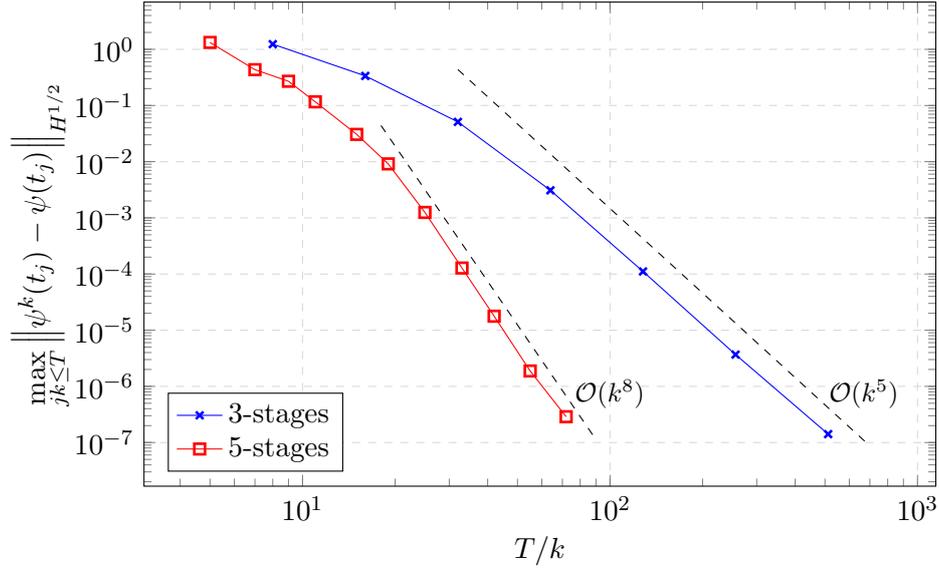

  \centering
    \includeTikzOrEps{convergence_neumann}    
    \caption{Performance of Radau~IIA methods for the wave equation,  cf. Section~\ref{sect:scattering_numerics}}
    \label{fig:convergence_neumann}
\end{figure}

\subsection{The Heat equation}
\label{sect:heat}
In this section, as an example where our estimates turn out to be sharp, we consider a heat conduction problem
and will apply the Theorem~\ref{theorem:3.3} to get convergence of the boundary trace. The physical situation
is a body $\Omega \subset \R^d$ that is held at a given temperature distribution and radiates heat into a medium $\Omega^+:=\R^d \setminus \Omega$.
We make the simplifying assumption that at $t=0$ the temperature is $0$.
Since the problem is posed on an unbounded domain, it is a good candidate for boundary integral equations, while being simple enough
to showcase our more general results.
We only briefly give the mathematical setting. More details and a more involved physical
example can be found in~\cite{heateq}.
The setting is as follows: find $u: \R_+ \to H^1_{\Delta}(\Omega^+)$ such that
\begin{subequations}
  \label{eq:heat_eqn}
  \begin{alignat}{3}
    \dot{u} &= \laplace u  &\quad  & \text{in } \R^d \setminus \overline{\Omega},  \label{eq:heat_eqn:pde}\\
    u(t)|_{\Gamma}&= g(t) & \quad &\text{on } \Gamma:=\partial \Omega,  \label{eq:heat_eqn:bc} \\
    u(0)&= 0 &\quad  & \text{in } \R^d \setminus \overline{\Omega}.  \label{eq:heat_eqn:ic}
  \end{alignat}
\end{subequations}
It is well known that $\laplace$ with homogeneous Dirichlet boundary conditions
generates an analytic semigroup (see e.g.~\cite[Section 7.2]{pazy})
on $L^2(\R^d \setminus \overline{\Omega})$. The rest of our assumptions are also easily checked. We summarize:
\begin{enumerate}[(i)]
\item $\dom(\AAstar)=\{u \in H^1(\R^d \setminus \overline{\Omega}): \laplace u \in L^2(\R^d \setminus \overline{\Omega}) \}$,
\item $B: H^1(\R^d \setminus \overline{\Omega}) \to H^{1/2}(\Gamma)=:\mathcal{M}, \; Bv := \gamma^+ v$ (using the standard trace operator).
\end{enumerate}

In order to derive the boundary integral formulation, we take the Laplace transform of~\eqref{eq:heat_eqn:pde}, giving
for $\kappa:=\sqrt{s}$:
\begin{align*}
   - \laplace \widehat{u}(s) + \kappa^2 \widehat{u}(s) &= 0,
\end{align*}
which is Helmholtz's equation for a complex wave number $\kappa$. We make an ansatz of the
form $\widehat{u}=S(\kappa) \widehat{\lambda}$ for some unknown density $\widehat{\lambda}$, which can be determined by
applying the trace operator, giving the equation $V(\kappa)\widehat{\lambda}=\ltrafo(g)$.

Transforming back, and using the definition $V_{\kappa}(s):=V(\sqrt{s})$, we get the formulation:
\begin{align*}
  \left[V_{\kappa}(\partial) \lambda\right](t) &= g(t) \qquad \forall t>0.
\end{align*}
The solution $u$ can then be recovered by computing $u=S_{\kappa}(\partial)$, where $S_\kappa(s):=S(\sqrt{s})$.

The discrete version of this is then given by solving
\begin{align}
  \label{eq:heat_eq_bem}
  V_{\kappa}(\partial^k) \Lambda^k &= g.
\end{align}

It can be shown that plugging the discrete solution into the representation formula
$U^k:=S_{\kappa}(\partial^k) \Lambda^k$ gives back the Runge-Kutta approximation of~\eqref{eq:heat_eqn}.
The approximations at the endpoints $t_n=n\,k$, denoted by $\lambda^k$ and $u^k$ respectively can be computed by the usual post-processing.
We refer to the appendix of~\cite{cq_schroedinger} for an analogous computation in the context of the Schr\"odinger equation, which easily transfers to our situation.
For simplicity, we do not consider any discretization in space. A Galerkin approach could easily be included into the analysis,
analogously to Section~\ref{sect:exotic_transmission}.

We need the following analog of Proposition~\ref{prop:8.1}:
\begin{proposition}
  \label{prop:interpolation_space_heat}
  For $\mu \in [0,1/4]$, we have $\dom(\AAstar) \subseteq [L^2(\R^d \setminus \overline{\Omega}), \dom(\AA)]_{\mu,\infty}$.
\end{proposition}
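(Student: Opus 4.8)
The plan is to reduce to the endpoint $\mu=1/4$ and then use a resolvent characterisation of the interpolation space together with a boundary-layer lifting of the trace. By the nesting of the scale ($[\cdot,\cdot]_{\mu_2}\subseteq[\cdot,\cdot]_{\mu_1}$ for $\mu_1\le\mu_2$) it suffices to prove the case $\mu=1/4$, the case $\mu=0$ being trivial. Abbreviate $\Omega^+:=\R^d\setminus\overline\Omega$ and $L^2:=L^2(\Omega^+)$. Since $\dom(\AAstar)\subseteq H^1(\Omega^+)$, I would in fact prove the slightly stronger embedding $H^1(\Omega^+)\hookrightarrow[L^2,\dom(\AA)]_{1/4,\infty}$. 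Because $\AA$ is $-1$ times the self-adjoint Dirichlet Laplacian $L\ge0$, its spectrum lies in $(-\infty,0]$ and it generates a bounded analytic semigroup, so I may invoke the standard description (see e.g.\ \cite{triebel95} or \cite[App.\ B]{mclean})
\[
[L^2,\dom(\AA)]_{1/4,\infty}=\Big\{u\in L^2:\ \sup_{\lambda>0}\lambda^{1/4}\,\norm{\AA(\lambda-\AA)^{-1}u}_{L^2}<\infty\Big\}.
\]
Thus the goal becomes the uniform bound $\norm{\AA(\lambda-\AA)^{-1}u}_{L^2}\lesssim\lambda^{-1/4}\norm{u}_{H^1}$, and since $\norm{\AA(\lambda-\AA)^{-1}}_{L^2\to L^2}=\sup_{\mu\ge0}\mu/(\lambda+\mu)\le1$, the range $0<\lambda<1$ is immediate and I only need $\lambda\ge1$.

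Fix $u\in H^1(\Omega^+)$, set $g:=\gamma^+u\in H^{1/2}(\Gamma)$, and let $u_\lambda:=(\lambda-\AA)^{-1}u\in\dom(\AA)$, so that $\lambda u_\lambda-\laplace u_\lambda=u$, $\gamma^+u_\lambda=0$, and $\AA(\lambda-\AA)^{-1}u=\laplace u_\lambda=\lambda u_\lambda-u$. The heart of the argument is a $\lambda$-dependent, concentrated lifting of the boundary datum. I would introduce $w_\lambda\in H^1(\Omega^+)$, the energy minimiser of $\lambda\norm{v}_{L^2}^2+\norm{\nabla v}_{L^2}^2$ over $\{v\in H^1:\gamma^+v=g\}$, equivalently the solution of $\lambda w_\lambda-\laplace w_\lambda=0$ in $\Omega^+$ with $\gamma^+w_\lambda=g$. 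Comparing its energy with that of a scaled lifting $E_\lambda g$ of $g$ (obtained chartwise from the half-space multiplier $e^{-\sqrt{\lambda+|\xi'|^2}x_d}$, or from an $e^{-\sqrt\lambda\,\mathrm{dist}(\cdot,\Gamma)}$ cutoff of a fixed extension) satisfying $\norm{E_\lambda g}_{L^2}\lesssim\lambda^{-1/4}\norm{g}_{H^{1/2}(\Gamma)}$ and $\norm{\nabla E_\lambda g}_{L^2}\lesssim\lambda^{1/4}\norm{g}_{H^{1/2}(\Gamma)}$ for $\lambda\ge1$, the minimisation property yields the boundary-layer bounds
\[
\norm{w_\lambda}_{L^2}\lesssim\lambda^{-1/4}\norm{g}_{H^{1/2}(\Gamma)},\qquad
\norm{\nabla w_\lambda}_{L^2}\lesssim\lambda^{1/4}\norm{g}_{H^{1/2}(\Gamma)}.
\]

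With this in hand I split $u=(u-w_\lambda)+w_\lambda$, noting that $u-w_\lambda\in H^1_0(\Omega^+)=\dom\big((-\AA)^{1/2}\big)$, and correspondingly $\laplace u_\lambda=\AA(\lambda-\AA)^{-1}(u-w_\lambda)+\AA(\lambda-\AA)^{-1}w_\lambda$. For the first term I use $\norm{(-\AA)^{1/2}(\lambda-\AA)^{-1}}_{L^2\to L^2}=\sup_{\mu\ge0}\sqrt\mu/(\lambda+\mu)=(2\sqrt\lambda)^{-1}$ and $\norm{(-\AA)^{1/2}(u-w_\lambda)}_{L^2}=\norm{\nabla(u-w_\lambda)}_{L^2}$ to get
\[
\norm{\AA(\lambda-\AA)^{-1}(u-w_\lambda)}_{L^2}\lesssim\lambda^{-1/2}\big(\norm{\nabla u}_{L^2}+\norm{\nabla w_\lambda}_{L^2}\big)\lesssim\lambda^{-1/2}\norm{\nabla u}_{L^2}+\lambda^{-1/4}\norm{g}_{H^{1/2}(\Gamma)}.
\]
For the second term the uniform bound $\norm{\AA(\lambda-\AA)^{-1}}_{L^2\to L^2}\le1$ together with the layer estimate gives $\norm{\AA(\lambda-\AA)^{-1}w_\lambda}_{L^2}\lesssim\norm{w_\lambda}_{L^2}\lesssim\lambda^{-1/4}\norm{g}_{H^{1/2}(\Gamma)}$. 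Adding the two and using the trace bound $\norm{g}_{H^{1/2}(\Gamma)}\lesssim\norm{u}_{H^1}$, I obtain $\lambda^{1/4}\norm{\AA(\lambda-\AA)^{-1}u}_{L^2}\lesssim\norm{u}_{H^1}$ uniformly in $\lambda\ge1$, which is exactly the required estimate; combined with the nesting reduction this proves the proposition.

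The main obstacle is the boundary-layer lifting step. A $\lambda$-independent trace lifting only reproduces the rates of the finer space $[L^2,\dom\AA]_{1/4,2}=\dom\big((-\AA)^{1/4}\big)$, which is the Lions--Magenes space and does \emph{not} contain $H^1$; the endpoint $\mu=1/4$ is reached only by exploiting the weak (supremum, $\infty$) nature of the interpolation norm together with the $\lambda^{\pm1/4}$ scaling of the layer of width $\lambda^{-1/2}$. Making the scaled lifting $E_\lambda g$ with the stated norms rigorous on a general Lipschitz boundary (via a partition of unity, bi-Lipschitz charts and the multiplicative trace inequality) is the one technical point that needs care, but it is classical.
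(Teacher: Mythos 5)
Your argument is correct, but it takes a genuinely different route from the paper's. The paper stays entirely at the level of function spaces: it observes $H^2_0(\R^d\setminus\overline{\Omega})\subseteq\dom(A)$ and $\dom(A_\star)\subseteq H^1(\R^d\setminus\overline{\Omega})$, embeds $H^1$ into the Besov space $B^{2\mu}_{2,1}$, applies Theorem~\ref{thm:interpolation_spaces_coincide} to pass to $\widetilde{B}^{2\mu}_{2,\infty}=[L^2,H^1_0]_{2\mu,\infty}$, and concludes with the reiteration identity $[L^2,H^1_0]_{2\mu,\infty}=[L^2,H^2_0]_{\mu,\infty}$ from \cite[Theorem B.9]{mclean}. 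You instead argue at the level of the operator, via the resolvent characterization of $(L^2,\dom A)_{1/4,\infty}$, the identification $H^1_0=\dom\big((-A)^{1/2}\big)$, and a $\lambda$-dependent trace lifting concentrated in a layer of width $\lambda^{-1/2}$. Both proofs ultimately rest on the same quantitative fact --- a boundary correction supported in a strip of width $s$ costs only $s^{1/2}$ in $L^2$, which is precisely the strip estimate the paper imports from \cite{LMWZ10} --- and your scaled lifting $E_\lambda g$ plays the role of the cutoff $\chi_t v(t)$ in the paper's proof of Theorem~\ref{thm:interpolation_spaces_coincide}. The paper's route buys modularity (the Besov inclusion is a pure function-space statement, reused for the $\mathbf H(\mathrm{div})$ lifting in Theorem~\ref{thm:lifting_of_neumann}) and avoids both the form-domain identification and the sectorial resolvent calculus; your route avoids the Besov and reiteration machinery and the space $H^2_0$ altogether, at the price of constructing $E_\lambda$ on a Lipschitz boundary --- the one technical point you rightly flag, and which reduces to the same strip estimate $\norm{v}_{L^2(\Omega_t)}\lesssim t^{1/2}\norm{v}_{H^1(\Omega)}$, so no genuine gap remains.
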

\begin{proof}
  It is easy to see that $H^2_0(\R^d \setminus \overline{\Omega}) \subseteq \dom(\AA)$.
  
    Using the Besov spaces introduced in Appendix~\ref{appendix:interpolation_of_sobolev},
    we can write, if $\mu\leq 1/4$:
  \begin{align*}
    H^1(\R^d \setminus \overline{\Omega})
    &\subseteq B^{2\mu}_{2,1}(\R^d \setminus \overline{\Omega}) 
     \stackrel{Thm~\ref{thm:interpolation_spaces_coincide}}{\subseteq} 
    \widetilde{B}^{2\mu}_{2,\infty}(\R^d \setminus \overline{\Omega}) 
    =[L^2(\R^d \setminus \overline{\Omega}), H_0^1(\R^d \setminus \overline{\Omega})]_{2\mu,\infty} \\
    &=[L^2(\R^d \setminus \overline{\Omega}), H_0^2(\R^d \setminus \overline{\Omega})]_{\mu,\infty},
  \end{align*}
  where in the last step, we used~\cite[Theorem B.9]{mclean}.
\end{proof}

The convergence of our numerical method can then be analyzed quite easily using
Proposition~\ref{prop:AMP2} and Theorem~\ref{theorem:3.3}.
\begin{theorem}
  \label{thm:convergence_heat}
  Let $g \in \mathcal{C}^{p+3}([0,T], H^{1/2}(\Gamma))$ with $g^{(j)}(0)=0$ for $j=0,\dots p+2$.
  Let $p$ and $q$ denote the classical and stage order of the Runge-Kutta method used.
  Then the following estimate holds for the post-processed approximation:
  \begin{align}
    \label{eq:10.3}
    \norm{u^{k}(t_n) - u(t_n)}_{L^2(\R^d\setminus \overline{\Omega})}&\leq C (1+ T^2)  k^{\min(q+\alpha+1/4,p)}\sum_{\ell=q+1}^{p+2}{\norm{g^{(\ell)}}_{H^{1/2}(\Gamma)}}.
  \end{align}
  Assume that the Runge-Kutta method used for discretization is stiffly accurate. Then the following estimates hold for
  the $H^1$-norm:
  \begin{align}
    \label{eq:10.4}
    \norm{u^{k}(t_n) - u(t_n)}_{H^1(\R^d\setminus \overline{\Omega})}&\leq C (1+T^2)  k^{r_1}\sum_{\ell=q+1}^{p+3}{\norm{g^{(\ell)}}_{H^{1/2}(\Gamma)}},
  \end{align}
  with
  \begin{align*}
    r_1:=\begin{cases}
      q+ \alpha - 1/4 & \text{ for } q< p-1, \\
      q -1/4 & \text{ for } q= p-1 \text{ and $\alpha=0$}, \\
      q+ 5/8  & \text{ for } q= p-1 \text{ and $\alpha=1$},\\   
      q + \frac{\alpha-1}{2} & \text{ for } q=p.
      \end{cases}
  \end{align*}

  And for the density, we get:
  \begin{align}
    \label{eq:10.5}
    \norm{\lambda^{k}(t_n) - \lambda(t_n)}_{H^{-1/2}(\Gamma)}&\leq C 1+T^2  k^{r_{\lambda}}\sum_{\ell=q}^{p+1}{\norm{g^{(\ell)}}_{H^{1/2}(\Gamma)}},
  \end{align}
  where the rate $r_{\lambda}$ is given by:
  \begin{align*}
    r_{\lambda}:=\begin{cases}
      q+ \alpha - 1/2 & \text{ for } q< p-1, \\
      q - 1/2 & \text{ for } q= p-1 \text{ and $\alpha=0$}, \\
      q+ \frac{7}{16}  & \text{ for } q= p-1 \text{ and $\alpha=1$},\\   
      q + \frac{3}{4}  (\alpha-1)  & \text{ for } q=p.
      \end{cases}
  \end{align*}
\end{theorem}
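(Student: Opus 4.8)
The plan is to recast \eqref{eq:heat_eqn} as the abstract problem \eqref{eq:1.1} with $F=0$, $\Xi=g$, $u_0=0$, and then to feed Propositions~\ref{prop:AMP1}--\ref{prop:AMP2} and Theorem~\ref{theorem:3.3} into two interpolation arguments. The enabling point is the choice of the regularity index $\mu=1/4$, justified by Proposition~\ref{prop:interpolation_space_heat}: the exact solution lies in $\dom(\AAstar)\subseteq\mathcal X_{1/4}$, and the same statement applied to $\lifting g$ shows that $\lifting$ maps $H^{1/2}(\Gamma)$ boundedly into $\mathcal X_{1/4}$ (the heat analogue of Proposition~\ref{prop:8.1}). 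Since $\AA$ (the Dirichlet Laplacian) is maximal dissipative, Lemma~\ref{lemma:discrete_stability} gives $\rho_k(T)\le 1$ and leaves $k_0$ unrestricted. First I would establish the time regularity demanded by the abstract theorems: using the compatibility $g^{(j)}(0)=0$ for $j\le p+2$ and the smoothing of the analytic semigroup generated by $\AA$, one proves the heat analogue of Proposition~\ref{prop:8.5}, namely $u\in\mathcal C^{p+2}([0,T];\mathcal X_{1/4})$ and $\lifting g\in\mathcal C^{p+2}([0,T];\mathcal X_{1/4})$ with all norms controlled by $\sum_\ell\|g^{(\ell)}\|_{H^{1/2}(\Gamma)}$. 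In particular $\dot u(0)=\laplace u(0)=0$, so the hypotheses $u_0=0$, $\dot u(0)=0$ of Theorems~\ref{theorem:3.2}--\ref{theorem:3.3} are met.

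For \eqref{eq:10.3} I would apply Proposition~\ref{prop:AMP1} (if $\alpha=0$) and Proposition~\ref{prop:AMP2} (if $\alpha=1$) with $\mu=1/4$ and $F=0$. This yields the $L^2$ rate $r_{L^2}:=\min\{q+\alpha+1/4,p\}$, while the factors $T\rho_k(T)$ together with the factor $t\le T$ hidden in the regularity bound produce the stated $(1+T^2)$.

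For the $H^1$-bound \eqref{eq:10.4} I would exploit stiff accuracy: the step value equals the last stage, which satisfies the boundary condition exactly because $c_m=1$, so $\gamma^+(u(t_n)-u^k_n)=0$ and the error $e:=u(t_n)-u^k_n$ lies in $\ker B\cap\dom(\AAstar)=\dom(\AA)=\mathcal X_1$. Theorem~\ref{theorem:3.3} then controls $\|\AAstar e\|_{L^2}=\|\AA e\|_{L^2}$ at rate $r_A:=\min\{q+1/4,p\}-1+\alpha$, so $e$ is bounded in $\mathcal X_1$ at rate $r_A$ and in $\mathcal X_0$ at rate $r_{L^2}$. By elliptic regularity ($\dom\AA\hookrightarrow H^2$) we have $\mathcal X_{1/2}\hookrightarrow H^1$, whence \eqref{eq:interpolation_thm_est} at $\theta=1/2$ gives $\|e\|_{H^1}\lesssim\|e\|_{\mathcal X_0}^{1/2}\|e\|_{\mathcal X_1}^{1/2}\lesssim k^{(r_{L^2}+r_A)/2}$. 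Evaluating $\tfrac12(r_{L^2}+r_A)$ with the caps at $p$ in the cases $q<p-1$, $q=p-1$, $q=p$ reproduces the piecewise exponent $r_1$.

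Finally, for the density \eqref{eq:10.5} I would write $\lambda(t_n)-\lambda^k_n=\normalDjump{e}$ and bound this normal-trace jump by interpolating the normal-trace operator $N:e\mapsto\partial_\nu e$ between its two natural endpoints: $N:\mathcal X_1=\dom\AA\subseteq H^2\to H^{1/2}(\Gamma)$, and $N:H^1\to H^{-3/2}(\Gamma)$ (the distributional normal trace of the $L^2$ field $\nabla e$, whose divergence $\laplace e\in H^{-1}$), together with $\mathcal X_{1/2}\hookrightarrow H^1$. Reiteration and \eqref{eq:interpolation_thm_est} at $\theta=1/2$ give $N:\mathcal X_{3/4}\to H^{-1/2}(\Gamma)$, and interpolating $e$ at $\theta=3/4$ yields the rate $\tfrac14 r_{L^2}+\tfrac34 r_A=r_\lambda$, again with the caps producing the four cases. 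The main obstacle I anticipate is the regularity and compatibility bookkeeping that supplies the abstract theorems with $\mathcal X_{1/4}$-valued data (the heat analogue of Proposition~\ref{prop:8.5}); the interpolated trace theorem $\mathcal X_{3/4}\to H^{-1/2}(\Gamma)$, which is exactly where the graph-norm estimate of Theorem~\ref{theorem:3.3} becomes indispensable, is the other delicate ingredient, while the remaining steps are assembly of already-established estimates.
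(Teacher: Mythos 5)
Your treatment of \eqref{eq:10.3} coincides with the paper's ($\mu=1/4$ via Proposition~\ref{prop:interpolation_space_heat}, $\rho_k(T)\le 1$, regularity transferred from $g$ as in Proposition~\ref{prop:8.5}), and your target exponents for all three estimates are arithmetically consistent with the paper's. The problems lie in how you justify the $H^1$ and density bounds, and for the density the gap is genuine.

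For \eqref{eq:10.4} you invoke elliptic regularity in the form $\dom \AA\hookrightarrow H^2$. The domain here is the exterior of a bounded \emph{Lipschitz} set, where the Dirichlet Laplacian does not have $H^2$-regularity; this is precisely why Proposition~\ref{prop:interpolation_space_heat} only reaches $\mu=1/4$. The multiplicative inequality you want, $\norm{e}_{H^1}\lesssim \norm{e}_{\mathcal X_0}^{1/2}\norm{e}_{\mathcal X_1}^{1/2}$, is nevertheless true, but it should be obtained directly: since stiff accuracy gives $\gamma^+ e(t_n)=0$, integration by parts yields $\norm{\nabla e}_{L^2}^2=-(\laplace e,e)_{L^2}\le\norm{\laplace e}_{L^2}\norm{e}_{L^2}$, which is exactly what the paper does, and no interpolation-space embedding ($\mathcal X_{1/2}\hookrightarrow H^1$ in the $\infty$-index, which is itself doubtful) is needed. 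So this step is reparable, but the stated justification would fail.

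For \eqref{eq:10.5} the interpolated-trace argument does not go through. Neither endpoint of your operator $N:e\mapsto\partial_\nu e$ is a valid bounded map on a Lipschitz domain: $N:\dom\AA\to H^{1/2}(\Gamma)$ again presupposes $H^2$-regularity, and $N:H^1\to H^{-3/2}(\Gamma)$ is not well defined, since the distributional normal trace of an $L^2$ vector field requires its divergence in $L^2$ (or at least a pairing against traces of admissible test functions, which $\laplace e\in H^{-1}$ does not provide). With the only legitimate endpoint, $\partial_\nu:\{u\in H^1:\laplace u\in L^2\}\to H^{-1/2}(\Gamma)$, one only recovers the graph-norm rate $r_A=\min\{q+1/4,p\}-1+\alpha$, which falls short of $r_\lambda$ by $1/4$. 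The paper gains that extra quarter power differently: it tests $\lambda-\lambda^k$ against $\xi\in H^{1/2}(\Gamma)$ via a lifting $v$, integrates by parts to get the bound $\bigl(k^{1/2}\norm{\laplace e}_{L^2}+\norm{\nabla e}_{L^2}\bigr)\bigl(k^{-1/2}\norm{v}_{L^2}+\norm{\nabla v}_{L^2}\bigr)$, and then chooses a $k$-dependent lifting with $k^{-1/2}\norm{v}_{L^2}+\norm{\nabla v}_{L^2}\lesssim k^{-1/4}\norm{\xi}_{H^{1/2}(\Gamma)}$ (as in \cite[Proposition 2.5.1]{sayas_book}). That duality-plus-optimized-lifting step is the missing idea; without it, or a correct substitute for your interpolation endpoints, the claimed rate $r_\lambda$ is not established.
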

\begin{proof}
  We first note that we can control the derivatives $u^{(\ell)}$ by the data. This can be done completely analogous
  to Proposition~\ref{prop:8.5} by the techniques of~\cite{sayas_new_analysis}. The estimates read:
  \begin{align*}
    \|{u^{(\ell)}(t)}\|_{L^2(\R^d \setminus \overline{\Omega})}&\leq C t \sum_{j=\ell}^{\ell+1}\max_{\tau \leq t} \|{g^{(j)}}\|_{H^{1/2}(\Gamma)}.
  \end{align*}
  
  For simplicity of notation, we only consider the case $q<p-1$. All the other cases follow analogously but giving different rates when applying
  the abstract theory.  
  By Proposition~\ref{prop:interpolation_space_heat}, we can apply Propositions~\ref{prop:AMP1} or~\ref{prop:AMP2} with $\mu=1/4$, depending on whether
  we are in the setting $\alpha=0$ or $\alpha=1$. This gives estimate~\eqref{eq:10.3}.

  Applying Theorem~\ref{theorem:3.3}, we get the following
  convergence in the graph norm of $\AA_{\star}$:
  \begin{align}
    \label{eq:10.6}
    \|{\laplace u^{k}(t_n) - \laplace u(t_n)}\|_{L^2(\R^d \setminus \overline{\Omega})}&\leq C
                                                                                              (1+T^2)  k^{q+\alpha-1+1/4}\sum_{\ell=q}^{p+3}{\|{g^{(\ell)}}\|_{H^{1/2}(\Gamma)}}.
  \end{align}
  Since $\gamma^+ u^{k}(t_n) = \gamma^+ u(t_n)$, integration by parts and the Cauchy-Schwarz inequality give:
  \begin{align*}
    \|{\nabla u^{k}(t_n) - \nabla u(t_n)}\|_{L^2(\R^d \setminus \overline{\Omega})}^2
    &=-\big( \laplace u^{k}(t_n) - \laplace u(t_n), u^{k}(t_n) -  u(t_n)\big)_{L^2(\R^d \setminus \overline{\Omega})} \\
      &\leq \|{ \laplace u^{k}(t_n) - \laplace u(t_n)}\|_{L^2(\R^d \setminus \overline{\Omega})}\|{u^{k}(t_n) -  u(t_n)}\|_{L^2(\R^d \setminus \overline{\Omega})}.
  \end{align*}
  Estimate~\eqref{eq:10.4} then follows from \eqref{eq:10.3} and~\eqref{eq:10.6}.
  For the estimate~\eqref{eq:10.5} of the density, we fix $\xi \in H^{1/2}(\Gamma)$, and let $v$ denote a lifting to $H^1(\R^d)$.
  We calculate
  \begin{align*}
    \big<{\lambda -\lambda^k,\xi}\big>_{\Gamma}
    &=\big(-\laplace u + \laplace u^k,v\big)_{L^2(\Omega)} + \big(\nabla u - \nabla u^k,\nabla v\big)_{L^2(\R^d \setminus \overline{\Omega})} \\
    &\leq
      \begin{multlined}[t][11cm]
        \big( k^{1/2}\,\|{\laplace u - \laplace u^k}\|_{L^2(\Omega)} + \|{\nabla u - \nabla u^k}\|_{L^2(\R^d \setminus \overline{\Omega})}\big) \\ 
        \times \big(k^{-1/2} \|{v}\|_{L^2(\Omega)}  + \|{\nabla v}\|_{L^2(\R^d \setminus \overline{\Omega})}\big).
        \end{multlined}
  \end{align*}
  We are still free to pick the precise lifting $v$. Doing so as in \cite[Proposition 2.5.1]{sayas_book}, we get
  \[
    \inf\{ k^{-1/2}\|v\|_{L^2(\R^d)}+\|\nabla v\|_{L^2(\R^d)} :  v\in H^1(\R^d),\,\gamma v=\xi\}
    \lesssim \max\{1,k^{-1/4}\}\| \xi\|_{H^{1/2}(\Gamma)}.
  \]

  The result then follows from the previous estimates.
\end{proof}

\subsubsection{Numerical example}
In order to demonstrate that the estimate~\eqref{eq:10.5} is sharp, we consider a simple model problem.
Following~\cite{sauter_veit}, we take $\Omega$ to be the unit sphere and
consider a right hand side $g(x,t)$ of the form
$$
g(x,t):=\psi(t)Y^m_n(x),
$$
where $Y^m_n$ is the spherical harmonic of degree $n$ and order $m$. It is well-known
that the spherical harmonics are eigenfunctions of the pertinent boundary integral operators.
Most notably for us, we have
\begin{align*}
V(s) Y^m_n &= \mu_n(s)  Y^m_n \quad \text{with } \quad  \mu_n:=-s \,j_n(\ii \,s) \,h^{(1)}_n(\ii \,s),
\end{align*}
where $j_n$ denotes the spherical Bessel functions and $h_n^{(1)}$ is the spherical Hankel function of the first kind.
Due to this relation, solving \eqref{eq:heat_eq_bem} becomes a purely one dimensional problem,
i.e., we can write $\lambda(x,t)=\widetilde{\lambda}(t) Y^m_n(x)$ and
the solution can be easily computed to very high accuracy. For our experiments we chose $n=2$.

We compare the 3-stage and 5-stage Radau~IIA methods (see~\cite{hairer_wanner_2} for their definitions).
These methods have stage order $3$ and $5$ respectively and
both are stiffly accurate and satisfy Assumption~\ref{ass:1.3}. We therefore expect convergence rates
for the density $\lambda$ of order $3.5$ and  $5.5$.
Since the exact solution is not available, we compute the difference to an approximation with step-size $k/4$ and
use this as an approximation to the discretization error. The results can be seen in Figure~\ref{fig:convergence_heat}.
We observe that the results are in good agreement with our predictions.

\begin{figure}[htb]
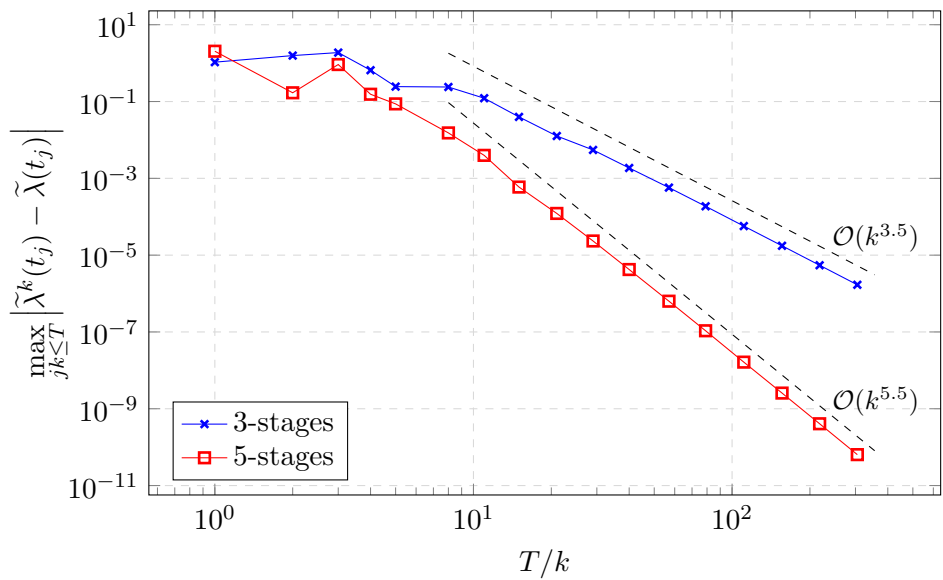

  \centering
  \includeTikzOrEps{convergence_heat}
  \caption{Convergence for the density $\widetilde{\lambda}$ for the heat
    conduction problem (cf. Section~\ref{sect:heat}), comparing Radau~IIA methods.}
  \label{fig:convergence_heat}
\end{figure}

\appendix
\section{Interpolation of Sobolev spaces}
\label{appendix:interpolation_of_sobolev}
In this  appendix we prove that in Lipschitz domains and for certain parameters $\mu$,
the spaces $[L^2(\Omega),H^1_0(\Omega)]_{\mu}$
contain functions with non-vanishing boundary conditions.
Such estimates are the main ingredient when determining the convergence rate of Runge-Kutta methods
using the theory developed in the previous sections. For $\mu<1/2$, it is well known
that the fractional Sobolev spaces $H^\mu(\Omega)=[L^2(\Omega),H^1(\Omega)]_{\mu,2}$ and $\widetilde{H}^{\mu}(\Omega)=[L^2(\Omega),H^1_0(\Omega)]_{\mu,2}$
coincide (see e.g. \cite[Theorem 3.40]{mclean} together with the results in \cite[Appendix B]{mclean}
to identify the Sobolev spaces with the interpolation space).
We prove that when interpolating using the index $\infty$, the critical value $\mu=1/2$ is also
admissible, provided that some further regularity is provided.

In order to state our result, we need additional notation, notably
we define interpolation spaces for $q\in [1,\infty)$ as 
\begin{align}
  \label{eq:definition_interpolation_norm_besov}
  \norm{u}_{[\mathcal{X}_0,\mathcal{X}_1]_{\mu,q}}^q
  &:=\int_{0}^{\infty}{t^{-\mu} \left[\inf_{v \in \mathcal{X}_1} \norm{u-v}_{\mathcal{X}_0} + t \norm{v}_{\mathcal{X}_1}\right]^q \frac{dt}{t}},
\end{align}
and introduce the following Besov spaces:
\begin{align}
  \label{def:besov}
  B^\mu_{2,q}(\Omega):=\left[L^2(\Omega), H^1(\Omega)\right]_{\mu,q}
  \qquad \text{and}\qquad
  \widetilde{B}^\mu_{2,q}(\Omega):=\left[L^2(\Omega), H^1_0(\Omega)\right]_{\mu,q}.
\end{align}
For $t >0$, we define the strip
\begin{align}
  \Omega_t:=\big\{x \in \Omega: \operatorname{dist}(x,\partial \Omega) < t\big\},
\end{align}
which will play an important role in the following proofs.

\begin{theorem}
  \label{thm:interpolation_spaces_coincide}
  Let $\Omega$ be either a bounded Lipschitz domain or the complement of a bounded Lipschitz domain.
  Fix $\mu \in (0,1/2]$. Then
  \begin{align*}
    B^\mu_{2,1}(\Omega)\subseteq  \widetilde{B}^\mu_{2,\infty}(\Omega)
  \end{align*}
  with equivalent norms. The implied constant depends on $\Omega$ and $\mu$.
\end{theorem}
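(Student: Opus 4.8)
The plan is to argue entirely with the $K$-functionals
\[
K(t,u):=\inf_{v\in H^1(\Omega)}\big(\|u-v\|_{L^2(\Omega)}+t\,\|v\|_{H^1(\Omega)}\big),\qquad
\widetilde K(t,u):=\inf_{v\in H^1_0(\Omega)}\big(\|u-v\|_{L^2(\Omega)}+t\,\|v\|_{H^1(\Omega)}\big),
\]
so that, up to a low-frequency tail that converges because $\mu>0$, one has $\|u\|_{B^\mu_{2,1}(\Omega)}\asymp\sum_{j\ge0}2^{j\mu}K(2^{-j},u)$ and $\|u\|_{\widetilde B^\mu_{2,\infty}(\Omega)}\asymp\sup_{N\ge0}2^{N\mu}\widetilde K(2^{-N},u)$. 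Because $H^1_0(\Omega)\subseteq H^1(\Omega)$ forces $\widetilde K\ge K$, the inclusion with the reverse norm bound is trivial, and it remains only to establish $\sup_{N}2^{N\mu}\widetilde K(2^{-N},u)\lesssim\|u\|_{B^\mu_{2,1}(\Omega)}$.

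The geometric heart of the matter is a boundary-strip estimate: for every $w\in H^1(\Omega)$ and every $s>0$,
\[
\|w\|_{L^2(\Omega_s)}^2\lesssim s\,\|w\|_{L^2(\Omega)}\,\|w\|_{H^1(\Omega)}.
\]
I would prove this by a partition of unity subordinate to Lipschitz charts flattening $\Gamma$, reducing to a hypograph in which, integrating $|w|^2$ along the transversal direction and using the fundamental theorem of calculus together with Cauchy--Schwarz, one gains exactly the factor $s$ from the thickness of the strip. I expect this localization on a general Lipschitz domain, together with the bookkeeping of the distance function $x\mapsto\operatorname{dist}(x,\Gamma)$, to be the main technical obstacle; the exterior case is handled identically, since $\Gamma$ is bounded and only the behaviour near $\Gamma$ matters.

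With this tool I would construct the $H^1_0$-competitor by a single cutoff at scale $s\sim t$. Fix $N$, set $t=2^{-N}$, choose near-minimizers $v_j\in H^1(\Omega)$ of $K(2^{-j},u)$, and form the increments $w_j:=v_j-v_{j-1}$ (with $v_{-1}:=0$); subadditivity of $K$ yields $\|w_j\|_{L^2(\Omega)}\lesssim K(2^{-j},u)$ and $\|w_j\|_{H^1(\Omega)}\lesssim 2^{j}K(2^{-j},u)$ for all $j\ge0$. Taking $\chi_t=\Theta(\operatorname{dist}(\cdot,\Gamma)/t)$ with a fixed profile $\Theta$ equal to $0$ on $[0,1]$ and $1$ on $[2,\infty)$, the function $\chi_t v_N$ lies in $H^1_0(\Omega)$, satisfies $|\nabla\chi_t|\lesssim t^{-1}$, and both $(1-\chi_t)v_N$ and $(\nabla\chi_t)v_N$ are supported in $\Omega_{2t}$. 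The product rule then gives
\[
\widetilde K(t,u)\le\|u-\chi_t v_N\|_{L^2(\Omega)}+t\,\|\chi_t v_N\|_{H^1(\Omega)}\lesssim K(t,u)+\|v_N\|_{L^2(\Omega_{2t})}.
\]

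Finally I would control the residual strip term by telescoping $v_N=\sum_{j=0}^N w_j$ and applying the strip estimate to each increment with $s=2t$, obtaining $\|w_j\|_{L^2(\Omega_{2t})}\lesssim 2^{(j-N)/2}K(2^{-j},u)$. Multiplying by $2^{N\mu}$ and writing the weight as $2^{N\mu}2^{(j-N)/2}=2^{(N-j)(\mu-1/2)}2^{j\mu}$, the prefactor $2^{(N-j)(\mu-1/2)}\le1$ precisely because $j\le N$ and $\mu\le1/2$, so the sum telescopes into $\sum_{j\ge0}2^{j\mu}K(2^{-j},u)\lesssim\|u\|_{B^\mu_{2,1}(\Omega)}$, uniformly in $N$. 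This is exactly where both hypotheses enter: the index $q=1$ on the source side makes the dyadic sum converge, and the critical bound $\mu\le1/2$ keeps the geometric weights from growing (for $\mu>1/2$ the factors $2^{(N-j)(\mu-1/2)}$ blow up and the argument breaks, reflecting the genuine discrepancy between $H^1$ and $H^1_0$ boundary behaviour). Taking the supremum over $N$ and passing from dyadic to continuous $t$ via monotonicity and the doubling property of $K$ then completes the proof.
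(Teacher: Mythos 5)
Your proposal is correct, and it reaches the conclusion by a route that differs from the paper's in two substantive ways, even though the central construction (multiplying a near-minimizer $v(t)$ of the $H^1$ $K$-functional by a cutoff $\chi_t$ vanishing on $\Omega_t$ and equal to $1$ off $\Omega_{2t}$) is identical. First, the paper reduces to the endpoint $\mu=1/2$ and obtains $\mu<1/2$ by interpolation, whereas you treat all $\mu\in(0,1/2]$ at once; the restriction $\mu\le 1/2$ enters your argument transparently through the sign of the exponent in $2^{(N-j)(\mu-1/2)}$. Second, and more importantly, the paper controls the residual strip term $\|v\|_{L^2(\Omega_{2t})}$ by invoking two external results: the Besov-scale strip estimate $\|v\|_{L^2(\Omega_t)}\lesssim t^{1/2}\|v\|_{B^{1/2}_{2,1}(\Omega)}$ of Li--Mayboroda--Wen--Zhang and a lemma of Bramble--Scott guaranteeing that the near-minimizer satisfies $\|v(t)\|_{B^{1/2}_{2,1}}\lesssim\|u\|_{B^{1/2}_{2,1}}$. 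You replace both citations by the elementary multiplicative strip bound $\|w\|_{L^2(\Omega_s)}^2\lesssim s\,\|w\|_{L^2(\Omega)}\|w\|_{H^1(\Omega)}$ for $w\in H^1(\Omega)$ (provable by flattening and the fundamental theorem of calculus, as you indicate) applied to the telescoped dyadic increments $w_j=v_j-v_{j-1}$, whose $L^2$ and $H^1$ norms are controlled by $K(2^{-j},u)$ and $2^jK(2^{-j},u)$ respectively; summing the resulting geometric weights is exactly where the index $q=1$ on the source space is consumed. The net effect is a self-contained proof at the cost of redoing, in effect, the content of the cited strip lemma; the paper's version is shorter on the page but leans on the literature. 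Both arguments are sound, and your bookkeeping (doubling of $K$, the $j=0$ term, the passage from dyadic to continuous $t$) checks out.
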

\begin{proof}
  For simplicity, assume that $\Omega$ is bounded. We focus on the
  case $\mu=1/2$, the general one follows by an interpolation argument.  
  Consider $u \in B^\mu_{2,1}(\Omega)$.
  For fixed $t>0$, we select $v(t) \in H^1(\Omega)$ as function almost realizing the infimum appearing in the interpolation norm, i.e.,
  \begin{align*}
    \norm{u-v(t)}_{L^2(\Omega)} + t \norm{v(t)}_{H^1(\Omega)} \leq 2 \inf_{w \in H^1(\Omega)} \left(\norm{u-w}_{L^2(\Omega)} + t \norm{w}_{H^1(\Omega)}\right).
  \end{align*}
  By~\cite[Lemma]{bramble_scott}, the following estimate holds for all $t\geq 0$:
  \begin{align*}
    \norm{v(t)}_{B^\mu_{2,1}(\Omega)}\leq 3 \norm{u}_{B^\mu_{2,1}(\Omega)}.
  \end{align*}

  We consider a smooth cutoff function $\chi_t: \Omega \to [0,1]$ satisfying:
  \begin{align}
    \label{eq:properties_of_cutoff}
    \chi_t(x) &\equiv 0 \; \text{ on $\Omega_t$},  \qquad
    \chi_t(x) \equiv 1 \; \text{ on $\Omega \setminus \Omega_{2t}$} \qquad \text{ and } \quad \norm{\nabla \chi_t}_{L^{\infty}} \lesssim t^{-1}.
  \end{align}  
  
  We then define $\widetilde{v}(t):=\chi_t v(t) \in H_0^1(\Omega)$ and calculate:
  \begin{align*}
    \norm{u-\widetilde{v}(t)}_{L^2(\Omega)}
    &\leq \norm{u-{v}(t)}_{L^2(\Omega)} + \norm{(1-\chi_t)v}_{L^2(\Omega_{2t})}\\
    &\lesssim \norm{u-{v}(t)}_{L^2(\Omega)} + t^{1/2}\norm{v}_{B^{1/2}_{2,1}(\Omega)}
  \end{align*}
  where we used the fact that $1-\chi_t$ vanishes on $\Omega \setminus \Omega_{2t}$
  and
  applied~\cite[Lemma 2.1]{LMWZ10} to estimate the $L^2$-norm there.

  Similarly,
  \begin{align*}
    t\,\norm{\widetilde{v}(t)}_{H^1{\Omega}}
    &\lesssim t\, \norm{v}_{H^1(\Omega)} + t\, \norm{(\nabla\chi_t) v}_{L^2(\Omega)}  
    \lesssim t\, \norm{v}_{H^1(\Omega)} +  \norm{v}_{L^2(\Omega_{2t})} \\
    &\lesssim t \norm{v}_{H^1(\Omega)} + t^{1/2} \norm{v}_{B^{1/2}_{2,1}(\Omega)}.
  \end{align*}

  For the interpolation norm, we therefore get:
  \begin{align*}
    \norm{u}_{\widetilde{B}^{1/2}_{2,\infty}(\Omega)} 
    &\lesssim \operatorname{ess\,sup}_{t>0}\!\Big[ t^{-1/2}
      \big(\!\norm{u-v(t)}_{L^2(\Omega)} \!+\! t \norm{v(t)}_{H^1(\Omega)}
      + t^{1/2} \norm{v}_{B^{1/2}_{2,1}(\Omega)}
      \big)\Big] \\
    &\lesssim \norm{v}_{B^{1/2}_{2,1}(\Omega)}
      +\norm{u}_{\left[L^2(\Omega), H^1(\Omega)\right]_{\mu,\infty}}
      \lesssim \norm{u}_{B^{1/2}_{2,1}(\Omega)}
      +\norm{u}_{B^{1/2}_{2,\infty}(\Omega)} \\
    &\lesssim \norm{u}_{B^{1/2}_{2,1}(\Omega)}.
  \end{align*}

  If $\Omega$ is the exterior of a bounded Lipschitz domain, the proof applies almost verbatim as all important steps can be localized to a neighborhood of the boundary.
\end{proof}
\begin{remark}
  The use of the second parameter $\infty$ in the interpolation norm is crucial for Theorem~\ref{thm:interpolation_spaces_coincide}
  to hold in the case $\mu=1/2$. For $L^2$-based interpolation it is well known that the interpolation space
  $\left[L^2(\Omega), H^1_0(\Omega)\right]_{1/2,2}$ is the Lions-Magenes space $H^{1/2}_{00}(\Omega)$, see \cite[Chapter 33]{tartar07},
  which is distinct from $H^{1/2}(\Omega)$.
\end{remark}

When considering the Neumann problem in Section~\ref{sec:9},
we need to create a lifting to a vector field with a given normal jump in $L^2$.
In general, such liftings do not have $B^{1/2}_{2,1}$-regularity. Thus Theorem~\ref{thm:interpolation_spaces_coincide} is not applicable. Instead, we have a modified construction.

\begin{lemma}
  \label{lemma:integral_strip_using_mf}
  Let $\Omega$ be a bounded Lipschitz domain or the exterior of a bounded Lipschitz domain
  with boundary $\Gamma:=\partial \Omega$.
  For $C > 0$, $c>0$ fixed with $c$ sufficiently small, define the non-tangential maximal function
  \begin{align*}
    N(\nabla u)(x)
    &:=\!\!\sup_{y \in \Theta(x)}{\abs{\nabla u(y)}}, \; \text{where } \;
      \Theta(x):=\{y \in \Omega: \abs{x-y}\leq \max(c,C\operatorname{dist}(y,\Gamma))\}.
  \end{align*}
  Let $u \in H^1(\Omega)$ be harmonic and satisfy $N(\nabla u) \in L^2(\Gamma)$.

  Then for $t>0$ we can bound the $L^2$ norm on strips $\Omega_t$ by
  \begin{align}
    \norm{u}_{L^2(\Omega_t)}
    &\lesssim t^{1/2} \norm{N(\nabla u)}_{L^2(\Gamma)}.
  \end{align}
\end{lemma}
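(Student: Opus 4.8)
The plan is to trade the volume integral over the thin collar $\Omega_t$ for a boundary integral over $\Gamma$ by integrating $\nabla u$ along short transversal paths and then applying a co-area type change of variables. First I would localize: cover $\Gamma$ by finitely many patches on each of which $\Omega$ is, after a rotation, the region lying above a Lipschitz graph. In such coordinates the strip $\Omega_t$ is comparable to a ``graph collar'' of vertical width $\simeq t$, and the volume element is comparable to $d\sigma(x^*)\,d\delta$, where $x^*$ ranges over $\Gamma$ and $\delta \simeq \operatorname{dist}(\cdot,\Gamma)$ is the depth; all comparison constants are controlled by the Lipschitz character of $\Gamma$. The pointwise input is a fundamental-theorem-of-calculus bound: for $x \in \Omega_t$ with $\delta(x)=\operatorname{dist}(x,\Gamma)<t$, I choose a base point $x^* \in \Gamma$ with $|x-x^*|\simeq\delta(x)$ and join $x^*$ to $x$ by a segment that stays inside the approach region $\Theta(x^*)$ — this is exactly what the opening constant $C$ in the definition of $\Theta$ guarantees. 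Since every point of the segment lies in $\Theta(x^*)$, one has $|\nabla u|\le N(\nabla u)(x^*)$ along it, so
\[
|u(x)| \le |u(x^*)| + \int_{x^*}^{x} |\nabla u| \lesssim |u(x^*)| + \delta(x)\,N(\nabla u)(x^*).
\]

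Squaring, integrating over $\Omega_t$, and using the change of variables above gives
\[
\|u\|_{L^2(\Omega_t)}^2 \lesssim \int_\Gamma \int_0^{Ct} \big( |u(x^*)|^2 + \delta^2 N(\nabla u)(x^*)^2 \big)\, d\delta\, d\sigma(x^*) \lesssim t\,\|u\|_{L^2(\Gamma)}^2 + t^3\, \|N(\nabla u)\|_{L^2(\Gamma)}^2 .
\]
Thus, modulo the trace term, the strip estimate is essentially free. The hard part will be controlling $\|u\|_{L^2(\Gamma)}$, which is in fact the \emph{dominant} contribution here (it carries the stated power $t^{1/2}$, while the explicit gradient term only produces $t^{3/2}$). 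This is also the only place where harmonicity and the global hypotheses on $u$ must be used: for a nonzero constant on a bounded domain the asserted inequality fails, so the estimate genuinely requires the decay/normalization of $u$. Accordingly, the crux is to establish
\[
\|u\|_{L^2(\Gamma)} \lesssim \|N(\nabla u)\|_{L^2(\Gamma)} ,
\]
after which the previous display, keeping the dominant power of $t$ for the relevant bounded range of $t$, yields $\|u\|_{L^2(\Omega_t)} \lesssim t^{1/2}\|N(\nabla u)\|_{L^2(\Gamma)}$.

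To prove this trace bound in the case of interest, where $\Omega$ is the exterior of a bounded Lipschitz domain and $u \in H^1(\Omega)$ is harmonic, I would recover $u$ on $\Gamma$ from its gradient by integrating along outward non-tangential rays to infinity, using that an $H^1$ harmonic function in an exterior domain decays. The near-boundary portion of each ray (of fixed length $\simeq c$) lies in the approach region and is controlled directly by $N(\nabla u)(x^*)$, while the far portion is handled by interior gradient estimates for harmonic functions together with the decay, so that the whole integral is dominated by $N(\nabla u)$ in the $L^2(\Gamma)$ sense. I expect the two genuinely technical points to be: (i) the bi-Lipschitz flattening with a Jacobian controlled by the Lipschitz constant, so that the co-area step is legitimate on a merely Lipschitz boundary; and (ii) the verification that the transversal segments and outward rays chosen above remain inside the non-tangential approach region $\Theta$, which is where the precise geometry encoded in $\Theta(x)=\{y:|x-y|\le\max(c,C\operatorname{dist}(y,\Gamma))\}$ is used. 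With these in hand the estimate follows by assembling the local pieces with a finite partition of unity subordinate to the boundary patches.
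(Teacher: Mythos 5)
There is a mismatch between what you are proving and what the paper actually proves, and it matters. The displayed conclusion of the lemma contains a typo: the left-hand side should be $\norm{\nabla u}_{L^2(\Omega_t)}$, not $\norm{u}_{L^2(\Omega_t)}$. This is visible both in the paper's own proof (whose integrand is $\abs{\nabla u(x,\varphi(x)+y\bs r)}^2$ from the first line on) and in the lemma's only application, Theorem~\ref{thm:lifting_of_neumann}, where it is invoked to bound $\norm{(1-\chi_t)\bs w}_{L^2(\Omega)}$ with $\bs w=\nabla u$. You in fact detected the tell yourself: as you observe, the literal statement fails for a nonzero constant on a bounded domain, since then $N(\nabla u)\equiv 0$. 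For the intended statement the argument is essentially trivial and needs neither the fundamental theorem of calculus nor harmonicity: in a graph chart, every point of the transversal fiber $\{(x,\varphi(x)+y\bs r): 0<y<y_0(x)\}$ over the boundary point $(x,\varphi(x))$ lies in $\Theta(x,\varphi(x))$ once $C$ is large enough relative to the Lipschitz constant, so $\abs{\nabla u}\le N(\nabla u)(x,\varphi(x))$ pointwise on the fiber; since $y_0(x)\lesssim t$, Fubini immediately gives $\norm{\nabla u}_{L^2(\Omega_t\cap\mathcal O)}^2\lesssim t\,\norm{N(\nabla u)}_{L^2(\Gamma)}^2$. Your localization and your point (ii) about the fibers lying in the approach region are exactly the content of the paper's proof; everything else in your argument is superstructure built to handle $u$ instead of $\nabla u$.

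That superstructure contains a genuine gap even on its own terms. Your entire estimate reduces to the trace inequality $\norm{u}_{L^2(\Gamma)}\lesssim\norm{N(\nabla u)}_{L^2(\Gamma)}$, which (as you note) is false for bounded domains, so the lemma as you read it cannot be proved in the stated generality. For exterior domains your sketch integrates $\abs{\nabla u}$ along rays to infinity, but the required absolute convergence of $\int_0^\infty\abs{\nabla u(x^*+s\nu)}\,ds$ does not follow from $u\in H^1(\Omega)$ and $N(\nabla u)\in L^2(\Gamma)$: interior estimates control averages, not a pointwise rate along a fixed ray, the far-field contribution is not dominated by $N(\nabla u)(x^*)$ but at best by a global quantity such as $\norm{\nabla u}_{L^2(\Omega)}$ (which is not on the right-hand side of the lemma), and in $d=2$ the decay of $H^1$ harmonic functions is too weak for this to work at all. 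The fix is simply to prove the statement for $\nabla u$; the dominant term in your decomposition then disappears and the remaining computation is the paper's.
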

\begin{proof}
  We focus on a single chart in the parametrization of (a vicinity of ) $\Gamma$.
  Let $\mathcal{O} \subseteq \Omega$ and $\mathcal{D} \subseteq \R^{d-1}$ be open,
  $\bs r \in \R^n$,  $\varphi: \mathcal{D}\to \R$,
  $y_0: \mathcal{D} \to \R$
  such that we can write
  $$
  \Omega_t \cap \mathcal{O}
  = \big\{ (x, \varphi(x)+ y \bs r): x \in \mathcal{D}, \text{ and }y \in (0,y_0(x)) \big\}.
  $$
  By the Lipschitz assumption, we note that $y_0(x) \lesssim C t$.
  Following the considerations in \cite[Appendix A.4]{CGLS12}, one can see
  that as long as $C$ in the definition of $\Theta$ is taken sufficiently large,
  we have that for all $x \in \mathcal{D}$
  $$
  \{(x, \varphi(x)+ \tau \bs r): y \in (0,y_0(x)) \}
  \subseteq \Theta(x, \varphi(x)).
  $$
 
  We calculate
  \begin{align}
    \norm{u}_{L^2(\Omega_t \cap \mathcal{O})}^2
    &=\int_{x \in \mathcal{D}}{
      \int_{y=0}^{y_0(x)}{\abs{\nabla u(x,\varphi(x)+y \bs r)}^2 \,dy}\,dx} \\
    &\lesssim \int_{x \in \mathcal{D}}{\int_{y=0}^{y_0(x)}
      {\big(N(\nabla u)(x)\big)^2 \,dy}\,dx} \\
    &\leq t\int_{x \in \mathcal{D}}{{\big(N(\nabla u)(x)\big)^2 \,dx} } 
      \leq  t \norm{N(\nabla u)}_{L^2(\Gamma)}^2.
  \end{align}
  Repeating the same calculation for all boxes needed to parametrize
  a neighborhood of $\Gamma$ then concludes the proof.
\end{proof}

\begin{theorem}
  \label{thm:lifting_of_neumann}
  Let $\Omega$ be a bounded Lipschitz domain or the exterior of a bounded Lipschitz domain
  and write
  $\mathbf H_0(\mathrm{div},\Omega):=\{ \bs w \in \mathbf H(\mathrm{div},\Omega): \gamma^-_{\nu} \bs w =0\}$.
  
  For every $g \in L^2(\Omega)$, there exists a function $\bs w \in \mathbf H(\mathrm{div},\Omega)$
  such that
  \begin{align}
    \label{eq:lifting_of_neumann_est}
    \gamma_{\nu} \bs w &=g \qquad \text{and} \qquad
    \norm{\bs w}_{
    [L^2(\Omega), \mathbf H_0(\mathrm{div},\Omega)]_{1/2,\infty}}
    \lesssim \norm{g}_{L^2(\Omega)}
  \end{align}
  with a constant depending only on $\Omega$.

\end{theorem}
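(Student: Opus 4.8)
The plan is to realize the lifting as the gradient of a harmonic function, so that Lemma~\ref{lemma:integral_strip_using_mf} can be applied directly. First I would solve the homogeneous Neumann problem: find $u$ \emph{harmonic} in $\Omega$ (decaying at infinity in the exterior case) with $\gamma_\nu^- \nabla u = g$ on $\Gamma$, and set $\bs w := \nabla u$. Then $\nabla\cdot \bs w = \laplace u = 0 \in L^2(\Omega)$, so $\bs w \in \mathbf H(\mathrm{div},\Omega)$; its $\mathbf H(\mathrm{div})$-normal trace agrees with the non-tangential value $\partial_\nu u = g$; and, decisively, $\bs w$ is the gradient of a harmonic function, which is exactly the hypothesis under which Lemma~\ref{lemma:integral_strip_using_mf} controls the $L^2$-mass on the strips $\Omega_t$. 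The full estimate~\eqref{eq:lifting_of_neumann_est} will then reduce to bounding the non-tangential maximal function, $\norm{N(\nabla u)}_{L^2(\Gamma)}\lesssim \norm{g}_{L^2(\Gamma)}$.

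For the construction itself I would invoke the classical $L^2$-solvability of the Neumann problem on Lipschitz domains, e.g.\ via single-layer potentials, where the boundary operator $\pm\tfrac12 I + K^t$ is invertible on $L^2(\Gamma)$; this yields a harmonic $u$ with $\partial_\nu u = g$, with $N(\nabla u)\in L^2(\Gamma)$ and the sharp bound $\norm{N(\nabla u)}_{L^2(\Gamma)}\lesssim \norm{g}_{L^2(\Gamma)}$ (which also gives $\norm{\nabla u}_{L^2(\Omega)}\lesssim\norm{g}_{L^2(\Gamma)}$). In the exterior case no compatibility condition is required; for a bounded $\Omega$ I would first subtract the mean $\bar g := |\Gamma|^{-1}\int_\Gamma g$, solve the harmonic Neumann problem for the zero-mean part $g-\bar g$, and lift the constant $\bar g$ by an explicit smooth field whose contribution to every norm below is of strictly lower order.

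The interpolation estimate I would obtain by bounding the $K$-functional $K(t,\bs w):=\inf_{\bs v\in \mathbf H_0(\mathrm{div},\Omega)}\big(\norm{\bs w-\bs v}_{L^2(\Omega)}+t\norm{\bs v}_{\mathbf H(\mathrm{div},\Omega)}\big)$ appearing in~\eqref{eq:definition_interpolation_norm}. Fix $t>0$ and take the cutoff $\chi_t$ from the proof of Theorem~\ref{thm:interpolation_spaces_coincide}: $\chi_t\equiv 0$ on $\Omega_t$, $\chi_t\equiv 1$ on $\Omega\setminus\Omega_{2t}$, and $\norm{\nabla\chi_t}_{L^\infty}\lesssim t^{-1}$. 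Since $\chi_t$ vanishes near $\Gamma$, the competitor $\bs v:=\chi_t\bs w$ lies in $\mathbf H_0(\mathrm{div},\Omega)$, and $\nabla\cdot\bs w=0$ gives $\nabla\cdot(\chi_t\bs w)=\nabla\chi_t\cdot\bs w$, supported in $\Omega_{2t}$. Hence
\[
K(t,\bs w)\le \norm{(1-\chi_t)\bs w}_{L^2(\Omega)}+t\norm{\chi_t\bs w}_{\mathbf H(\mathrm{div},\Omega)}
\lesssim \norm{\bs w}_{L^2(\Omega_{2t})}+t\,\norm{\bs w}_{L^2(\Omega)}.
\]
By Lemma~\ref{lemma:integral_strip_using_mf}, $\norm{\bs w}_{L^2(\Omega_{2t})}=\norm{\nabla u}_{L^2(\Omega_{2t})}\lesssim t^{1/2}\norm{N(\nabla u)}_{L^2(\Gamma)}\lesssim t^{1/2}\norm{g}_{L^2(\Gamma)}$, so $t^{-1/2}K(t,\bs w)\lesssim \norm{g}_{L^2(\Gamma)}+t^{1/2}\norm{\bs w}_{L^2(\Omega)}$; for bounded $t$ this is $\lesssim\norm{g}_{L^2(\Gamma)}$, while for large $t$ the choice $\bs v=0$ gives $t^{-1/2}K(t,\bs w)\le t^{-1/2}\norm{\bs w}_{L^2(\Omega)}$, which is likewise controlled. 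Taking the supremum over $t>0$ in~\eqref{eq:definition_interpolation_norm} yields~\eqref{eq:lifting_of_neumann_est}.

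The main obstacle is not the interpolation bookkeeping but the harmonic-analysis input in the first step: producing a harmonic $u$ with $\gamma_\nu\nabla u=g\in L^2(\Gamma)$ together with the non-tangential maximal-function bound $\norm{N(\nabla u)}_{L^2(\Gamma)}\lesssim\norm{g}_{L^2(\Gamma)}$ on a merely Lipschitz boundary. This is precisely the regularity that feeds Lemma~\ref{lemma:integral_strip_using_mf}, and it is where Lipschitz layer-potential theory is essential; the exterior-domain behaviour at infinity and the bounded-domain compatibility condition are the secondary technical points to verify.
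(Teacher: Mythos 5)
Your proposal is correct and follows essentially the same route as the paper: lift $g$ to $\bs w=\nabla u$ with $u$ solving the harmonic Neumann problem, invoke the Jerison--Kenig non-tangential maximal function bound $\norm{N(\nabla u)}_{L^2(\Gamma)}\lesssim\norm{g}_{L^2(\Gamma)}$, estimate the $K$-functional with the cutoff $\chi_t$ and Lemma~\ref{lemma:integral_strip_using_mf}, and handle the compatibility condition on a bounded domain by splitting off the mean of $g$. Your explicit treatment of the large-$t$ regime (taking $\bs v=0$) is a small but welcome tightening of the bookkeeping in the paper's final display.
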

\begin{proof}
  For simplicity, assume that $\Omega$ is bounded. By performing an appropriate
  cutoff away from $\partial \Omega$, all arguments can be localized.
  First, consider the case $\int_{\Gamma} g =0$. Let $u$ solve the Neumann problem
  $$
  \laplace u =0 \text{ in $\Omega$ } \qquad \partial_n u = g \text{ on $\partial \Omega$}.
  $$

  In addition to $u \in H^1(\Omega)$, by~\cite{jerison_kenig}(see also
  \cite[Theorem A.6]{CGLS12}),
  such harmonic functions $u$ also satisfy
  $$\norm{N(\nabla u)}_{L^2(\Gamma)} \leq \norm{g}_{L^2(\Gamma)}.$$

  For fixed $t>0$ we again pick a smooth cutoff function $\chi_t$ satisfying~(\ref{eq:properties_of_cutoff}).
  We set $\bs w:=\nabla u$ and calculate
  using Lemma~\ref{lemma:integral_strip_using_mf}:
  \begin{align*}
    \norm{\bs w}_{
    [L^2(\Omega), \mathbf H_0(\mathrm{div},\Omega)]_{1/2,\infty}}
    &\lesssim
      \operatorname{esssup}_{t \geq 0}\Big(
      t^{-1/2} \big[ \norm{(1-\chi_t) \bs w }_{L^2(\Omega)} 
      + t \norm{\chi_t \bs w }_{\mathbf H(\mathrm{div},\Omega)} \big]\Big) \\
    &\lesssim
      \operatorname{esssup}_{t \geq 0}\Big(
      \norm{N(\nabla u)}_{L^2(\Gamma)}
      + t^{1/2}\norm{ \nabla u}_{L^2(\Gamma)}
      \Big) \\
      &\leq \norm{g}_{L^2(\Gamma)}.
  \end{align*}
  In the case $\int_{\Gamma} g \neq 0$, we
  construct $u$ as before using the Neumann data $g-\int_{\Gamma} g$
  and then set
  $$
  \bs w:=\nabla u + \int_{\Gamma} g.
  $$
  It is easy to see using a similar cutoff technique, that
  the interpolation norm in~(\ref{eq:lifting_of_neumann_est})
  is bounded for constant functions.  
\end{proof}

\textbf{Acknowledgments:} Work of A.R. was financially supported by the Austrian Science Fund (FWF) through the 
doctoral school ``Dissipation and Dispersion in Nonlinear PDEs'' (project W1245)
and the research program ``Taming complexity in partial differential systems'' (grant SFB F65).
FJS is partially supported by NSF-DMS grant 1818867. Part of this work was developed while FJS was a Visiting Professor at TU Wien.

\bibliographystyle{halpha}
\bibliography{literature}
\end{document}